\documentclass[11pt,reqno]{amsart}

\usepackage{graphicx}

\usepackage{appendix}

\usepackage{mleftright}

\usepackage{wasysym}


\DeclareMathAlphabet{\mathpzc}{OT1}{pzc}{m}{it}
\DeclareMathAlphabet{\mathantt}{OT1}{antt}{li}{it}

\usepackage{comment}
\usepackage{enumerate}
\usepackage{esvect}
\usepackage{bm}
\usepackage{amsmath}
\usepackage{amssymb}
\usepackage{mathtools}
\newcommand{\defeq}{\vcentcolon=}
\newcommand{\eqdef}{=\vcentcolon}

\usepackage{mathrsfs}
\usepackage{color}

\usepackage[normalem]{ulem}

\usepackage{url}

\newtheorem{theorem}{Theorem}[section]
\newtheorem{prop}[theorem]{Proposition}
\newtheorem{cor}[theorem]{Corollary}
\newtheorem{lemma}[theorem]{Lemma}
\newtheorem{remark}[theorem]{Remark}
\newtheorem{definition}[theorem]{Definition}

\usepackage{bbm}

\DeclareSymbolFont{bbold}{U}{bbold}{m}{n}
\DeclareSymbolFontAlphabet{\mathbbold}{bbold}

\usepackage{stmaryrd}

\usepackage{hyperref}
\hypersetup{hidelinks,colorlinks}

\hypersetup{
    linkcolor=red,
    citecolor=blue,
    urlcolor=black,
 }

\usepackage{dsfont}

\usepackage{mleftright} 

\newcommand{\CC}{\mathbb{C}}
\newcommand{\RR}{\mathbb{R}}
\newcommand{\NN}{\mathbb{N}}

\newcommand{\ZZ}{\mathbb{Z}}

\newcommand{\ksymbol}{}

\title{Generalized polygonal number representations}

\author{Glenn Bruda}
\address{Department of Mathematics, University of Florida, Gainesville, FL 32611}
\email{glenn.bruda@ufl.edu}

\allowdisplaybreaks

\begin{document}

\begin{abstract}
    Let $r_n^{\ksymbol{k}}(N)$ be the number of representations of $N$ as the sum of $n$ generalized $k$-gonal numbers and $r_n^{\square}(N)$ be the number of representations of $N$ as the sum of $n$ squares. By modifying the Heath-Brown circle method, we prove a closed-form asymptotic relation between $r_n^{\ksymbol{k}}(N)$ and $r_n^{\square}(8(k-2)N+n(k-4)^2)$ for any $k\geq3$ and any $n\geq4$. Consequently, we determine the asymptotics of $\sum_{N\leq x}r_4^{\ksymbol{k}}(N)^2$ and, via a result of Bringmann, Jang, Kane, and Tse, prove a similar closed-form asymptotic relation between the number $r_{4,+}^{\ksymbol{k}}(N)$ of representations of $N$ as the sum of four ordinary $k$-gonal numbers and $r_4^{\square}(8(k-2)N+n(k-4)^2)$. We also show that if $4\mid k$, any strictly increasing infinite subsequence on which $r_{4,+}^{\ksymbol{k}}$ is bounded converges $2$-adically to $(k-4)^2/(4-2k)\in\ZZ_2$, supplementing a result of Meng and Sun, and if $4\nmid k$, there is no strictly increasing infinite subsequence on which $r_{4,+}^{\ksymbol{k}}$ is bounded.
\end{abstract}

\maketitle

\thispagestyle{empty}

\section{Introduction and Results}

For a nonnegative integer $m$ and $k\geq3$, the $m$\textsuperscript{th} {$k$-gonal number} $p_k(m)$ is the number of dots used to arrange a regular $k$-gon with a side length of $m$ dots. From this definition, it is straightforward to see that
\begin{align*}
    p_k(m)=\sum_{i=0}^{m-1}((k-2)i+1)=\frac{(k-2)m^2-(k-4)m}{2}.
\end{align*}
From this polynomial formula, we say that $N$ is a {generalized $k$-gonal number} if $p_k(x)=N$ for some $x\in\ZZ$. As we will frequently need to make the distinction between polygonal numbers and generalized polygonal numbers, we will often refer to $k$-gonal numbers as {ordinary} $k$-gonal numbers for emphasis and clarity.

The study of polygonal numbers as additive bases began upon Fermat conjecturing that every positive integer is the sum of $k$ ordinary $k$-gonal numbers \cite{fermat}, which is now known as the Fermat--Cauchy polygonal number theorem, following Cauchy's proof \cite{cauchy} (the cases $k=3$ and $k=4$ are known as Gauss' Eureka theorem \cite{gauss} and Lagrange's four-square theorem \cite{lagrange}, respectively, and were proven earlier). Some years after Cauchy's proof, Legendre \cite{legendre} refined the Fermat--Cauchy polygonal number theorem, showing that if $k\not\equiv0\pmod{4}$, every sufficiently large integer is the sum of four ordinary $k$-gonal numbers, and if $k\equiv 0\pmod{4}$, every sufficiently large integer is the sum of five ordinary $k$-gonal numbers and at least one of these five is $0$ or $1$. These works of Cauchy and Legendre have been wonderfully exposited and proven more efficiently in papers \cite{nathanson_cauchy} and \cite{nathanson_legendre} of Nathanson.

To this day, the study of polygonal numbers as additive bases is a rich and active area of research. In \cite{supplement_to_legendre}, Meng and Sun furthered Legendre's refinement by showing that if $k\geq8$ is divisible by four, then every integer of the form $4r^2(4^{N\phi(2r+1)}-1)/(2r+1)$ is not the sum of four ordinary $k$-gonal numbers, where $r=(k-4)/4$ and $\phi$ is Euler's phi function. In \cite{guy_AMM}, Guy provided an abundance of open problems on representing integers as sums of polygonal numbers, such as determining the minimal number $\ell_k$ of generalized $k$-gonal summands needed to obtain every positive integer, which, for $k\geq8$, is easily shown to satisfy $\ell_k\geq k-4$. Ultimately, it was shown by Sun \cite{k_8_universal} that $\ell_8=4$ and by Banerjee et al.\@ \cite{k_minus_4_universal_k_geq_10} that $\ell_k=k-4$ for all $k\geq10$. In \cite{ternary_polygonal_haensch}, Haensch and Kane handled the delicate ternary case, showing that almost every positive integer is the sum of three generalized $k$-gonal numbers if $k\not\equiv0\pmod{4}$ and $k\not\equiv 2\pmod{3}$. A very general result due to Kane and Yang \cite{finiteness_thm_kane} shows that, to determine whether every positive integer is represented by a sum of (potentially various) generalized polygonal numbers, one only needs to check a finite subset of the positive integers (the maximum of this subset is ineffective as of writing, but can be made effective upon completing the classification of ternary sums of generalized polygonal numbers that represent every positive integer). Recently, much work has been done on determining the existence of representations of integers as sums of generalized polygonal numbers with parameters with a bounded number of prime divisors; see \cite{prime_restriction_1,prime_restriction_2,prime_restriction_3}.

While the existence of representations of integers as sums of polygonal numbers has been and continues to be studied extensively, the problem of counting such representations has received relatively less attention, though interest in polygonal number representation functions has been growing recently. We define $r_n^{\ksymbol{k}}(N)$ to be the number of representations of $N$ as the sum of $n$ generalized $k$-gonal numbers, and $r_{n,+}^{\ksymbol{k}}(N)$ to be the number of representations of $N$ as the sum of $n$ ordinary $k$-gonal numbers.

In \cite{ono}, Ono, Robins, and Wahl give exact formulas for the ordinary triangular number representation function $r_{n,+}^{\ksymbol{3}}(N)$ for $n\in\{2,3,4,6,8,10,12,24\}$, recovering for the $n=4$ case the formula $r_{4,+}^{\ksymbol{3}}(N)=\sigma(2N+1)$ known previously to Legendre \cite{triangular_legendre}, where $\sigma$ is the sum-of-divisors function. In the case of four variables, Li and Wang \cite{exact_formulas_r_n_k_N} give an exact formula for $r_4^{\ksymbol{k}}(N)$ for all $k\geq3$ in terms of sums of Hurwitz class numbers. 

Since $r_n^{\ksymbol{k}}(N)$ and $r_{n,+}^{\ksymbol{k}}(N)$ count solutions to the Diophantine equation $\sum_{i=1}^{n}p_k(x_i)=N$ over $\ZZ^n$ and $\NN_0^n$, respectively, based on the available machinery, handling $r_n^{\ksymbol{k}}(N)$ is easier than handling $r_{n,+}^{\ksymbol{k}}(N)$. As remarked in \cite[pg.4]{bringmann}, we expect each $x_i$ over the set of solutions $(x_1,\dots,x_n)\in\ZZ^n$ to be asymptotically equally distributed between the negative and positive integers, giving the heuristic that $r_{n,+}^{\ksymbol{k}}(N)=2^{-n} r_{n}^{\ksymbol{k}}(N)+o(N^{n/2-1})$. This is challenging to show, but Bringmann, Jang, Kane, and Tse \cite{bringmann} recently succeeded in proving the $n=4$ case, which we will see gives some nice corollaries to our main result.

While showing a relationship between $r_n^{\ksymbol{k}}(N)$ and $r_{n,+}^{\ksymbol{k}}(N)$ is certainly difficult when $k\geq5$, the $k=3$ and $k=4$ cases are much easier. For $k=3$, since $p_3(x)=p_3(-x-1)$, we in fact have $r_{n,+}^{\ksymbol{3}}(N)=2^{-n}r_{n}^{\ksymbol{3}}(N)$ for all $n$. The $k=4$ case is not quite this simple, but we still have a nice exact relationship. Indeed, for all $n$, we have
\begin{align*}
    r_n^{\ksymbol{4}}(N)=[q^N]\left(\sum_{m\in\ZZ}q^{m^2}\right)^n=[q^N]\left(-1+2\sum_{m\geq0}q^{m^2}\right)^n=\sum_{\ell=0}^{n}{n\choose\ell}(-1)^{n-\ell}2^{\ell}r_{\ell,+}^{\ksymbol{4}}(N),
\end{align*}
where $[q^N]f(q)$ denotes the $N$\textsuperscript{th} coefficient of $f(q)$. Binomial inversion gives a similar formula for $r_{n,+}^{\ksymbol{4}}(N)$ in terms of $r_{\ell}^{\ksymbol{4}}(N)$. Since $r_{n}^{\ksymbol{4}}(N)\ll N^{n/2-1+\varepsilon}$ for all $n\geq2$, it follows that $r_{n,+}^{\ksymbol{4}}(N)=2^{-n}r_{n}^{\ksymbol{4}}(N)+O(N^{(n-3)/2+\varepsilon})$ for all $n\geq3$. These nice correspondences exist for $k=3$ and $k=4$ since $p_3$ and $p_4$ have nice symmetry between negative and positive integer arguments; however, $p_k$ does not possess such symmetry for $k\geq5$, thereby making proving $r_{n,+}^{\ksymbol{k}}(N)=2^{-n} r_{n}^{\ksymbol{k}}(N)+o(N^{n/2-1})$ difficult.

The $k=4$ case of $r_n^{\ksymbol{k}}(N)$ is, of course, just the sum of squares function, counting the number of solutions to $\sum_{i=1}^{n}x_i^2=N$ over $\ZZ^n$, and is a classical object of study. Because of the significance of this case, we give it the special notation $r_n^{\square}(N)$. The sum of $n$ squares function has been studied considerably, and exact formulas exist for several $n$. For example, Jacobi \cite{jacobi_exact_formulas} gave exact divisor-sum formulas for $r_n^{\square}(N)$ for $n\in\{2,4,6,8\}$, and exact formulas for $r_{10}^{\square}(N)$ and $r_{12}^{\square}(N)$ were given by Liouville \cite{liouville_10,liouville_12}. For $n\geq4$, we also have the asymptotic formula
\begin{align}\label{classic_asymptotic_formula_r_square}
    r_n^{\square}(N)=\frac{\pi^{n/2}}{\Gamma(n/2)}\mathfrak{S}_n^{\square}(N)N^{n/2-1}+O(N^{(n-1)/4+\varepsilon}),
\end{align}
where $\mathfrak{S}_n^{\square}(N)$ is the singular series of $r_n^{\square}(N)$. An embryonic version of this result arises as a special case of the analysis done on Waring's problem by Hardy and Littlewood \cite{waring_hardy_littlewood}, which is what prompted the development of the Hardy--Littlewood circle method. The exact version of \eqref{classic_asymptotic_formula_r_square} is these days a standard result, and can be seen, for example, by taking $F^{(0)}(\mathbf{x})=\sum_{i=1}^{n}x_i^2$ in \cite[Corollary 1]{heath_brown_circle_method}.

In \cite[pgs.305--306]{nathanson_legendre}, Nathanson remarks through quotes of Uspensky and Heaslet \cite[pg.380]{uspensky_heaslet} and Weil \cite[pg.102]{weil_book} that directly from Gauss' Eureka theorem, one can obtain the Fermat--Cauchy polygonal number theorem and Legendre's refinement thereof. In this sense, the triangular numbers are the most fundamental of the polygonal numbers. However, when moving from studying the existence of polygonal number representations to the polygonal number representation functions, it is natural to instead consider the squares as the most fundamental due to the number of clean results on the sum of squares function, motivating the desire to obtain a closed-form asymptotic relation between $r_n^{\ksymbol{k}}$ and $r_n^{\square}$. Our main result is our obtaining such a relation, provided that we have at least four variables.

\begin{theorem}\label{main_result}
    Let $n\geq4$ and $k\geq3$. For $N\in\ZZ^+$, let $X_N=8(k-2)N+n(k-4)^2$. Then the number $r_n^{\ksymbol{k}}(N)$ of representations of $N$ as the sum of $n$ generalized $k$-gonal numbers satisfies
    \begin{align*}
        r_n^{\ksymbol{k}}(N)=\frac{1+(-1)^{\lfloor n/4\rfloor}4^{n-2}T(X_N,n,k)}{2^{n-3}(k-2)^{n-1}\mathcal{N}_n(X_N,k-2)}r_n^{\square}(X_N)+O(N^{(n-1)/4+\varepsilon}),
    \end{align*}
    where $T(X_N,n,k)$ is given in closed form in Proposition \ref{2_adic_polygonal} and where
    \begin{align}\label{partial_square_singular_product}
        \mathcal{N}_n(N,L)=\left(1+(-1)^{\lfloor n/4\rfloor}f_2(N,n)\right)\prod_{\substack{p\mid L\\ p\text{~\normalfont odd}}} \left(1+p^{-n/2}f_p(N,n)\right),
    \end{align}
    and $f_p$ and $f_2$ are given in closed form in Propositions \ref{odd_adic_square} and \ref{2_adic_square}, respectively.
\end{theorem}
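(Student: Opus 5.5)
The plan is to complete the square and reduce to a congruence-restricted sum of squares problem. Since $8(k-2)p_k(x)+(k-4)^2=(2(k-2)x-(k-4))^2$, the representations of $N$ as $\sum_{i=1}^n p_k(x_i)$ correspond bijectively to solutions $\mathbf{y}\in\ZZ^n$ of $\sum_i y_i^2=X_N$ with $y_i\equiv -(k-4)\pmod{2(k-2)}$ for every $i$. So $r_n^{\ksymbol{k}}(N)$ counts lattice points on the sphere of radius $\sqrt{X_N}$ lying in a fixed residue class modulo $M=2(k-2)$. I would apply the Heath-Brown delta-method version of the circle method (as in \cite[Corollary 1]{heath_brown_circle_method}), modified for this congruence condition: substitute $y_i=My_i'+a$ with $a=-(k-4)$, which gives a quadratic polynomial $F(\mathbf{y}')$ with nonsingular leading form $M^2\sum y_i'^2$, to which the theorem applies directly. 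This yields
\begin{align*}
r_n^{\ksymbol{k}}(N)=\sigma_\infty\,\mathfrak{S}^{\ksymbol{k}}(N)+O(N^{(n-1)/4+\varepsilon}),
\end{align*}
where the error matches \eqref{classic_asymptotic_formula_r_square} because $n\ge4$.

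Next I compare the main terms with those in \eqref{classic_asymptotic_formula_r_square} at $X_N$. The singular integral for $\mathbf{y}'$ is the one for squares scaled by the Jacobian, namely $M^{-n}$ times a factor $M$ from the normalization in the level, so it gives $\pi^{n/2}X_N^{n/2-1}/(\Gamma(n/2)M^{n-1})$. This produces the $2^{n-1}(k-2)^{n-1}$ in the denominator, and the remaining powers of $2$ are absorbed into the $2$-adic densities.

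For the singular series, both series factor as Euler products. For $p\nmid 2(k-2)$ the shift $y=My'+a$ is invertible over $\ZZ_p$, so the local densities agree. The theorem then follows by dividing out the square densities at $p\mid 2(k-2)$, which is exactly $\mathcal{N}_n(X_N,k-2)$ assembled from Propositions \ref{odd_adic_square} and \ref{2_adic_square}. It also requires inserting the polygonal densities at these primes.

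At odd $p\mid k-2$, the congruence $y\equiv a\pmod{p^{v}}$ with $p\nmid a$ has been arranged so that the condition determines $y$ modulo $p^v$. The density of the polygonal problem is then trivially $1$ after normalization, as $p_k$ is a polynomial over $\ZZ_p$ whose equation $\sum p_k(x_i)=N$ has $p$-adic density $1$, since $p_k(x)\equiv x\cdot(\text{unit})$ mod $p$ is linear. This explains why only the numerator $1+\dots$ at $p=2$ survives.

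At $p=2$ the polygonal density is $1+(-1)^{\lfloor n/4\rfloor}4^{n-2}T(X_N,n,k)$, by Proposition \ref{2_adic_polygonal}.

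The main obstacle I expect is this $2$-adic bookkeeping. I would need to check that the Gauss-sum normalizations of $T$ and $f_2$, including the $(-1)^{\lfloor n/4\rfloor}$ sign coming from $\left(\frac{1+i}{\sqrt2}\right)^{n}$-type factors, line up with the constant $2^{n-3}$. I would also have to handle the cases $k$ even and odd separately, since $v_2(M)$ and the parity of $a$ differ. A secondary check is that the Heath-Brown error term is uniform when the shift $a$ is included, which holds because $k$ is fixed.
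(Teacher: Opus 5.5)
Your outline is the paper's: complete the square, run a congruence-restricted Heath-Brown circle method, then compare local densities with those of $\sum y_i^2=X_N$ so that only $p\mid 2(k-2)$ survive. Your Hensel argument at odd $p\mid k-2$ is a valid substitute for the Gauss-sum computation of Proposition~\ref{odd_adic_polygonal}: there $p_k(x)\equiv x\pmod p$, so $p_k$ permutes $\ZZ/p^d\ZZ$ and $\sum p_k(x_i)=N$ has exactly $p^{d(n-1)}$ solutions mod $p^d$.

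However, your constant is off by a factor of $4$. With $F(\mathbf{y}')=\sum_i(My_i'+a)^2-X_N$, the Jacobian gives a singular integral of exactly $M^{-n}\pi^{n/2}X_N^{n/2-1}/\Gamma(n/2)$, with no extra factor $M$. If you instead normalize by the polygonal equation $\sum_i p_k(x_i)-N=F/(8(k-2))$, the extra factor is $8(k-2)=4M$, not $M$. The local densities must be normalized to match. For $F$ they are $p^{v_p(k-2)}$ at odd $p\mid k-2$ and $2^{v_2(k-2)+3}+(-1)^{\lfloor n/4\rfloor}4^nT(X_N,n,k)$ at $p=2$. Their product is $8(k-2)\bigl(1+(-1)^{\lfloor n/4\rfloor}4^{n-2}T(X_N,n,k)\bigr)$, using that $T\neq0$ forces $4\mid k$ and hence $v_2(k-2)=1$. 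Against $M^{-n}=(2(k-2))^{-n}$ this gives $2^{-(n-3)}(k-2)^{-(n-1)}$. Your combination (singular integral carrying $M^{1-n}$, local factors $1$ and $1+(-1)^{\lfloor n/4\rfloor}4^{n-2}T$) puts $2^{n-1}(k-2)^{n-1}$ in the denominator. Your remark that the remaining powers of $2$ are absorbed into the $2$-adic density is not consistent with the $2$-adic value you then quote. For $n=4$, $k=6$ you would obtain $\frac{1}{96}r_4^{\square}(32N+16)$ instead of the correct (in fact exact) $\frac{1}{24}r_4^{\square}(32N+16)$.

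The analytic step is also asserted rather than proved. Corollary~1 of \cite{heath_brown_circle_method} concerns $F(\mathbf{x})=m$ for a positive-definite \emph{form}. Your $F(\mathbf{y}')$ has a linear part, and its quadratic part $M^2\sum_i y_i'^2$ vanishes identically modulo every $p\mid M$. So the inputs behind the $O(N^{(n-1)/4+\varepsilon})$ error must be re-established:
\begin{itemize}
\item the Kloosterman/Sali\'e bound for $S_p(\mathbf{c})$ requires completing the square, so it is only available for $p\nmid M$;
\item multiplicativity in $q$ and the square-free/square-full estimate for $\sum_{q\le X}|S_q(\mathbf{c})|$ must be checked with the bad primes absorbed into constants;
\item the singular series must be identified with the Euler product of the restricted densities;
\item since $P^{-2}F(P\mathbf{z})$ now depends on $P$ through the linear term, Heath-Brown's integral lemmas do not apply verbatim either.
\end{itemize}
This is exactly what Section~2 of the paper does. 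It keeps the homogeneous $F_N$ and imposes $\mathbf{x}\equiv\mathbf{k}\pmod M$ by Poisson summation over residue classes modulo $Mq$, so that $I_q^{\equiv}(\mathbf{c})=I_q(\mathbf{c}/M)$ and the integral estimates carry over unchanged. It then proves the two-modulus multiplicativity (Lemma~\ref{multiplicative_property}), the Weil/Sali\'e bound only for $p\nmid T$ (Lemma~\ref{prime_bound_on_S}), and Lemma~\ref{sum_of_S_qs}, which lead to Theorem~\ref{modification_theorem}. Your substitution is equivalent to this setup once the change of variables is undone. But these verifications are the substance of the proof and cannot be replaced by a direct appeal to Corollary~1.
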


We note that $\mathcal{N}_n(N,L)$ is simply the product of $p$-adic densities of the sum of $n$ squares equation $\sum_{i=1}^{n}x_i^2=N$ over all $p\mid 2L$.

The factor affixed to $r_n^{\square}(X_N)$ in Theorem \ref{main_result} will often simplify greatly upon a specific choice of $n$ and $k$. For example, we have
\begin{align*}
    r_4^{\ksymbol{6}}(N)&=\frac{1}{24}r_4^{\square}(32N+16)+O(N^{3/4+\varepsilon}),\\
    r_8^{\ksymbol{5}}(N)&=\frac{1}{78705} r_8^{\square}(24N+8)+O(N^{7/4+\varepsilon}).
\end{align*}
The first relation above is particularly special because it turns out that its error is actually zero (this is not true for the second relation); that is, we have the exact identity $r_4^{\ksymbol{6}}(N)={(24)}^{-1}r_4^{\square}(32N+16)$. We defer the proof and further discussion of such exact identities to Section \ref{directions_for_future_work}.

By Theorem \ref{main_result}, for $n\geq4$, the problem of estimating the $m$\textsuperscript{th} moment of $r_n^{\ksymbol{k}}(N)$ is reduced to estimating the $m$\textsuperscript{th} moment of $r_n^{\square}(X_N)$ possibly twisted by a weight such as $(-1)^N$. We have, as an example, estimated the second moment of $r_4^{\ksymbol{k}}$ for all $k\geq3$ using this reduction. We stress that there is no barrier in particular for estimating the moments of $r_n^{\ksymbol{k}}$ for $n>4$, and that we have merely chosen our example to have $n=4$ because of the relative simplicity and because it generalizes the known asymptotics of $\sum_{N\leq x} r_4^{\square}(N)^2$.

\begin{cor}\label{moment_estimates_corollary}
    Let $k\geq3$ and $p_1,\dots,p_r$ be the odd prime divisors of $k-2$. Then
    \begin{align}\label{second_moment_of_r_4_k}
        \sum_{N\leq x} r_4^{\ksymbol{k}}(N)^2=\frac{C_k\zeta(3)}{(k-2)^4}\prod_{i=1}^{r}\frac{1-p_i^{-3}}{1-p_i^{-4}} x^3+O(x^{11/4+\varepsilon}),
    \end{align}
    where $C_k=448$ if $k\not\equiv0\pmod{4}$ and $C_k=512$ if $k\equiv 0\pmod{4}$.
\end{cor}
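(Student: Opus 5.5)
Specialize Theorem \ref{main_result} to $n=4$, where $X_N=8(k-2)N+4(k-4)^2$ and the error term is $O(N^{3/4+\varepsilon})$. We write $r_4^{\ksymbol{k}}(N)=c(N)\,r_4^{\square}(X_N)+O(N^{3/4+\varepsilon})$. Here $c(N)$ is the explicit factor
\[
c(N)=\frac{1-4^{2}T(X_N,4,k)}{2(k-2)^3\mathcal{N}_4(X_N,k-2)},
\]
since $(-1)^{\lfloor 4/4\rfloor}=-1$ and $2^{n-3}=2$. Squaring and summing gives
\[
\sum_{N\le x} r_4^{\ksymbol{k}}(N)^2=\sum_{N\le x}c(N)^2 r_4^{\square}(X_N)^2+O\Big(\sum_{N\le x} N^{3/4+\varepsilon}r_4^{\square}(X_N)\Big)+O(x^{5/2+\varepsilon}).
\]
Using $r_4^{\square}(m)=8\sigma^*(m)\ll m^{1+\varepsilon}$, the cross term is $O(x^{11/4+\varepsilon})$, which is the claimed error. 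So the whole task is to evaluate $\sum_{N\le x}c(N)^2 r_4^{\square}(X_N)^2$ with a main term $\asymp x^3$ and error $O(x^{11/4+\varepsilon})$, and in fact a much better error should be available.

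The next step is to understand $c(N)$ through Propositions \ref{odd_adic_square}, \ref{2_adic_square} and \ref{2_adic_polygonal}. For $n=4$ the odd local factors $1+p^{-2}f_p(X_N,4)$ with $p\mid k-2$ depend only on $v_p(X_N)$. Because $X_N\equiv 4(k-4)^2\pmod{8(k-2)}$ and $p\mid k-2$ forces $p\nmid k-4$, we get $p\nmid X_N$, so these factors take a single constant value for every $N$. Likewise the $2$-adic factors $f_2$ and $T$ depend only on the $2$-adic class of $X_N$. I expect that $c(N)$ either is constant or depends only on $N$ modulo a fixed power of $2$, and the split $4\mid k$ versus $4\nmid k$ (giving $C_k=512$ versus $448$) should come from here. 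When $4\mid k$, $v_2(X_N)$ can grow along $N\to(k-4)^2/(4-2k)$ in $\ZZ_2$; this is harmless because $r_4^{\square}(m)=24\sigma(m_{\mathrm{odd}})$ for even $m$ and the density factors absorb the dependence on $v_2$.

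It remains to evaluate $\sum_{N\le x}\sigma^*(X_N)^2$ restricted to residue classes, where $X_N$ runs over an arithmetic progression $a\bmod q$ with $q=8(k-2)$ and $\gcd(a_{\mathrm{odd}},k-2)=1$. Writing $\sigma(m)^2/m^2=\sum_{d,e\mid m}(de)^{-1}$ with a Dirichlet-convolution expansion $\sigma(m)^2/m^2=\sum_{d\mid m}g(d)$, $g(d)\ll d^{-1+\varepsilon}$, we have
\[
\sum_{m\le y,\ m\equiv a\ (q)} m^2\sum_{d\mid m}g(d),
\]
where the terms with $\gcd(d,q)\mid a$ contribute $\frac{y^3}{3q}\sum_{d}g(d)\frac{\gcd(d,q)}{d}\mathbf 1$ plus $O(y^{2+\varepsilon})$. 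The resulting Euler product is $\zeta(3)$-like: the primes $p\nmid 2(k-2)$ give the local factor $(1-p^{-4})^{-1}(1-p^{-3})^{\ldots}$, yielding $\zeta(3)/\zeta(4)$-type constants, while primes dividing $k-2$ cannot divide $m$, which removes their factors and produces the correction $\prod(1-p_i^{-3})/(1-p_i^{-4})$. Converting from $y=X_x\sim 8(k-2)x$ to $x$ and multiplying by $c^2$ then yields $(k-2)^{-4}$ times a constant.

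The main obstacle is the bookkeeping at $p=2$. This means combining $c(N)^2$ with the $2$-adic behavior of $\sigma^*(X_N)$ over the classes $N\bmod 2^j$, and summing the geometric series in $v_2$ when $4\mid k$, to land exactly on $448$ and $512$. I would first do $k=4$ as a check, where the sum must reduce to the known asymptotic for $\sum r_4^{\square}(N)^2$, then $k=3$ and $k=6$, the latter using the exact identity $r_4^{\ksymbol{6}}(N)=r_4^{\square}(32N+16)/24$ to calibrate.
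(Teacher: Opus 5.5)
Your proposal is correct in outline, and it evaluates the main term by a genuinely different route from the paper's.

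\textbf{Where you agree with the paper.} The reduction is the same: specialize Theorem \ref{main_result} to $n=4$, square, and absorb the cross term into $O(x^{11/4+\varepsilon})$.

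\textbf{The paper's route.} The paper first computes $c(N)$ exactly, in the proof of Corollary \ref{bringmann_corollary}:
\begin{itemize}
\item $c(N)=\frac{2}{3(k-2)^3}\prod_i\frac{p_i^2}{p_i^2-1}$ for $k$ odd;
\item $c(N)=\frac{8}{3(k-2)^3}\prod_i\frac{p_i^2}{p_i^2-1}$ for $k\equiv2\pmod 4$;
\item $c(N)=\frac{8(2+(-1)^N)}{3(k-2)^3}\prod_i\frac{p_i^2}{p_i^2-1}$ for $4\mid k$.
\end{itemize}
In the last case $v_2(X_N)=4$ exactly when $N$ is odd, and the $v_2$-dependence of $1-f_2=3\cdot2^{-v}$ and $1-16T=3\cdot 2^{4-v}$ cancels, which confirms your expectation. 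The paper then detects the progression with Dirichlet characters and writes $\sum_N r_4^{\square}(4N)^2\chi(N)N^{-s}$ as $576(\cdots)L(s,\chi)L(s-1,\chi)^2L(s-2,\chi)/L(2s-2,\chi^2)$. It applies Perron's formula with convexity bounds, and handles the $(-1)^N$ weight through the twisted series $(\chi(2)2^{1-s}-1)\mathcal{R}_4(s,\chi)$.

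\textbf{What each route buys.} Your convolution over progressions needs no analytic continuation or convexity bounds. It handles the parity dependence simply by splitting $X_N$ into classes modulo $16(k-2)$. It also gives $O(y^{2+\varepsilon})$ for the progression sums instead of $O(y^{7/3+\varepsilon})$, though this gain is invisible in the end because the $x^{11/4+\varepsilon}$ error from Theorem \ref{main_result} dominates. The paper's Dirichlet-series packaging is the more systematic template for higher moments.

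\textbf{Two slips to fix.}
\begin{itemize}
\item Since $4\mid X_N$ always, you must expand $h(m)^2=\sum_{d\mid m}g(d)$ with $h(m)=\sigma^*(m)/m$, not $\sigma(m)^2/m^2$. Here $h$ is multiplicative with $h(2^v)=3\cdot 2^{-v}$ for $v\ge1$.
\item The generic local factor at $p\nmid 2(k-2)$ is $\sum_v g(p^v)p^{-v}=(1-p^{-4})(1-p^{-3})^{-1}(1-p^{-2})^{-2}$. So $(1-p^{-4})$ sits in the numerator, and there is a factor $(1-p^{-2})^{-2}$ you omitted. The odd primes therefore contribute $\frac{\zeta(3)\zeta(2)^2}{\zeta(4)}\cdot\frac{21}{40}=\frac{21}{16}\zeta(3)$ times $\prod_i(1-p_i^{-3})(1-p_i^{-2})^2(1-p_i^{-4})^{-1}$, since the local factor at each $p_i$ is $1$.
\end{itemize}

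\textbf{The $2$-adic step you flag as the main obstacle is then short.} The $2$-part of $h(X_N)^2$ is:
\begin{itemize}
\item $9/16$ for $k$ odd;
\item $9/256$ for $k\equiv2\pmod 4$;
\item for $4\mid k$, $9/256$ on odd $N$; on even $N$, $X_N=32t$ with $t$ equidistributed $2$-adically, so the average is $\sum_{j\ge0}2^{-j-1}\cdot 9\cdot 4^{-5-j}=9/1792$.
\end{itemize}
Use main terms $y^3/(3q)$, or $y^3/(6q)$ per parity class, with $q=8(k-2)$ and $y\sim qx$. When $4\nmid k$, $c^2$ times the $2$-part equals $\frac14(k-2)^{-6}\prod_i(1-p_i^{-2})^{-2}$ in both subcases, so the constant is $64\cdot\frac{64}{3}\cdot\frac{21}{16}\cdot\frac14=448$. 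When $4\mid k$ it is $64\cdot\frac{64}{6}\cdot\frac{21}{16}\bigl(\frac14+\frac{9}{28}\bigr)=512$, exactly as claimed.
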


We suspect that the error in \eqref{second_moment_of_r_4_k} can be improved, and suggest it as a possible direction for future work. We have opted to compute a second moment (as opposed to a first moment) for our example since the estimation of the first moments is essentially a generalized Gauss circle problem, which is simpler than using Theorem \ref{main_result}. Indeed, for all $n\in\ZZ^+$, we have that $\sum_{N\leq x} r_n^{\ksymbol{k}}(N)$ equals
\begin{align*}
    \#\left\{\mathbf{y}\in\ZZ^n:\sum_{i=1}^{n}p_k(y_i)\leq x\right\}=\#\left\{\mathbf{y}\in\ZZ^n:\sum_{i=1}^{n}\left(y_i-\frac{k-4}{2(k-2)}\right)^2\leq\frac{2x}{k-2}+\frac{n(k-4)^2}{4(k-2)^2}\right\},
\end{align*}
which is equal to $(\pi^{n/2}/\Gamma(n/2+1))(2x/(k-2))^{n/2}+O(x^{(n-1)/2})$. To see that this asymptotic formula indeed holds, see, for example, \cite[Theorem 2.4 and Theorem 2.6]{lattice_asymptotic} and take the lattice $\Lambda=\ZZ^n$ and the set
\begin{align*}
    S=\left\{\mathbf{t}\in\RR^n:\sum_{i=1}^{n}\left(t_i-\frac{k-4}{2(k-2)}\right)^2\leq\frac{2x}{k-2}+\frac{n(k-4)^2}{4(k-2)^2}\right\}.
\end{align*}
Of course, if one follows the same strategy as the proof of Corollary \ref{moment_estimates_corollary}, one will obtain $\sum_{N\leq x} r_4^{\ksymbol{k}}(N)=({2\pi^2}/{(k-2)^2})x^2+O(x^{7/4+\varepsilon})$, in accordance with the above, but with a worse error estimate. However, when estimating the $m$\textsuperscript{th} moment of $r_n^{\ksymbol{k}}(N)$ where $m\geq2$, one loses the simpler generalized Gauss circle problem interpretation, hence our choice to exemplify the use of Theorem \ref{main_result} to obtain \eqref{second_moment_of_r_4_k}.

A result of Bringmann, Jang, Kane, and Tse \cite[Theorem 1.1]{bringmann} asserts that $r_{4,+}^{\ksymbol{k}}(N)=2^{-4}r_4^{\ksymbol{k}}(N)+O(N^{15/16+\varepsilon})$. By this result, we obtain the following corollary of Theorem \ref{main_result}.

\begin{cor}\label{bringmann_corollary}
    Let $k\geq3$ and $p_1,\dots, p_r$ be the odd prime divisors of $k-2$. Then the number $r_{4,+}^{\ksymbol{k}}(N)$ of representations of $N$ as the sum of four ordinary $k$-gonal numbers satisfies
    \begin{align*}
        r_{4,+}^{\ksymbol{k}}(N)=\frac{A_{N,k}}{24(k-2)^3}\left(\prod_{i=1}^{r}\frac{p_i^2}{p_i^2-1}\right)r_4^{\square}(8(k-2)N+4(k-4)^2)+O(N^{15/16+\varepsilon}),
    \end{align*}
    where $A_{N,k}=1$ if $k$ is odd, $A_{N,k}=4$ if $k\equiv 2\pmod 4$, and $A_{N,k}=8+4(-1)^{N}$ if $k\equiv0\pmod{4}$.
\end{cor}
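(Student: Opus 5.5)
Corollary \ref{bringmann_corollary} will follow by combining \cite[Theorem 1.1]{bringmann} with the case $n=4$ of Theorem \ref{main_result}. The work is to evaluate the prefactor explicitly. The result of Bringmann, Jang, Kane, and Tse gives $r_{4,+}^{\ksymbol{k}}(N)=2^{-4}r_4^{\ksymbol{k}}(N)+O(N^{15/16+\varepsilon})$. Theorem \ref{main_result} with $n=4$ gives $r_4^{\ksymbol{k}}(N)$ as an explicit multiple of $r_4^{\square}(X_N)$, where $X_N=8(k-2)N+4(k-4)^2$, with error $O(N^{3/4+\varepsilon})$. This error is absorbed into $O(N^{15/16+\varepsilon})$. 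It therefore suffices to show
\begin{align*}
\frac{1+(-1)^{1}4^{2}T(X_N,4,k)}{2\cdot 16\,(k-2)^{3}\mathcal{N}_4(X_N,k-2)}=\frac{A_{N,k}}{24(k-2)^3}\prod_{i=1}^{r}\frac{p_i^2}{p_i^2-1},
\end{align*}
noting that $\lfloor 4/4\rfloor=1$ and $2^{n-3}=2$. Here the factor $1/16$ from the Bringmann et al.\ relation is folded into the left-hand side.

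\textbf{Odd primes.} Take $p\mid k-2$ odd. Then $X_N\equiv 4(k-4)^2\equiv 16\pmod p$, since $k-4\equiv -2\pmod p$. So $p\nmid X_N$, and Proposition \ref{odd_adic_square} should give $1+p^{-2}f_p(X_N,4)=1-p^{-2}$, the standard unit local density for four squares (with $\chi$ trivial since the discriminant is a square). Its reciprocal is exactly $p^2/(p^2-1)$, which yields the product over the $p_i$.

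\textbf{The prime $2$.} This is the main obstacle: I must compute $1-f_2(X_N,4)$ from Proposition \ref{2_adic_square} and $1-16\,T(X_N,4,k)$ from Proposition \ref{2_adic_polygonal}, and show that their ratio, times $1/32$, equals $A_{N,k}/24$. I will split into cases on $k$.

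\begin{enumerate}
\item[\emph{$k$ odd.}] $X_N=8(k-2)N+4(k-4)^2$ has exact $2$-power $4$, and $X_N/4=2(k-2)N+(k-4)^2\equiv 1+2N\pmod 4$, which depends on $N$. I expect both densities to be read off in the same $2$-adic class, so that the $N$-dependence cancels and the ratio is $1$.
\item[\emph{$k\equiv 2\pmod 4$.}] $k-4\equiv 2\pmod 4$, so $4(k-4)^2\equiv 16\pmod{64}$, while $8(k-2)N$ is divisible by $32$. Hence $X_N=16u$ with $u$ odd.
\item[\emph{$k\equiv 0\pmod 4$.}] $X_N=16\bigl((k-2)N/2+((k-4)/2)^2\bigr)$. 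The inner term $((k-4)/2)^2$ is even, and $(k-2)N/2$ has the parity of $N$. This produces the dependence on $(-1)^N$ and the higher $2$-adic valuation responsible for $A_{N,k}=8\pm4$.
\end{enumerate}

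In each case I will substitute into the closed forms of $f_2$ and $T$. The density of the polygonal equation at $2$ is constant in the first two cases, since the substitution $x\mapsto 2(k-2)x-(k-4)$ is $2$-adically nice there. For $k\equiv0\pmod 4$, by contrast, $(k-4)$ shares a factor of $2$ with $2(k-2)$, and the local solution count depends on $N\bmod 2$. I will verify the constants $1$, $4$, $8+4(-1)^N$ against small cases, for instance $k=6$, where the factor $1/24$ matches the example $r_4^{\ksymbol{6}}=\frac1{24}r_4^{\square}(32N+16)$ divided by $16$ with $A=4$.
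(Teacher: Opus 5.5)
Your route is the same as the paper's. You combine \cite[Theorem 1.1]{bringmann} with the $n=4$ case of Theorem \ref{main_result}, note that odd $p\mid k-2$ do not divide $X_N$ so that $\delta_p^{\square}(X_N,4)=1-p^{-2}$, and evaluate the $2$-adic factor case by case. Your $2$-adic valuations are right: $v_2(X_N)=2$ for $k$ odd, $v_2(X_N)=4$ for $k\equiv2\pmod 4$, and for $k\equiv0\pmod 4$ either $v_2(X_N)=4$ or $v_2(X_N)\geq5$ according as $N$ is odd or even. Your $k=6$ sanity check is also correct.

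\textbf{The gap.} You never carry out the substitution into Propositions \ref{2_adic_square} and \ref{2_adic_polygonal}, and that substitution is the entire content of the proof. Moreover, the one value you do predict is wrong. For $k$ odd the ratio $(1-16T(X_N,4,k))/(1-f_2(X_N,4))$ must be $32/24=4/3$, not $1$; it is $A_{N,k}$, not this ratio, that equals $1$.

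\textbf{The missing computation.} For $n=4$ we have $\theta=-1$, and we are in case (i) of both propositions. Those formulas depend only on $v_2(X_N)$, so the $N$-dependence of $X_N/4\bmod 4$ that worried you plays no role; there is nothing to cancel.
\begin{itemize}
\item Case (i) of Proposition \ref{2_adic_square} gives $1-f_2(X_N,4)=3\cdot 2^{-v_2(X_N)}$ whenever $v_2(X_N)\geq1$. In your three cases this is $3/4$, $3/16$, and $3\cdot 2^{-v_2(X_N)}$.
\item Proposition \ref{2_adic_polygonal} gives $T=0$ except when $k\equiv0\pmod 4$ and $v_2(X_N)>4$, that is, $N$ even. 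In that case $1-16T=3\cdot2^{4-v_2(X_N)}$.
\end{itemize}
The ratio is therefore $4/3$ for $k$ odd, $16/3$ for $k\equiv2\pmod4$, $16$ for $k\equiv0\pmod4$ with $N$ even, and $16/3$ for $k\equiv0\pmod4$ with $N$ odd. Since $A_{N,k}=\tfrac{24}{32}\cdot(\text{ratio})$, this yields $A_{N,k}=1,4,12,4$ respectively, as claimed.

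One further correction: for $k\equiv0\pmod 4$ and $N$ even, the polygonal density itself is $16-256T=3\cdot 2^{8-v_2(X_N)}$. This depends on $v_2(X_N)$, not merely on $N\bmod 2$. Only its ratio with $\delta_2^{\square}(X_N,4)$ is a function of the parity of $N$.
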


From Corollary \ref{bringmann_corollary}, we recover as special cases \cite[Corollary 1.2]{bringmann} and \cite[Corollary 1.4]{bringmann}, which assert respectively that
\begin{align*}
    r_{4,+}^{\ksymbol{6}}(N)=\frac{1}{16}\sigma(2N+1)+O(N^{15/16+\varepsilon}),\\
    r_{4,+}^{\ksymbol{5}}(N)=\frac{1}{24}\sigma(6N+1)+O(N^{15/16+\varepsilon}).
\end{align*}
These formulas follow immediately from Corollary \ref{bringmann_corollary} upon noting the identity $r_4^{\square}(2N)=8\sum_{\substack{4\nmid d\mid 2N}}d=8\sum_{d\mid N_2}(d+2d)=24\sigma(N_2)$, where $N_2=2^{-v_2(N)}N$ is the odd part of $N$.

In addition to Corollary \ref{bringmann_corollary}, we note that, in view of Corollary \ref{moment_estimates_corollary}, the relation $r_{4,+}^{\ksymbol{k}}(N)=2^{-4}r_4^{\ksymbol{k}}(N)+O(N^{15/16+\varepsilon})$ gives the formula
    \begin{align*}
        \sum_{N\leq x} r_{4,+}^{\ksymbol{k}}(N)^2=\frac{C_k\zeta(3)}{256(k-2)^4}\prod_{i=1}^{r}\frac{1-p_i^{-3}}{1-p_i^{-4}} x^3+O(x^{47/16+\varepsilon}),
    \end{align*}
where we have the error of $O(x^{47/16+\varepsilon})$ since $\sum_{N\leq x}r_4^{k}(N)N^{15/16+\varepsilon}\ll x^{47/16+\varepsilon}$, which follows by summation by parts since $\sum_{N\leq x}r_4^{k}(N)\ll x^2$. We do remark, however, that we do not need \cite[Theorem 1.1]{bringmann} to estimate the first moment $\sum_{N\leq x} r_{4,+}^{\ksymbol{k}}(N)$. In fact, it is not too difficult to see that by modifying the generalized Gauss circle problem methodology outlined earlier, one can show that for all $n\in\ZZ^+$,
\begin{align*}
     \sum_{N\leq x} r_{n,+}^{\ksymbol{k}}(N)= \frac{\pi^{n/2}}{\Gamma(n/2+1)}\left(\frac{x}{2(k-2)}\right)^{n/2}+O(x^{(n-1)/2}).
\end{align*}

Through direct analysis of the main term from Corollary \ref{bringmann_corollary}, we obtain the following result governing which subsequences of positive integers $r_{4,+}^{\ksymbol{k}}$ may be bounded on (if any).

\begin{cor}\label{conv_2_adically_corollary}
    If $k\equiv0\pmod{4}$, then any strictly increasing infinite subsequence of positive integers on which $r_{4,+}^{\ksymbol{k}}$ is bounded converges $2$-adically to $(k-4)^2/(4-2k)\in\ZZ_2$. If $k\not\equiv0\pmod{4}$, then there is no strictly increasing infinite subsequence of positive integers on which $r_{4,+}^{\ksymbol{k}}$ is bounded.
\end{cor}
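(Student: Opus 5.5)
Throughout, write $M=8(k-2)N+4(k-4)^2$. We start from Corollary \ref{bringmann_corollary}, which gives
\begin{align*}
r_{4,+}^{\ksymbol{k}}(N)=c_{N,k}\,r_4^{\square}(M)+O(N^{15/16+\varepsilon}),
\end{align*}
where $c_{N,k}=\frac{A_{N,k}}{24(k-2)^3}\prod_{i}\frac{p_i^2}{p_i^2-1}$. Since $A_{N,k}\geq 1$ in every case, $c_{N,k}$ is bounded below by a positive constant depending only on $k$. Jacobi's formula gives $r_4^{\square}(M)=8\sum_{4\nmid d\mid M}d$. The odd divisors of $M$ contribute at least $\sigma(M_2)$, where $M_2$ is the odd part of $M$, so $r_4^{\square}(M)\geq 8\sigma(M_2)\geq 8M_2$. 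Hence along a strictly increasing sequence $N_j\to\infty$ on which $r_{4,+}^{\ksymbol{k}}$ is bounded, we need $c_{N_j,k}\,r_4^{\square}(M_j)\ll N_j^{15/16+\varepsilon}$. This forces $M_{j,2}\ll N_j^{15/16+\varepsilon}$, so $2^{v_2(M_j)}\gg N_j^{1/16-\varepsilon}\to\infty$, i.e.\ $v_2(M_j)\to\infty$. That statement is the heart of both parts.

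\textbf{Case $k\not\equiv 0\pmod 4$.} Here we show $v_2(M)$ is bounded, which gives a contradiction. If $k$ is odd, then $(k-4)^2$ is odd, so $M\equiv 4\pmod 8$ and $v_2(M)=2$. If $k\equiv 2\pmod 4$, write $k-4=2m$ with $m$ odd. Then $M=8(k-2)N+16m^2$, and since $k-2\equiv 0\pmod 4$ we get $32\mid 8(k-2)N$, so $M\equiv 16\pmod{32}$ and $v_2(M)=4$. In either case $r_4^{\square}(M)\gg M_2\gg N$, so $r_{4,+}^{\ksymbol{k}}(N)\gg N$ for large $N$. No infinite subsequence can have bounded values.

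\textbf{Case $4\mid k$.} Since $v_2(M_j)\to\infty$, we have $M_j\to 0$ in $\ZZ_2$, i.e.
\begin{align*}
8(k-2)N_j+4(k-4)^2\to 0 \quad\text{2-adically}.
\end{align*}
Note $v_2(8(k-2))=4$ because $k-2\equiv 2\pmod 4$. Dividing by $8(k-2)$ changes the valuation by only a bounded amount, so
\begin{align*}
N_j\to -\frac{4(k-4)^2}{8(k-2)}=\frac{(k-4)^2}{4-2k}.
\end{align*}
This limit lies in $\ZZ_2$ since $v_2((k-4)^2)\geq 4=v_2(4-2k)+2$, using $v_2(4-2k)=2$.

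Two points need checking. First, the factor $A_{N,k}=8+4(-1)^N$ is at least $4$, so it causes no degeneracy. Second, the lower bound $r_4^{\square}(M)\geq 8M_2$ must be justified from Jacobi's formula, which the paper already uses above Corollary \ref{conv_2_adically_corollary}. The main obstacle is only bookkeeping: ensuring the error term $O(N^{15/16+\varepsilon})$ is genuinely smaller than the main term whenever $v_2(M)$ stays bounded. This holds because $M_2\geq M/2^{v_2(M)}\gg N$ in that situation.
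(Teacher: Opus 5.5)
Your proposal is correct and follows essentially the same route as the paper: you combine Corollary \ref{bringmann_corollary} with Jacobi's lower bound $r_4^{\square}(M)\geq 8\sigma(M_2)\geq 8M_2$ to force $v_2(8(k-2)N+4(k-4)^2)\to\infty$ along any subsequence on which $r_{4,+}^{\ksymbol{k}}$ is bounded; this is impossible when $4\nmid k$ (the valuation is $2$ or $4$) and gives the limit $(k-4)^2/(4-2k)$ when $4\mid k$. The paper argues the contrapositive through Lemma \ref{conv_2_adically_lemma_squares}, whereas you argue directly and compare the main term with the $O(N^{15/16+\varepsilon})$ error explicitly in terms of the sequence values, which is slightly cleaner.
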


For $k\equiv0\pmod{4}$ and $r=(k-4)/4>0$, the aforementioned result of Meng and Sun \cite{supplement_to_legendre} asserts that for all $N\in\ZZ^+$,
\begin{align}\label{meng_sun_result}
    r_{4,+}^{\ksymbol{k}}\mleft(4r^2\frac{4^{N\phi(2r+1)}-1}{2r+1}\mright)=0.
\end{align}
The $k\equiv0\pmod{4}$ case of Corollary \ref{conv_2_adically_corollary} can thus be viewed as a supplement to \eqref{meng_sun_result} upon observing that the strictly increasing infinite subsequence $4r^2(4^{N\phi(2r+1)}-1)/(2r+1)$ does indeed converge $2$-adically to $(k-4)^2/(4-2k)$. It would be interesting to obtain more results on the subsequences on which $r_{4,+}^{\ksymbol{k}}$ is bounded, and we suggest it as a possible direction for future work.

In a landmark paper, Heath-Brown \cite{heath_brown_circle_method} developed a variant of the circle method for problems on quadratic forms (and in a later paper \cite{heath_brown_cubic}, cubic forms). We obtain Theorem \ref{main_result} by modifying the Heath-Brown circle method to count solutions to the sum of $n$ squares equation satisfying a fixed set of congruence equations. Setting notation, for $\mathbf{k}=(k_1,\dots,k_n)\in\ZZ^n$ and $M\in\ZZ^+$, we define $r_n^{\equiv}(N)$ to be the number of solutions $(x_1,\dots,x_n)\in\ZZ^n$ to $\sum_{i=1}^{n}x_i^2=N$ such that each $x_i\equiv k_i\bmod{M}$. We use the symbol $\mathbf{k}$ because we later take $\mathbf{k}=(k,\dots,k)$ in the pursuit of obtaining Theorem \ref{main_result}, but for now, we leave $\mathbf{k}\in\ZZ^n$ arbitrary. We are interested in $r_n^{\equiv}(N)$ because it allows us to count solutions to quadratic Diophantine equations of shape $\sum_{i=1}^{n}Q(x_i)=N$, where $Q(x)=ax^2+bx+c$ is integral-valued on $\ZZ$. Indeed, beginning with the equation $\sum_{i=1}^{n}(ax_i^2+bx_i+c)=N$ and completing the square gives $\sum_{i=1}^{n}\left(4ax_i+2b\right)^2=16aN+4nb^2-16anc$, whence we have Remark \ref{r_equiv_remark}.

\begin{remark}\label{r_equiv_remark}
    Let $Q(x)=ax^2+bx+c$ be integral-valued on $\ZZ$. Let $\mathbf{k}=(2b,\dots,2b)$ and $M=4a$. Then the number $r_n^{Q}(N)$ of solutions to $\sum_{i=1}^{n}Q(x_i)=N$ over $\mathbf{x}\in\ZZ^n$ equals $r_n^{\equiv}(16aN+4nb^2-16anc)$.
\end{remark}

In Theorem \ref{modification_theorem}, we relate $r_{n}^{\equiv}(N)$ to $r_n^{\square}(N)$ for arbitrary $\mathbf{k}\in\ZZ^n$ and $M\in\ZZ^+$. So by Remark \ref{r_equiv_remark}, our methods succeed in proving an asymptotic relation between $r_n^Q(N)$ and $r_n^{\square}(N)$, which is in closed form up to evaluating the $p$-adic densities of the Diophantine equation $\sum_{i=1}^{n}Q(x_i)=N$ for $p$ dividing $4a$. Theorem \ref{main_result} then results from the special case of taking $Q(x)=p_k(x)$ and computing the polygonal $p$-adic densities. In particular, we have the following more general result due to Theorem \ref{modification_theorem} and Remark \ref{r_equiv_remark}.

\begin{theorem}\label{more_general_theorem}
    Let $Q(x)=ax^2+bx+c$ be integral-valued on $\ZZ$. Let $n\geq4$ and for $N\in\ZZ^+$, let $q_N=16aN+4nb^2-16anc$. Then the number $r_n^Q(N)$ of solutions over $\ZZ^n$ to $\sum_{i=1}^{n}Q(x_i)=N$ satisfies
    \begin{align*}
        r_n^{Q}(N)=\frac{(4a)^{-n}}{\mathcal{N}_n(q_N,2a)}\left(\prod_{p\mid 4a}\delta_p^{Q}(N,n)\right)r_n^{\square}(q_N)+O(N^{(n-1)/4+\varepsilon}),
    \end{align*}
    where $\mathcal{N}_n$ is as in \eqref{partial_square_singular_product}, and $\delta_p^{Q}(N,n)$ is the $p$-adic density
    \begin{align*}
        \delta_p^{Q}(N,n)=\lim_{d\to\infty}p^{-d(n-1)}{\#\left\{\mathbf{x}\in\left(\ZZ/p^{d+v_p(4a)}\ZZ\right)^n:\sum_{i=1}^{n}x_i^2\equiv q_N\bmod p^d,~x_i\equiv 2b\bmod p^{v_p(4a)}\right\}}.
    \end{align*}
\end{theorem}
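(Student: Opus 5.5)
The proof combines Remark \ref{r_equiv_remark} with a Heath-Brown circle method count of sums of squares in prescribed residue classes. Let $\mathbf{k}=(2b,\dots,2b)$ and $M=4a$. By Remark \ref{r_equiv_remark}, $r_n^Q(N)=r_n^{\equiv}(q_N)$, so it suffices to prove an asymptotic for $r_n^{\equiv}(m)$ with $m=q_N$ and compare it with \eqref{classic_asymptotic_formula_r_square}. This is the content of the (forthcoming) Theorem \ref{modification_theorem}, which I would prove in the following steps.

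\textbf{Step 1: set up the counting sum.} Write $\mathbf{x}=\mathbf{k}+M\mathbf{y}$, so that the count becomes a count of $\mathbf{y}\in\ZZ^n$ with $F(\mathbf{y})=\sum_i(2b+My_i)^2=m$. Here $F$ is a non-homogeneous quadratic polynomial whose quadratic part is $M^2\sum y_i^2$, which is nonsingular. I would apply Heath-Brown's delta-symbol expansion \cite[Theorem 1]{heath_brown_circle_method} with a smooth weight $w$ supported on the scale $P\asymp\sqrt{m}$, taking $Q=P$. Heath-Brown's setup allows arbitrary integral quadratic polynomials, and he treats $F^{(0)}$ plus lower-order terms. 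Poisson summation then expresses the count as
\[
\sum_{q}\sum_{\mathbf{c}\in\ZZ^n} q^{-n}S_q(\mathbf{c})I_q(\mathbf{c}),
\]
with complete exponential sums $S_q(\mathbf{c})$ and oscillatory integrals $I_q(\mathbf{c})$.

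\textbf{Step 2: extract the main term and bound the rest.} The $\mathbf{c}=\mathbf{0}$ terms give $\sigma_\infty\,\mathfrak{S}^{\equiv}(m)\,m^{n/2-1}$. Since the change of variables scales volume by $M^{-n}$, the singular integral equals $M^{-n}$ times that for $\sum x_i^2=m$. The $\mathbf{c}\neq\mathbf{0}$ terms are bounded exactly as in \cite[Corollary 1]{heath_brown_circle_method}. Because the quadratic part is diagonal and nonsingular, Heath-Brown's bounds on $S_q(\mathbf{c})$ and $I_q(\mathbf{c})$ carry over, with constants depending on $M$, and give error $O(m^{(n-1)/4+\varepsilon})$ when $n\ge4$. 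I expect this uniformity check to be the \textbf{main obstacle}. Heath-Brown's Corollary 1 is stated for $F=F^{(0)}+\text{linear}+\text{constant}$, so the check must confirm that the shifted polynomial still satisfies his hypotheses. For $n=4$, the $(n-1)/4$ exponent needs the nonsquare-determinant case of his Theorem 4/Corollary 1, and the determinant here is $M^{8}$, a perfect square. I would first try to handle this by noting that the sum over $q$ of the $\mathbf{c}=\mathbf{0}$ terms still converges absolutely for $n\ge4$. Failing that, I would fall back on his Theorem 7-type bound, which gives $m^{(n-1)/4+\varepsilon}$ for $n\ge4$ when $F$ is diagonal.

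\textbf{Step 3: factor the singular series.} The singular series factors into local densities, $\mathfrak{S}^{\equiv}(m)=\prod_p\sigma_p$. For $p\nmid M$, the congruence condition is vacuous after the invertible change of variables $\mathbf{x}=\mathbf{k}+M\mathbf{y}$ mod $p^d$, so $\sigma_p$ equals the $p$-adic density for $\sum x_i^2=m$. For $p\mid M$, $\sigma_p$ is precisely $\delta_p^Q(N,n)$. The normalization $p^{-d(n-1)}$ with $p^{d+v_p(4a)}$ residues absorbs the factor $p^{-nv_p(M)}$ from the change of variables, and multiplying these factors over $p\mid M$ reproduces the $M^n$ that cancels the $M^{-n}$ from the singular integral.

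\textbf{Step 4: divide by the sum-of-squares count.} Dividing by the $\sum x_i^2=m$ asymptotic \eqref{classic_asymptotic_formula_r_square} cancels the archimedean factor and all $p\nmid 4a$ densities. What remains is the ratio $\prod_{p\mid 4a}\delta_p^Q/\prod_{p\mid 2a}(\text{square density at }p)$ times $(4a)^{-n}$. Since the primes dividing $4a$ are exactly those dividing $2a$ together with $2$, the denominator is $\mathcal{N}_n(q_N,2a)$ by the remark after Theorem \ref{main_result}. This remark is justified by Propositions \ref{odd_adic_square} and \ref{2_adic_square}, which identify $1+p^{-n/2}f_p$ and $1+(-1)^{\lfloor n/4\rfloor}f_2$ as the local densities.

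Finally, substituting $r_n^{\square}(q_N)$ for the main term introduces only another $O(q_N^{(n-1)/4+\varepsilon})$ error. The prefactor is bounded because the local densities of squares are bounded below for $n\ge4$. Since $q_N\asymp N$, the error is $O(N^{(n-1)/4+\varepsilon})$, which gives the stated formula.
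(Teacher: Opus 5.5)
Your top-level argument is the paper's. Remark \ref{r_equiv_remark} with $\mathbf{k}=(2b,\dots,2b)$ and $M=4a$ turns $r_n^Q(N)$ into $r_n^{\equiv}(q_N)$. Theorem \ref{modification_theorem} relates this to $r_n^{\square}(q_N)$, and $\sigma_p^{\equiv}(q_N,n)=\delta_p^Q(N,n)$ by definition. Propositions \ref{odd_adic_square} and \ref{2_adic_square} identify $\prod_{p\mid 4a}\delta_p^{\square}(q_N,n)$ with $\mathcal{N}_n(q_N,2a)$. Had you simply invoked Theorem \ref{modification_theorem}, that would be the whole proof.

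Your re-derivation of that theorem via $G(\mathbf{y})=|\mathbf{k}+M\mathbf{y}|^2-m$ is the same computation in other coordinates. With the weight $w(P^{-1}(\mathbf{k}+M\mathbf{y}))$, your $q^{-n}S_q(\mathbf{c})I_q(\mathbf{c})$ equals the paper's $(Mq)^{-n}S_q^{\equiv}(\mathbf{c})I_q^{\equiv}(\mathbf{c})$: the phases $e(\pm\mathbf{c}\cdot\mathbf{k}/(Mq))$ cancel and the Jacobian supplies $M^{-n}$. In your coordinates multiplicativity in $q$ is the ordinary CRT identity, so you would not need the two-modulus Lemma \ref{multiplicative_property}.

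However, the part you call the main obstacle is only asserted, and your diagnosis of it is off. Heath-Brown's Corollary 1 is a statement about a positive definite form, $F^{(0)}(\mathbf{x})=m$ (this is how the paper quotes it), so you cannot simply quote it for $G$. The square-determinant dichotomy belongs to counting zeros of a quaternary form, not to representing $m\neq 0$; indeed $\sum x_i^2$ itself has square determinant and Corollary 1 applies to it. What the error term actually needs is $\sum_{q\le X}|S_q(\mathbf{c})|\ll X^{(n+3)/2+\varepsilon}P^{\varepsilon}$ for all $|\mathbf{c}|\le P$ (Lemma \ref{sum_of_S_qs}). That comes from the uniform bound $S_q(\mathbf{c})\ll_M q^{1+n/2}$, multiplicativity, and the Weil/Sali\'e bound at odd $p\nmid M$ after completing the square. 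This is the substance of Section 2.

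There are also two errors in your bookkeeping.

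\emph{Step 3.} Nothing cancels the $M^{-n}$. With $v=v_p(M)$, the map $\mathbf{y}\bmod p^d\mapsto \mathbf{k}+p^{v}\mathbf{y}\bmod p^{d+v}$ is a bijection onto the restricted class. So the local density of $G$ at $p\mid M$ is exactly $\delta_p^Q(N,n)$, with no extra power of $p$. The Jacobian $M^{-n}$ therefore survives as the $(4a)^{-n}$ in the statement; as written, Step 3 contradicts Step 4.

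\emph{Final step.} More seriously, your justification here is false: the square densities are not bounded below when $n=4$. Proposition \ref{2_adic_square} gives $\delta_2^{\square}(m,4)=3\cdot 2^{-v_2(m)}$ for even $m$. Moreover, $v_2(q_N)$ is unbounded already for $Q(x)=x^2$ (where $q_N=16N$), and for $Q=p_k$ with $4\mid k$. So you cannot control the prefactor by bounding $1/\mathcal{N}_n(q_N,2a)$.

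The correct argument bounds the ratio instead. The restricted count modulo $p^{d+v}$ is at most $p^{nv}$ times the unrestricted count modulo $p^{d}$, so $\delta_p^Q(N,n)\le p^{nv_p(4a)}\delta_p^{\square}(q_N,n)$. Hence the whole prefactor $(4a)^{-n}\prod_{p\mid 4a}\delta_p^Q(N,n)/\mathcal{N}_n(q_N,2a)$ is at most $1$. Replacing the main term by $r_n^{\square}(q_N)$ then costs only $O(N^{(n-1)/4+\varepsilon})$.
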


Once one calculates each $\delta_p^{Q}(N,n)$ for $p\mid 4a$, one can use Theorem \ref{more_general_theorem} to estimate the moments of $r_n^{Q}$ if $n\geq4$, as we did in Corollary \ref{moment_estimates_corollary} for the second moment of $r_4^{\ksymbol{k}}$ using Theorem \ref{main_result}. However, we remark that, once again, the first moment can be handled by the same generalized Gauss circle problem methodology as before to obtain $\sum_{N\leq x}r_n^Q(N)= (\pi^{n/2}/\Gamma(n/2+1))(x/a)^{n/2}+O(x^{(n-1)/2})$ for all $n\in\ZZ^+$.

In Section \ref{modifying_the_heath_brown Circle Method}, we modify the Heath-Brown circle method to obtain Theorem \ref{modification_theorem}, relating $r_n^{\equiv}(N)$ to $r_n^{\square}(N)$. In Section \ref{evaluating_the_p_adic_densities}, we compute the square $p$-adic densities (which we note may be of independent interest) and the polygonal $p$-adic densities. Then, in Section \ref{section_proof_of_results}, we prove Theorems \ref{main_result} and \ref{more_general_theorem} and Corollaries \ref{moment_estimates_corollary}, \ref{bringmann_corollary}, and \ref{conv_2_adically_corollary}. Finally, we conclude with some directions for future work in Section \ref{directions_for_future_work}.

\section{Modifying the Heath-Brown Circle Method}\label{modifying_the_heath_brown Circle Method}

We proceed by merging the ideas of works \cite{heath_brown_circle_method} and \cite{heath_brown_congruence_restriction} of Heath-Brown. Letting $F$ be a positive-definite quadratic form in $n\geq4$ variables, \cite[Corollary 1]{heath_brown_circle_method} states that the number $r_F(N)$ of solutions to $F(\mathbf{x})=N$ satisfies
\begin{align}\label{pos_def_form_formula}
    r_F(N)=\left(\sigma_{\infty}\prod_{p}\sigma_p^{F}(N)\right)N^{n/2-1}+O(N^{(n-1)/4+\varepsilon}),
\end{align}
where $\sigma_{\infty}$ is the singular integral and $\sigma_p^{F}(N)$ is the $p$-adic density of the Diophantine equation $F(\mathbf{x})=N$. This is essentially the same result (albeit with better error) as Kloosterman \cite{kloosterman_refinement} gave in his refinement of the Hardy--Littlewood circle method, prompting the introduction of the Kloosterman sum. 

By Remark \ref{r_equiv_remark}, we are interested in counting solutions to the sum of $n$ squares problem that satisfy a fixed set of congruence equations. The principal upside of Heath-Brown's circle method, as it pertains to our work, is that the methods used are conducive to modification by imposing congruence conditions upon the solution set of the quadratic Diophantine equation in question. The essential question is then: for $F(\mathbf{x})=\sum_{i=1}^{n}x_i^2$, how does the congruence modification affect formula \eqref{pos_def_form_formula}? This is where Heath-Brown's other work, \cite{heath_brown_congruence_restriction}, comes into play. In \cite{heath_brown_congruence_restriction}, the number of solutions $\mathbf{x}\in \mathcal{B}\subseteq\ZZ^n$ to $F(\mathbf{x})=0$ is explored, where $\mathcal{B}$ is a box of size depending on a variable $B$ and whose elements all satisfy a fixed $n$-dimensional congruence equation. However, \cite{heath_brown_congruence_restriction} uses a traditional form of the circle method without a Kloosterman refinement, thus requiring $n\geq5$. 

By using some of the methods from \cite{heath_brown_congruence_restriction}, we modify formula \eqref{pos_def_form_formula} from \cite{heath_brown_circle_method} to count solutions satisfying a fixed $n$-dimensional congruence equation for the form $F(\mathbf{x})=\sum_{i=1}^{n}x_i^2$, yielding an asymptotic formula for $r_n^{\equiv}(N)$. We find, under the congruence modification, that the singular integral is invariant and that the Euler product of $p$-adic densities changes only at the prime factors of the modulus of the congruence (the asymptotics, including the error up to a constant, are otherwise unaffected; see Theorem \ref{modification_theorem}). From here, to obtain Theorems \ref{main_result} and \ref{more_general_theorem}, it remains to compute the $p$-adic densities of $\sum_{i=1}^{n}x_i^2=N$ with and without the congruence restriction, which is the purpose of Section \ref{evaluating_the_p_adic_densities}.

\subsection{Preliminaries and setup}

We begin by setting standard notation and conventions. Let $e(x)\defeq e^{2\pi ix}$. Per convention, we use $(a,b)$ to denote $\gcd(a,b)$. Let $\phi$ be Euler's phi function and $\omega$ be the prime omega function counting the number of distinct prime factors of its argument. For a prime $p$ and an integer $K$, let $v_p(K)$ be the $p$-adic valuation of $K$. For $T\in\ZZ^+$, when we have $x\bmod T$ in the subscript of a summation, we mean that the summation is over integers $0\leq x\leq T-1$. Similarly, $\sum_{\mathbf{x}\bmod T}\defeq\sum_{\mathbf{x}\in\{0,1,\dots,T-1\}^n}$.

Let $n\geq4$ and $k\geq3$ be fixed integers. Since the dimension $n$ is fixed throughout the paper, we use boldface symbols to denote $n$-dimensional vectors. Let $\mathbf{0}=(0,\dots,0)\in\ZZ^n$ and let $\mathbf{k}\in\ZZ^n$ and $M\in\ZZ^+$ be arbitrary. For $\mathbf{x},\mathbf{y}\in\RR^n$, set $\mathbf{x\cdot y}$ to be the usual dot product. We will use the notation $v_i$ for a vector $\mathbf{v}=(v_1,\dots,v_n)\in\ZZ^n$ to denote its $i$\textsuperscript{th} element $v_i$. We say that $\mathbf{v}$ is divisible by $K\in\ZZ$ if $K\mid v_i$ for all $1\leq i\leq n$. For $T\in\ZZ^+$, we say that $\mathbf{x}\equiv\mathbf{y}\pmod{T}$ if $x_i\equiv y_i\pmod{T}$ for all $1\leq i\leq n$. We will use big $O$ notation and the Vinogradov notation $\ll$ interchangeably as is convenient. When $\varepsilon$ appears in asymptotic notation, we mean that the asymptotic statement holds for all $\varepsilon>0$, with the implied constant depending on $\varepsilon$.

By Remark \ref{r_equiv_remark}, the problem of asymptotically relating $r_n^{Q}$ and $r_n^{\square}$ reduces to determining the proportion of solutions $\mathbf{x}\in\ZZ^n$ to the sum of $n$ squares Diophantine equation that satisfy $x_i\equiv 2b\pmod{4a}$ for all $i$. To understand how we can handle calculating this proportion, it is prudent to understand the core ideas of the Heath-Brown circle method, which can be synopsized as a weighted-approximation version of
the circle method driven by the $\delta$-method of Duke, Friedlander, and Iwaniec \cite{delta_method}.

If we wish to approximate some counting function $\sum_{\mathbf{x}} \mathbf{1}_{S}(\mathbf{x})$ for some set $S\subseteq\ZZ^n$, it is typical to instead consider the weighted counting function $\sum_{\mathbf{x}} W(\mathbf{x})$, where $W$ is some weight function\footnote{For our purposes, weight functions will be infinitely differentiable bounded functions with compact support in $\RR^n$. However, to obtain the desired asymptotic results derived from Proposition \ref{basis_of_heath_brown}, one must further require that the weight function is in the class $\mathscr{C}(S)$ defined in \cite[\S 2]{heath_brown_circle_method}.} that approximates the characteristic function $\mathbf{1}_S$ of $S$. In this vein, for $F(X_1,\dots,X_n)\in\ZZ[X_1,\dots,X_n]$, instead of approximating $\sum_{\mathbf{x}}\mathbf{1}_{\{\mathbf{y}:F(\mathbf{y})=0\}}(\mathbf{x})=\sum_{\mathbf{x}:F(\mathbf{x})=0}1$ as done by other forms of the circle method, Heath-Brown approximates $\sum_{\mathbf{x}:F(\mathbf{x})=0}W(P^{-1}\mathbf{x})$, where $P$ is a variable that we are later interested in taking asymptotically.

In \cite{delta_method}, Duke, Friedlander, and Iwaniec provide a series formula for the Kronecker delta $\delta_{m,0}$, developed as an alternative to the circle method. We refer the interested reader to \cite{app1_of_delta_method} and \cite{app2_of_delta_method} for some further applications of the $\delta$-method. After some minor tweaks to this formula for $\delta_{m,0}$ and writing $\sum_{\substack{\mathbf{x}: F(\mathbf{x})=0}}W(P^{-1}\mathbf{x})=\sum_{\substack{\mathbf{x}}}W(P^{-1}\mathbf{x})\delta_{F(\mathbf{x}),0}$, Heath-Brown \cite{heath_brown_circle_method} obtains Proposition \ref{basis_of_heath_brown}, which is the basis for all the results of the Heath-Brown circle method.

\begin{prop}[{\cite[Theorem 2]{heath_brown_circle_method}}]\label{basis_of_heath_brown}
    Let $W$ be an infinitely differentiable bounded function with compact support in $\RR^n$. Let $F(X_1,\dots,X_n)\in\ZZ[X_1,\dots,X_n]$. Let $Q>1$ and $q\in\ZZ^+$ and
    \begin{align*}
        S_q(\mathbf{c})=\sum_{\substack{a\bmod q\\ (a,q)=1}}\sum_{\substack{\mathbf{b}\bmod {q}}}e\mleft(\frac{aF_N(\mathbf{b})+\mathbf{c\cdot b}}{q}\mright)
    \end{align*}
    and
    \begin{align*}
        J_q(\mathbf{c})=\int_{\RR^n} W\mleft(P^{-1}{\mathbf{x}}{}\mright)h(Q^{-1}q,Q^{-2}F(\mathbf{x}))~e\mleft(-\frac{\mathbf{c\cdot x}}{q}\mright)d\mathbf{x}.
    \end{align*}
    Then
    \begin{align*}
        \sum_{\substack{\mathbf{x}\in\ZZ^n\\ F(\mathbf{x})=0}}W(P^{-1}\mathbf{x})=c_Q Q^{-2}\sum_{\mathbf{c}\in\ZZ^n}\sum_{q\geq1}q^{-n} S_q(\mathbf{c})J_q(\mathbf{c}),
    \end{align*}
    where $c_Q$ satisfies $c_Q=1+O_A(Q^{-A})$ for any $A>0$ and $h\in C^{\infty}(\RR^+,\RR)$ is as defined in \cite[pg.166]{heath_brown_circle_method}.
\end{prop}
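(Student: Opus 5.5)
The plan is to reproduce Heath-Brown's derivation, which has two stages. First, establish a smooth $\delta$-symbol expansion of Duke--Friedlander--Iwaniec type. Second, substitute $m=F(\mathbf{x})$ into it and apply Poisson summation in $\mathbf{x}$ along residue classes modulo $q$. (I read $F_N$ in the definition of $S_q(\mathbf{c})$ as the polynomial $F$ itself.)

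\textbf{Step 1: the $\delta$-expansion.} Fix a smooth bump $\omega$ supported in $[1/2,1]$ with $\int\omega=1$, and put $w(x)=\omega(x/Q)$ up to normalization. For $m\neq0$, every factorization $|m|=qd$ pairs the divisors $q$ and $|m|/q$, so
\begin{align*}
\sum_{q\mid m}\bigl(w(q)-w(|m|/q)\bigr)=0,
\end{align*}
while for $m=0$ this sum equals $\sum_{q\geq1} w(q)$. Dividing by $\sum_q w(q)$ therefore produces $\delta_{m,0}$, and this normalizing constant gives $c_Q$. Poisson summation on $\sum_q\omega(q/Q)$ yields $c_Q=1+O_A(Q^{-A})$.

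Next, detect $q\mid m$ by $q^{-1}\sum_{a\bmod q}e(am/q)$ and reorganize the additive characters into reduced fractions $a/q$ with $(a,q)=1$. This gives
\begin{align*}
\delta_{m,0}=c_QQ^{-2}\sum_{q\geq1}\sum_{\substack{a\bmod q\\(a,q)=1}}e\mleft(\frac{am}{q}\mright)h\mleft(\frac{q}{Q},\frac{m}{Q^2}\right),
\end{align*}
where
\begin{align*}
h(x,y)=\sum_{j\geq1}\frac{1}{xj}\Bigl(\omega(xj)-\omega\bigl(|y|/(xj)\bigr)\Bigr).
\end{align*}
Here $h$ is smooth and $h(x,y)=0$ for $x\gg 1+|y|$. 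This is exactly the $h$ of \cite[pg.166]{heath_brown_circle_method}.

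\textbf{Step 2: insert the equation.} Multiply the expansion by $W(P^{-1}\mathbf{x})$ with $m=F(\mathbf{x})$ and sum over $\mathbf{x}\in\ZZ^n$. Because $W$ has compact support, $F(\mathbf{x})\ll_P 1$ on that support. The vanishing of $h$ then restricts $q$ to a finite range, so all sums are finite and may be interchanged freely.

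\textbf{Step 3: Poisson summation.} Write $\mathbf{x}=\mathbf{b}+q\mathbf{z}$ with $\mathbf{b}\bmod q$. Since $e(aF(\mathbf{x})/q)=e(aF(\mathbf{b})/q)$ for $F$ with integer coefficients, the phase depends only on $\mathbf{b}$. Apply Poisson summation in $\mathbf{z}$ to the Schwartz function
\begin{align*}
\mathbf{z}\mapsto W\bigl(P^{-1}(\mathbf{b}+q\mathbf{z})\bigr)\,h\bigl(Q^{-1}q,\,Q^{-2}F(\mathbf{b}+q\mathbf{z})\bigr).
\end{align*}
The substitution $\mathbf{y}=\mathbf{b}+q\mathbf{z}$ turns each Fourier coefficient into $q^{-n}e(\mathbf{c\cdot b}/q)J_q(\mathbf{c})$. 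Summing over $a$ and $\mathbf{b}$ then produces $S_q(\mathbf{c})$, which gives the stated identity.

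\textbf{Main obstacle.} The delicate part is Step 1. One must verify that $h$ is genuinely smooth with the support property, which controls the finiteness of the $q$-sum, and that $c_Q=1+O_A(Q^{-A})$. Once this is in place, the only remaining analytic point is that $\sum_{\mathbf{c}}$ converges absolutely. For fixed $q$ this follows from repeated integration by parts in $J_q(\mathbf{c})$, giving rapid decay in $|\mathbf{c}|$. That decay justifies the Poisson step and the interchange of the $\mathbf{c}$- and $q$-sums.
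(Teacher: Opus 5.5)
Your proposal is correct and takes essentially the same route as Heath-Brown's proof, which the paper cites rather than reproves and mirrors in its proof of Proposition \ref{gen_of_thm_2_series}: the $\delta$-expansion of Heath-Brown's Theorem 1 (which you re-derive rather than cite), then splitting $\mathbf{x}$ into residue classes modulo $q$ and applying Poisson summation to a smooth compactly supported function. Your reading of $F_N$ as $F$ in $S_q(\mathbf{c})$ correctly fixes a typo in the statement.
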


We will not need to modify the function $h$ whatsoever, so we omit its discussion and analysis. It is essential to point out that for a particular choice of weight function $W$, the weighted counting function $\sum_{\substack{\mathbf{x}: F(\mathbf{x})=0}}W(P^{-1}\mathbf{x})$ is equal to the unweighted counting function $\sum_{\mathbf{x}:F(\mathbf{x})=0}1$. This is how Heath-Brown recovers formula \eqref{pos_def_form_formula}, which was previously attained by standard versions of the circle method. Imposing the condition $\mathbf{x}\equiv\mathbf{k}\pmod{M}$ on these sums with the same weight function will lead us to the desired analog of Proposition \ref{basis_of_heath_brown}, giving a formula for $r_n^{\equiv}(N)$. Since we are not handling weighted counting functions, the reason we opt for the Heath-Brown circle method is that modifying Proposition \ref{basis_of_heath_brown} by imposing the condition $\mathbf{x}\equiv\mathbf{k}\pmod{M}$ is straightforward and essentially all of the resulting estimates hold similarly to the unmodified case. It is the purpose of this section, starting with Subsection \ref{subsection_estimating_I_q}, to verify this. Before stating the analog of Proposition \ref{basis_of_heath_brown}, we set important notation.

Let $F_N(\mathbf{x})=\sum_{i=1}^{n}x_i^2-N$ and $w_0(x)$ be defined by $w_0(x)=\exp(-(1-x^2)^{-1})$ if $|x|<1$ and $w_0(x)=0$ if $|x|\geq1$. Then define $w(\mathbf{x})=e\cdot w_0(2(F_0(\mathbf{x})-1))$. This is the weight function that makes our weighted counting function equal to the unweighted counting function, and is given in \cite[pg.203]{heath_brown_circle_method}. Set $P=\sqrt{N}$. For our purposes, we will take $Q=P$ but continue to use the symbol $Q$ as a relic of the more general setting of dealing with forms of degree $d$ (wherein one should take $Q=P^{d/2}$).

\begin{prop}\label{gen_of_thm_2_series}
    Let $Q>1$ and $q\in\ZZ^+$ and
    \begin{align*}
        S_q^{\equiv}(\mathbf{c})=\sum_{\substack{a\bmod q\\ (a,q)=1}}\sum_{\substack{\mathbf{b}\bmod {Mq}\\ \mathbf{b}\equiv \mathbf{k}\bmod M}}e\mleft(\frac{aF_N(\mathbf{b})}{q}+\frac{\mathbf{c\cdot b}}{Mq}\mright)
    \end{align*}
    and
    \begin{align*}
        I_q^{\equiv}(\mathbf{c})=\int_{\RR^n} w\mleft(P^{-1}{\mathbf{x}}{}\mright)h(Q^{-1}q,Q^{-2}F_N(\mathbf{x}))~e\mleft(-\frac{\mathbf{c\cdot x}}{Mq}\mright)d\mathbf{x}.
    \end{align*}
    Then
    \begin{align}\label{series_formula_for_r_cong}
        r_{n}^{\equiv}(N)=c_Q Q^{-2}\sum_{\mathbf{c}\in\ZZ^n}\sum_{q\geq1}(Mq)^{-n} S_q^{\equiv}(\mathbf{c})I_q^{\equiv}(\mathbf{c}),
    \end{align}
    where $c_Q$ satisfies $c_Q=1+O_A(Q^{-A})$ for any $A>0$ and $h\in C^{\infty}(\RR^+,\RR)$ is as defined in \cite[pg.166]{heath_brown_circle_method}.
\end{prop}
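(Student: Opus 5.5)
The plan is to follow the derivation of Proposition \ref{basis_of_heath_brown} in \cite{heath_brown_circle_method}, inserting the congruence condition at the start and handling it by Poisson summation over a residue class modulo $Mq$ instead of modulo $q$.

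First, with $P=\sqrt N$ and $w$ the weight from \cite[pg.203]{heath_brown_circle_method}, every $\mathbf{x}$ with $F_N(\mathbf{x})=0$ satisfies $F_0(P^{-1}\mathbf{x})=1$. Hence $w(P^{-1}\mathbf{x})=e\cdot w_0(0)=1$ on solutions, and
\begin{align*}
r_n^{\equiv}(N)=\sum_{\substack{\mathbf{x}\in\ZZ^n\\ \mathbf{x}\equiv\mathbf{k}\bmod M}}w(P^{-1}\mathbf{x})\,\delta_{F_N(\mathbf{x}),0}.
\end{align*}
Next I apply Heath-Brown's version of the $\delta$-expansion \cite[Theorem 1]{heath_brown_circle_method}:
\begin{align*}
\delta_{m,0}=c_QQ^{-2}\sum_{q\ge1}\sum_{\substack{a\bmod q\\(a,q)=1}}e(am/q)\,h(q/Q,m/Q^2),
\end{align*}
with $m=F_N(\mathbf{x})$. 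Since $w$ has compact support, only finitely many $\mathbf{x}$ contribute, and $h(x,y)$ vanishes for $x$ large. So all sums are finite and may be interchanged, giving
\begin{align*}
c_QQ^{-2}\sum_{q}\sum_{(a,q)=1}\sum_{\mathbf{x}\equiv\mathbf{k}\,(M)}w(P^{-1}\mathbf{x})h(q/Q,F_N(\mathbf{x})/Q^2)e(aF_N(\mathbf{x})/q).
\end{align*}

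The key step is the inner sum over $\mathbf{x}$. The phase $e(aF_N(\mathbf{x})/q)$ depends only on $\mathbf{x}\bmod q$, and the congruence depends only on $\mathbf{x}\bmod M$. So I write $\mathbf{x}=\mathbf{b}+Mq\mathbf{z}$, where $\mathbf{b}$ runs modulo $Mq$ with $\mathbf{b}\equiv\mathbf{k}\bmod M$ and $\mathbf{z}\in\ZZ^n$. Then $F_N(\mathbf{x})\equiv F_N(\mathbf{b})\bmod q$. Setting $g(\mathbf{x})=w(P^{-1}\mathbf{x})h(q/Q,F_N(\mathbf{x})/Q^2)$, the inner sum becomes $\sum_{\mathbf{b}}e(aF_N(\mathbf{b})/q)\sum_{\mathbf{z}}g(\mathbf{b}+Mq\mathbf{z})$. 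Poisson summation in $\mathbf{z}$ gives
\begin{align*}
\sum_{\mathbf{z}}g(\mathbf{b}+Mq\mathbf{z})=(Mq)^{-n}\sum_{\mathbf{c}\in\ZZ^n}e\Big(\frac{\mathbf{c}\cdot\mathbf{b}}{Mq}\Big)\int_{\RR^n}g(\mathbf{x})e\Big(-\frac{\mathbf{c}\cdot\mathbf{x}}{Mq}\Big)d\mathbf{x}.
\end{align*}
The integral is exactly $I_q^{\equiv}(\mathbf{c})$. Collecting the sums over $a$ and $\mathbf{b}$ yields $S_q^{\equiv}(\mathbf{c})$, which gives \eqref{series_formula_for_r_cong}.

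The main obstacle is justifying Poisson summation and the final rearrangement of the $\mathbf{c}$- and $q$-sums. For fixed $q$, $g$ is smooth and compactly supported: $h$ is smooth for $x>0$ and $w$ is compactly supported. So its Fourier transform decays rapidly in $\mathbf{c}$, and the Poisson step is valid for each $q$. Only finitely many $q$ contribute, because $h(x,y)=0$ for $x$ beyond a fixed bound, so $q\ll Q$ (this follows from the properties of $h$ in \cite[pg.166]{heath_brown_circle_method}). The double series is therefore absolutely convergent and the order of summation is immaterial. Exactly as in \cite[Theorem 2]{heath_brown_circle_method}, the singular behaviour of $h$ as $x\to0$ causes no trouble, since $q\ge1$ and $Q$ is fixed.
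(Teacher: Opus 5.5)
Your proposal is correct and follows the paper's proof essentially verbatim: you use that the weight $w(P^{-1}\mathbf{x})$ equals $1$ on solutions, expand $\delta_{F_N(\mathbf{x}),0}$ via \cite[Theorem 1]{heath_brown_circle_method}, split $\mathbf{x}=\mathbf{b}+Mq\mathbf{z}$ over residues $\mathbf{b}\bmod Mq$ with $\mathbf{b}\equiv\mathbf{k}\bmod M$, and apply Poisson summation in $\mathbf{z}$. Your justification of the interchanges (only finitely many $q$ contribute because $h(x,y)=0$ for $x\geq\max(1,2|y|)$ with $y$ bounded on the support of $w$, and $g$ is smooth and compactly supported for each fixed $q$) is more careful than the paper's, and your Fourier transform correctly carries $e(-\mathbf{c}\cdot\mathbf{z})$ and $w(P^{-1}\cdot)$, where the paper's displayed $\hat{f}$ has small typos.
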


\begin{proof}
    The proof of this theorem is a straightforward generalization of the proof of \cite[Theorem 2]{heath_brown_circle_method} for our choice of weight function. Let $\delta_{m,0}$ be the Kronecker delta. Note that $w(P^{-1}\mathbf{x})=e\cdot w_0(2(N^{-1}\sum_{i=1}^{n}x_i^2-1))$ equals $1$ if and only if $F_N(\mathbf{x})=0$. Thus,
    \begin{align*}
        r_n^{\equiv}(N)=\sum_{\substack{\mathbf{x}\in\ZZ^n\\ F_N(\mathbf{x})=0\\ \mathbf{x}\equiv \mathbf{k} \bmod M}}w(P^{-1}\mathbf{x})=\sum_{\substack{\mathbf{x}\in\ZZ^n\\ \mathbf{x}\equiv \mathbf{k} \bmod M}}w(P^{-1}\mathbf{x})\delta_{F_N(\mathbf{x}),0}.
    \end{align*}
    Invoking \cite[Theorem 1]{heath_brown_circle_method}, we obtain
    \begin{align*}
        r_n^{\equiv}(N)&=\sum_{\substack{\mathbf{x}\in\ZZ^n\\ \mathbf{x}\equiv \mathbf{k} \bmod M}}w(P^{-1}\mathbf{x})\left(c_Q Q^{-2}\sum_{q\geq1}\sum_{\substack{a\bmod q\\ (a,q)=1}}e\mleft(\frac{aF_N(\mathbf{x})}{q}\mright)h(Q^{-1}q,Q^{-2}F_N(\mathbf{x}))\right)\\
        &=c_Q Q^{-2}\sum_{q\geq1}\sum_{\substack{a\bmod q\\ (a,q)=1}}\sum_{\substack{\mathbf{x}\in\ZZ^n\\ \mathbf{x}\equiv \mathbf{k} \bmod M}}w(P^{-1}\mathbf{x}) e\mleft(\frac{aF_N(\mathbf{x})}{q}\mright)h(Q^{-1}q,Q^{-2}F_N(\mathbf{x}))\\
        &=c_Q Q^{-2}\sum_{q\geq1}\sum_{\substack{a\bmod q\\ (a,q)=1}}\sum_{\substack{\mathbf{b}\bmod Mq\\ \mathbf{b}\equiv \mathbf{k} \bmod M}}e\mleft(\frac{aF_N(\mathbf{b})}{q}\mright)\sum_{\mathbf{y}\in\ZZ^n}f(\mathbf{y}),
    \end{align*}
    where $f(\mathbf{y})=w(\mathbf{b}+Mq\mathbf{y})h(Q^{-1}q,Q^{-2}F_N(\mathbf{b}+Mq\mathbf{y}))$. The Poisson summation formula yields $\sum_{\mathbf{y}\in\ZZ^n}f(\mathbf{y})=\sum_{\mathbf{c}\in\ZZ^n}\hat{f}(\mathbf{c})$, where
    \begin{align*}
        \hat{f}(\mathbf{c})&=\int_{\RR^n}f(\mathbf{y})e(-\mathbf{c\cdot b})d\mathbf{y}=\int_{\RR^n}w(\mathbf{b}+Mq\mathbf{y})h(Q^{-1}q,Q^{-2}F_N(\mathbf{b}+Mq\mathbf{y}))e(-\mathbf{c\cdot b})d\mathbf{y}\\
        &=e\mleft(\frac{\mathbf{c\cdot b}}{Mq}\mright)\frac{1}{(Mq)^n}\int_{\RR^n} w(P^{-1}\mathbf{x})h(Q^{-1}q,Q^{-2}F_N(\mathbf{x}))e\mleft(-\frac{\mathbf{c\cdot x}}{Mq}\mright)d\mathbf{x}.
    \end{align*}
    Substituting back into our original equation then gives the desired formula.
\end{proof}

To get an analogous form of \eqref{pos_def_form_formula} for $r_n^{\equiv}(N)$, we directly estimate \eqref{series_formula_for_r_cong}. The leading-order term comes from the terms of \eqref{series_formula_for_r_cong} with $\mathbf{c}=\mathbf{0}$ and $q\leq QP^{-\varepsilon}$. In this regime, $I_q^{\equiv}(\mathbf{0})$ grows like $P^n$ times the singular integral, and as expected, the sum of $S_q^{\equiv}(\mathbf{0})$ may be approximated by the singular series (the error of this approximation by the singular series will be absorbed into the $O(N^{(n-1)/4+\varepsilon})$ error). When $q>QP^{-\varepsilon}$, the contribution is negligible for all $\mathbf{c}$, as is the contribution when $\mathbf{c}\neq\mathbf{0}$ (for all $q$), where our modification is more consequential. Nevertheless, we demonstrate that the same error estimates from \cite{heath_brown_circle_method} still hold.

\subsection{Estimating $I^{\equiv}_q(\mathbf{c})$}\label{subsection_estimating_I_q}

Let $I_q(\mathbf{c})=\int_{\RR^n} w\mleft(P^{-1}{\mathbf{x}}{}\mright)h(Q^{-1}q,Q^{-2}F_N(\mathbf{x}))e\mleft(-{\mathbf{c\cdot x}}/q\mright)d\mathbf{x}$, where $q\in\ZZ^+$ and $\mathbf{c}\in\ZZ^n$, as defined in \cite{heath_brown_circle_method}. By the relation $I_q^{\equiv}(\mathbf{c})=I_q(\mathbf{c}/M)$, the necessary asymptotic statements for $I_q(\mathbf{c})$ given in \cite{heath_brown_circle_method} hold immediately for $I_q^{\equiv}(\mathbf{c})$. For the sake of completeness and reference, we write these statements explicitly in this subsection anyway. Crucially, we note that the vector argument of $I_q$ being integral is not necessary for the analysis, and it is thus acceptable to apply Heath-Brown's results to the argument $\mathbf{c}/M$. Since the leading-order term of \eqref{series_formula_for_r_cong} comes from the terms with $\mathbf{c}=\mathbf{0}$ and $q\leq QP^{-\varepsilon}$, the simple relation between $I_q^{\equiv}$ and $I_q$ explains why the singular integral is invariant under the congruence modification: for $q\ll Q$, the singular integral arises as the coefficient of the leading-order term of $I_q^{\equiv}(\mathbf{0})=I_q(\mathbf{0})$.

The following lemma allows us to reduce the series over all $q\in\ZZ^+$ in \eqref{series_formula_for_r_cong} to $q<P$, which proves essential for a dyadic summation argument that we borrow from \cite[pg.200]{heath_brown_circle_method}.

\begin{lemma}
    Let $q\geq P$ and $\mathbf{c}\in\ZZ^n$. Then $I_q^{\equiv}(\mathbf{c})=0$.
\end{lemma}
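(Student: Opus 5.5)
The plan is to show that the integrand of $I_q^{\equiv}(\mathbf{c})$ vanishes identically when $q\geq P$. The factor $e(-\mathbf{c\cdot x}/(Mq))$ has modulus one and plays no role. So it suffices to show that, for every $\mathbf{x}$ in the support of $w(P^{-1}\mathbf{x})$, we have $h(Q^{-1}q,Q^{-2}F_N(\mathbf{x}))=0$. Since $I_q^{\equiv}(\mathbf{c})=I_q(\mathbf{c}/M)$ and the argument below never uses the frequency vector, this is the same as Heath-Brown's vanishing statement for $I_q$ with $W=w$. I would nevertheless write it out directly.

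\textbf{Step 1 (support of the weight).} By definition, $w(P^{-1}\mathbf{x})=e\cdot w_0(2(N^{-1}F_0(\mathbf{x})-1))$, and $w_0$ vanishes outside $(-1,1)$. Hence $w(P^{-1}\mathbf{x})\neq0$ forces $|N^{-1}F_0(\mathbf{x})-1|<1/2$, i.e.\ $|F_N(\mathbf{x})|<N/2$. With $Q=P=\sqrt N$, this gives
\begin{align*}
|y|<\tfrac12, \qquad \text{where } y\defeq Q^{-2}F_N(\mathbf{x}),
\end{align*}
on the whole support of the integrand.

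\textbf{Step 2 (vanishing of $h$).} Set $x\defeq Q^{-1}q$, so $x\geq1$ since $q\geq P=Q$. I would quote the definition of $h$ from \cite[pg.166]{heath_brown_circle_method}:
\begin{align*}
h(x,y)=\sum_{j\geq1}\frac{1}{xj}\left(\omega_0(xj)-\omega_0\mleft(\frac{|y|}{xj}\mright)\right),
\end{align*}
where $\omega_0$ is Heath-Brown's bump function supported in $(1/2,1)$. For $x\geq1$ and $j\geq1$ we have $xj\geq1$, so $\omega_0(xj)=0$. Also $|y|/(xj)\leq|y|<1/2$, so $\omega_0(|y|/(xj))=0$. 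Every term vanishes, hence $h(x,y)=0$. Equivalently, one can cite Heath-Brown's stated property that $h(x,y)$ vanishes unless $x\leq\max(1,2|y|)$, but that formulation needs care at the boundary $x=1$.

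\textbf{Main obstacle.} The only delicate point is this boundary case $q=P$ exactly. The inequality $x\leq\max(1,2|y|)$ is then not violated, so the soft statement does not suffice. One must use the explicit support $(1/2,1)$ of the bump function, which is open at $1$, together with the strict inequality $|y|<1/2$ from Step 1. With that in hand, the integrand is identically zero and $I_q^{\equiv}(\mathbf{c})=0$ for all $\mathbf{c}\in\ZZ^n$.
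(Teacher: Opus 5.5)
Your proof is correct and is essentially the paper's argument: off the support of $w(P^{-1}\mathbf{x})$ the integrand vanishes, and on that support $|Q^{-2}F_N(\mathbf{x})|\le 1/2\le (q/Q)/2$ with $q/Q\ge 1$, so $h$ vanishes there. The only difference is that you verify the vanishing of $h$ directly from its series definition, which cleanly covers the boundary case $q=P$; the paper instead cites Heath-Brown's Lemma 4 in the form ``$h(x,y)=0$ for $x\ge 1$ and $|y|\le x/2$,'' which also covers $x=1$.
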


\begin{proof}
    Since $q\geq P$, note that $q/Q=q/P\geq 1$. Suppose that $|N^{-1}\sum_{i=1}^{n}x_i^2-1|\leq 1/2$. Then
    \begin{align*}
        |Q^{-2}F_N(\mathbf{x})|=\left|\frac{1}{N}\sum_{i=1}^{n}x_i^2-1\right|\leq 1/2\leq\frac{q/Q}{2},
    \end{align*}
    whence $h(Q^{-1}q,Q^{-2}F_N(\mathbf{x}))=0$ by \cite[Lemma 4]{heath_brown_circle_method}. On the other hand, if $|N^{-1}\sum_{i=1}^{n}x_i^2-1|>1/2$, then $w(P^{-1}\mathbf{x})=e\cdot w_0(2(N^{-1}\sum_{i=1}^{n}x_i^2-1))=0$. So $I^{\equiv}_q(\mathbf{c})=0$.
\end{proof}

Let $G(\mathbf{x})=F_0(\mathbf{x})-1$. As noted by Heath-Brown \cite[pg.204]{heath_brown_circle_method}, we remark that since $w(\mathbf{x})=1+O(\varepsilon)$ for $|G(\mathbf{x})|\leq\varepsilon$,
\begin{align*}
    \sigma_{\infty}(G,w)\defeq\lim_{\varepsilon\to0}\frac{1}{2\varepsilon}\int_{|G(\mathbf{x})|\leq\varepsilon} w(\mathbf{x})d\mathbf{x}=\lim_{\varepsilon\to0}\frac{1}{2\varepsilon}\int_{|G(\mathbf{x})|\leq\varepsilon} d\mathbf{x}\eqdef \sigma_{\infty}(G).
\end{align*}
This is the singular integral. By the equality of $\sigma_{\infty}(G,w)$ and $\sigma_{\infty}(G)$, we have the following estimate by {\cite[Lemma 13]{heath_brown_circle_method}}. 

\begin{lemma}[{\cite[Lemma 13]{heath_brown_circle_method}}]\label{I_q_0_cong_est}
    Suppose that $q\ll Q$. Then for any $A>0$,
    \begin{align*}
        I^{\equiv}_q(\mathbf{0})=I_q(\mathbf{0})=P^n\left(\sigma_{\infty}(G)+O_A\left(\left(q/Q\right)^A\right)\right).
    \end{align*}
\end{lemma}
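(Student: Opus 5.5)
The first equality $I^{\equiv}_q(\mathbf{0})=I_q(\mathbf{0})$ is immediate from the definitions. Setting $\mathbf{c}=\mathbf{0}$ removes the oscillatory factor $e(-\mathbf{c\cdot x}/(Mq))$, which is the only place the modulus $M$ enters $I_q^{\equiv}$. More generally we have $I_q^{\equiv}(\mathbf{c})=I_q(\mathbf{c}/M)$. So the content of the lemma is the asymptotic for $I_q(\mathbf{0})$, and the plan is to reduce it to \cite[Lemma 13]{heath_brown_circle_method}.

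To do this, rescale $\mathbf{x}=P\mathbf{y}$. Since $P=\sqrt{N}$, this gives $F_N(P\mathbf{y})=N\,G(\mathbf{y})=Q^2G(\mathbf{y})$ with $G=F_0-1$, and hence
\begin{align*}
    I_q(\mathbf{0})=P^n\int_{\RR^n}w(\mathbf{y})\,h\mleft(Q^{-1}q,G(\mathbf{y})\mright)d\mathbf{y}.
\end{align*}
This is exactly the normalized integral treated in \cite[Lemma 13]{heath_brown_circle_method} with $r=q/Q\ll1$. That lemma yields $\sigma_\infty(G,w)+O_A((q/Q)^A)$ for weights of the class $\mathscr{C}(S)$ considered there. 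The underlying mechanism is as follows. One writes the integral as $\int h(r,t)\,\Phi(t)\,dt$, where $\Phi(t)$ is the $w$-weighted measure of the level set $G=t$. Here $\Phi$ is smooth near $t=0$, because $\nabla G=2\mathbf{y}\neq0$ on the support of $w$, which lies in $1/2\le F_0\le 3/2$. One then uses the moment properties of $h$: it satisfies $\int h(r,t)\,dt=1+O_A(r^A)$, and $\int h(r,t)t^j\,dt$ is suitably small. Taylor expanding $\Phi$ around $0$ gives $\Phi(0)+O_A(r^A)$.

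It remains to identify $\Phi(0)=\sigma_\infty(G,w)$ with $\sigma_\infty(G)$. As remarked just before the lemma, $w=1+O(\varepsilon)$ on $|G|\le\varepsilon$, so the two limits agree. Substituting this into the displayed formula gives the claim.

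The main obstacle is confirming that our specific weight $w(\mathbf{x})=e\cdot w_0(2(F_0(\mathbf{x})-1))$ satisfies the hypotheses of \cite[Lemma 13]{heath_brown_circle_method}, namely smoothness, compact support, and $\nabla F_0$ bounded away from $0$ on the support. This is the choice Heath-Brown himself makes on \cite[pg.203]{heath_brown_circle_method}, so I expect it to go through by citation.
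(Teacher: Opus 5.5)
Your proposal is correct and follows the paper's route. The paper likewise notes $I_q^{\equiv}(\mathbf{0})=I_q(\mathbf{0})$ as the $\mathbf{c}=\mathbf{0}$ case of $I_q^{\equiv}(\mathbf{c})=I_q(\mathbf{c}/M)$, cites \cite[Lemma 13]{heath_brown_circle_method} for $P^n(\sigma_\infty(G,w)+O_A((q/Q)^A))$, and replaces $\sigma_\infty(G,w)$ by $\sigma_\infty(G)$ using $w=1+O(\varepsilon)$ on $|G|\le\varepsilon$; your rescaling $\mathbf{x}=P\mathbf{y}$ and your sketch of the mechanism behind that lemma are accurate elaborations that the paper leaves implicit.
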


We digress that we can compute $\sigma_{\infty}(G)$ explicitly, though we do not need to for our purposes. Indeed, the integral in the definition of $\sigma_{\infty}(G)$ is simply the difference of the volumes of two $n$-dimensional balls of radii $\sqrt{1+\varepsilon}$ and $\sqrt{1-\varepsilon}$. Since $(2\varepsilon)^{-1}((1+\varepsilon)^{n/2}-(1-\varepsilon)^{n/2})$ tends to $n/2$ as $\varepsilon\to0$, we have
\begin{align*}
    \sigma_{\infty}(G)=\frac{n}{2}\frac{\pi^{n/2}}{\Gamma(n/2+1)}=\frac{\pi^{n/2}}{\Gamma(n/2)},
\end{align*}
recovering the constant given in \eqref{classic_asymptotic_formula_r_square} and as noted by Bateman \cite{bateman}, for example. This differs from the constant $\Gamma(1+1/2)^{n}/\Gamma(n/2)$ seen in Hua's result \cite{hua} on Waring's problem (regarding squares in this case) by a factor of $2^n$, which comes from the fact that we are counting solutions over $\ZZ^n$ rather than $\NN_0^n$. A more recent illustration of Hua's result may be seen in \cite[\S 20.2]{iwaniec_kowalski}.

When $q\ll Q$, the coefficient of the leading-order term of $I_q^{\equiv}(\mathbf{0})$ ultimately corresponds to the coefficient of the leading-order term of \eqref{series_formula_for_r_cong}, so determining it to be exactly the singular integral was important. However, terms with $q$ outside this range will be absorbed into the error, so it is sufficient to leave the asymptotics of $I_q^{\equiv}(\mathbf{0})$ up to a constant in this case.

\begin{lemma}[{\cite[Lemma 16]{heath_brown_circle_method}}]
    For any $q$, we have the bound
    \begin{align*}
        I^{\equiv}_q(\mathbf{0})=I_q(\mathbf{0})\ll P^n.
    \end{align*}
\end{lemma}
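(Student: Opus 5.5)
The bound $I^{\equiv}_q(\mathbf{0})\ll P^n$ for every $q$ will follow in two steps. First we identify $I^{\equiv}_q(\mathbf{0})$ with $I_q(\mathbf{0})$. Then we bound the integral directly, using the size of the support of $w$ together with a uniform bound for $h$.

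The identification is immediate. Setting $\mathbf{c}=\mathbf{0}$ in the definition of $I_q^{\equiv}$ kills the exponential factor $e(-\mathbf{c\cdot x}/(Mq))$, so $M$ disappears and
\[
I^{\equiv}_q(\mathbf{0})=\int_{\RR^n} w(P^{-1}\mathbf{x})\,h(Q^{-1}q,Q^{-2}F_N(\mathbf{x}))\,d\mathbf{x}=I_q(\mathbf{0}).
\]
This is just the relation $I_q^{\equiv}(\mathbf{c})=I_q(\mathbf{c}/M)$ at $\mathbf{c}=\mathbf{0}$.

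For the bound, split according to the size of $q$. If $q\geq P$, the preceding lemma gives $I^{\equiv}_q(\mathbf{0})=0$, and there is nothing to prove. If $q<P$, substitute $\mathbf{x}=P\mathbf{y}$. This produces the factor $P^n$ and leaves
\[
\int_{\RR^n} w(\mathbf{y})\,h(q/Q,\,G(\mathbf{y}))\,d\mathbf{y},
\]
since $Q^{-2}F_N(P\mathbf{y})=F_0(\mathbf{y})-1=G(\mathbf{y})$ when $Q=P=\sqrt N$. The support of $w$ lies in the shell $1/2\leq F_0(\mathbf{y})\leq 3/2$, which is bounded, and $w$ is bounded. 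It therefore remains to control $h$ on that shell.

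Controlling $h$ is the main obstacle, because $h(x,y)$ is not uniformly bounded as $x\to0$. The bound I would use from \cite[Lemma 4]{heath_brown_circle_method} is $h(x,y)\ll x^{-1}\min\{1,x^{-?}\}$; concretely, $h(x,y)\ll x^{-1}$, and $h$ is negligible unless $|y|\ll x$. With $x=q/Q$, this means the integrand is $\ll Q/q$ and is supported where $|G(\mathbf{y})|\ll q/Q$. The coarea formula, or simply the fact that the shell $\{|F_0(\mathbf{y})-1|\leq t\}$ has volume $\ll t$, bounds that region's volume by $O(q/Q)$. The two factors cancel, so the integral is $O(1)$ uniformly in $q$. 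For the tail where $|y|\gg x$, the $O_A(x^{-1}(x/|y|)^{A})$-type decay from Heath-Brown's Lemma 4 makes the contribution again $O(1)$ after integrating $|G|$ over the bounded shell.

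Multiplying back the factor $P^n$ gives $I^{\equiv}_q(\mathbf{0})=I_q(\mathbf{0})\ll P^n$, as claimed. Alternatively, since this is exactly the content of \cite[Lemma 16]{heath_brown_circle_method} applied to our weight $w$ (which lies in the required class $\mathscr{C}(S)$), one can cite that lemma directly once the identification above has been made.
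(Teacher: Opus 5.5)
Your proposal is correct and follows the paper, which likewise just notes that $I_q^{\equiv}(\mathbf{0})=I_q(\mathbf{0})$ (the case $\mathbf{c}=\mathbf{0}$ of $I_q^{\equiv}(\mathbf{c})=I_q(\mathbf{c}/M)$) and then cites Heath-Brown's Lemma 16. Your direct estimate simply unpacks that lemma. It goes through once the garbled bound is replaced by the precise one, $h(x,y)\ll_A x^{-1}\{x^A+\min\{1,(x/|y|)^A\}\}$, whose $x^A$ term is dominated because $|G|<1/2$ on the support of $w$, combined with the shell-volume bound $\ll t$.
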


The next two lemmas handle bounding $I_q^{\equiv}(\mathbf{c})$ when $\mathbf{c}\neq\mathbf{0}$ (with no restriction on $q$). The first will be used when $|\mathbf{c}|>P^{\varepsilon}$, wherein having the savings factor $|\mathbf{c}|^{-A}$ for all $A>0$ is essential. 

\begin{lemma}[{\cite[Lemma 19]{heath_brown_circle_method}}]
    Suppose that $\mathbf{c}\neq0$. Then for any $A>0$,
    \begin{align*}
        I^{\equiv}_q(\mathbf{c})=I_q(\mathbf{c}/M)\ll_A P^{n+1}q^{-1}|\mathbf{c}/M|^{-A}\ll P^{n+1}q^{-1}|\mathbf{c}|^{-A}.
    \end{align*}
\end{lemma}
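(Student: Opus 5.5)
The statement to prove is the bound $I^{\equiv}_q(\mathbf{c})\ll_A P^{n+1}q^{-1}|\mathbf{c}|^{-A}$ for $\mathbf{c}\neq\mathbf{0}$. The plan is to reduce it entirely to Heath-Brown's \cite[Lemma 19]{heath_brown_circle_method} through the identity $I_q^{\equiv}(\mathbf{c})=I_q(\mathbf{c}/M)$. This identity is immediate from the definitions: the integrand of $I_q^{\equiv}(\mathbf{c})$ carries the phase $e(-\mathbf{c\cdot x}/(Mq))$, which equals $e(-(\mathbf{c}/M)\cdot\mathbf{x}/q)$, and that is exactly the phase of $I_q$ evaluated at the vector $\mathbf{c}/M$. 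So the first step is just to record $I_q^{\equiv}(\mathbf{c})=I_q(\mathbf{c}/M)$.

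The second step is to verify that Heath-Brown's proof of Lemma 19 never uses integrality of $\mathbf{c}$. His argument does the following:
\begin{enumerate}
\item Rescale $\mathbf{x}=P\mathbf{y}$, so that $I_q(\mathbf{c})=P^n\int w(\mathbf{y})h(r,G(\mathbf{y}))e(-\mathbf{v\cdot y})\,d\mathbf{y}$, with $r=q/Q$ and $\mathbf{v}=P\mathbf{c}/q$.
\item Integrate by parts repeatedly in a direction where $|\partial G|\gg1$ on the support of $w$. Each integration by parts gains a factor of $|\mathbf{v}|^{-1}$ and loses at most a factor of $r^{-1}$ from the derivatives of $h$.
\end{enumerate}
This yields a bound of the shape $P^n r^{-1}(r|\mathbf{v}|)^{-A}=P^{n}(Q/q)\,|P\mathbf{c}/Q|^{-A}$. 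With $Q=P$, this is $\ll P^{n+1}q^{-1}|\mathbf{c}|^{-A}$. Every step uses only that $\mathbf{c}$ is a nonzero real vector, and it enters only through $|\mathbf{v}|$. So the bound holds verbatim at $\mathbf{c}/M\in\RR^n\setminus\{\mathbf{0}\}$, giving $I_q(\mathbf{c}/M)\ll_A P^{n+1}q^{-1}|\mathbf{c}/M|^{-A}$.

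The last step converts $|\mathbf{c}/M|^{-A}=M^A|\mathbf{c}|^{-A}$. Since $M$ is fixed, the factor $M^A$ is absorbed into the implied constant, which is allowed to depend on $A$ (and on the fixed $M$).

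The only point needing care, and hence the main obstacle, is confirming that the implied constant in Lemma 19 depends only on $A$, $n$, $F$ and $w$, and not on a lower bound such as $|\mathbf{c}|\geq1$ for integral $\mathbf{c}$. For example, a case split that is used only in the regime $|\mathbf{v}|\ll$ something would be a problem. If some step in Heath-Brown's argument did use $|\mathbf{c}|\geq1$, I would replace it with $|\mathbf{c}/M|\geq1/M$. That substitution costs only a constant power of $M$, so the conclusion is unaffected.
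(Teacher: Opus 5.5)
Your proposal is correct and matches the paper's argument: you record $I_q^{\equiv}(\mathbf{c})=I_q(\mathbf{c}/M)$, note that Heath-Brown's proof of \cite[Lemma 19]{heath_brown_circle_method} never uses integrality of the frequency vector, and absorb $M^A$ into the implied constant. One small correction to your sketch of that proof: the repeated integration by parts is done in a coordinate $y_j$ with $|v_j|\gg|\mathbf{v}|$, so that each step gains $|\mathbf{v}|^{-1}$ from $e(-\mathbf{v}\cdot\mathbf{y})$ while derivatives of $w(\mathbf{y})h(r,G(\mathbf{y}))$ cost at most $r^{-1}$; no lower bound on $|\partial G|$ is needed.
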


For $\mathbf{c}\ll P^{\varepsilon}$, we will use the following lemma. Since the bound is independent of $\mathbf{c}$, the estimation of \eqref{series_formula_for_r_cong} is essentially reduced (using a dyadic summation argument, see \cite[pg.200]{heath_brown_circle_method}) to estimating $\sum_{q\leq X}|S_q^{\equiv}(\mathbf{c})|$, which is handled by Lemma \ref{sum_of_S_qs}.

\begin{lemma}
    Suppose that $\mathbf{c}\neq0$ and $\mathbf{c}\ll P^{\varepsilon}$. Then
    \begin{align*}
        I_q^{\equiv}(\mathbf{c})\ll P^{n/2+1+\varepsilon} q^{n/2-1}.
    \end{align*}
\end{lemma}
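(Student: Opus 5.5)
The lemma should follow from Heath-Brown's corresponding estimate for $I_q(\mathbf{c})$, \cite[Lemma 22]{heath_brown_circle_method}, through the identity $I_q^{\equiv}(\mathbf{c})=I_q(\mathbf{c}/M)$ used in the preceding lemmas. I will not invoke that lemma as a black box, because its hypotheses are stated for integral $\mathbf{c}$. Instead I will check that the proof only uses that $\mathbf{c}/M$ is a nonzero real vector with $|\mathbf{c}/M|\ll P^{\varepsilon}$. Here $M$ is fixed, so $1/M\le|\mathbf{c}/M|\ll P^{\varepsilon}$, and every norm of $\mathbf{c}/M$ is comparable to that of $\mathbf{c}$ up to constants depending on $M$.

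The first step is to rescale. Substituting $\mathbf{x}=P\mathbf{y}$ and using $Q=P$ gives
\begin{align*}
I_q(\mathbf{c}/M)=P^n\int_{\RR^n}w(\mathbf{y})h(r,G(\mathbf{y}))e(-\mathbf{v\cdot y})d\mathbf{y},
\end{align*}
where $r=q/Q$, $\mathbf{v}=P\mathbf{c}/(Mq)$ and $G(\mathbf{y})=F_0(\mathbf{y})-1$. By the earlier lemma we may assume $r<1$.

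The second step follows Heath-Brown and writes $h(r,t)$ through its Fourier transform in $t$:
\begin{align*}
h(r,t)=\int_{\RR}p(r,s)e(st)\,ds,
\end{align*}
with the bound $p(r,s)\ll r^{-1}$ on the range $|s|\ll r^{-1-\varepsilon}$ (roughly) and rapid decay outside it. The integral then becomes
\begin{align*}
\int p(r,s)\int w(\mathbf{y})e(sG(\mathbf{y})-\mathbf{v\cdot y})\,d\mathbf{y}\,ds.
\end{align*}
The inner integral is a nondegenerate quadratic oscillatory integral, since $G$ is a shifted sum of squares. Stationary phase, as in Heath-Brown's Lemma 10, bounds it by $\ll|s|^{-n/2}$ uniformly in $\mathbf{v}$, and trivially by $\ll1$. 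Inserting these bounds and integrating over $|s|\ll r^{-1-\varepsilon}$ gives $\ll r^{-1}\cdot r^{\,n/2-1-\varepsilon}$, up to the nonconvergent small-$s$ region, which is handled by the trivial bound. Here $n\ge4$ is used.

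Tracking the powers, the total is $P^n\cdot r^{1-n/2}P^{\varepsilon}=P^n(Q/q)^{n/2-1}P^{\varepsilon}$. This is not yet the claimed form, so the final step is to use Heath-Brown's sharper argument exploiting $\mathbf{v}\neq\mathbf{0}$. Since $|\mathbf{v}|\gg P/q$ (note $|\mathbf{c}|\ge1$ and $M$ is fixed), the phase $sG(\mathbf{y})-\mathbf{v\cdot y}$ has its stationary point at $\mathbf{y}=\mathbf{v}/(2s)$. This point lies in the support of $w$ only when $|s|\asymp|\mathbf{v}|$; for other $s$, repeated integration by parts gives arbitrary savings. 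So only $|s|\asymp P|\mathbf{c}|/(Mq)$ contributes. On that range the oscillatory integral is $\ll|s|^{-n/2}$ and $p\ll r^{-1}=Q/q$. Integrating over a range of length $\asymp|s|$ gives
\begin{align*}
\ll P^n\cdot\frac{P}{q}\cdot\left(\frac{P|\mathbf{c}|}{q}\right)^{1-n/2}\ll P^{n/2+1+\varepsilon}q^{n/2-1},
\end{align*}
using $|\mathbf{c}|\ll P^{\varepsilon}$ and $|\mathbf{c}|\ge1$. This is exactly the stated bound.

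I expect the main obstacle to be confirming that each step of Heath-Brown's proof of \cite[Lemma 22]{heath_brown_circle_method} is insensitive to $\mathbf{c}/M$ being non-integral. In particular, I need to check the lower bound $|\mathbf{v}|\gg P/q$ and the region where $|s|$ is small relative to $|\mathbf{v}|$. Both should go through with constants depending only on $M$ and $n$, so the final bound is
\begin{align*}
I_q^{\equiv}(\mathbf{c})=I_q(\mathbf{c}/M)\ll P^{n/2+1+\varepsilon}q^{n/2-1}.
\end{align*}
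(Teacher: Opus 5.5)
There is a genuine gap in your last step. On the range $|s|\asymp|\mathbf{v}|$ you use $p(r,s)\ll r^{-1}=Q/q$, the stationary-phase bound $|s|^{-n/2}$, and a range of length $\asymp|\mathbf{v}|$. That gives
\[
P^n\cdot\frac{P}{q}\cdot\left(\frac{P|\mathbf{c}|}{q}\right)^{1-n/2}\asymp P^{n/2+2}q^{n/2-2}|\mathbf{c}|^{1-n/2}.
\]
This exceeds the target $P^{n/2+1+\varepsilon}q^{n/2-1}$ by a factor $\asymp (P/q)|\mathbf{c}|^{1-n/2}P^{-\varepsilon}$. That factor is unbounded, since $q<P$ and $1\le|\mathbf{c}|\ll P^{\varepsilon}$. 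For instance, with $n=4$ and $q=1$ you get $P^4/|\mathbf{c}|\gg P^{4-\varepsilon}$ against $P^{3+\varepsilon}$. So the final ``$\ll$'' silently drops a factor $Q/q$. Your first pass has the same overcount: with $p\ll r^{-1}$ and the trivial bound, the region $|s|\ll 1$ alone is only bounded by $P^n r^{-1}$.

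The missing ingredient is the correct size of $p(r,s)$. The bound $r^{-1}$ is the sup-norm of $h(r,\cdot)$, not of its Fourier transform. Since $G(\mathbf{y})\in(-1/2,1/2)$ on the support of $w$, you may multiply $h(r,\cdot)$ by a smooth cutoff supported in $[-1,1]$ without changing the integral. Then
\[
|p(r,s)|\le\int_{|t|\le 1}|h(r,t)|\,dt\ll 1 .
\]
This holds because $h(r,t)\ll r^{-1}$ for $|t|\le r$, while $h(r,t)\ll_A r^{-1}\bigl(r^A+(r/|t|)^A\bigr)$ for $|t|\ge r$. The latter follows from Poisson summation applied to the two sums defining $h$; it is the sense in which $h(r,\cdot)$ is an approximate delta function of width $r$.

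With $p\ll 1$, your localization to $|s|\asymp|\mathbf{v}|$ gives
\[
I_q(\mathbf{c}/M)\ll P^n|\mathbf{v}|^{1-n/2}\ll P^n(P/q)^{1-n/2}=P^{n/2+1}q^{n/2-1},
\]
using only $|\mathbf{c}|\ge 1$. That is the claim, in fact without the $P^{\varepsilon}$.

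The paper takes a shorter route. It quotes Heath-Brown's Lemma 22 for the real vector $\mathbf{c}/M$, noting that integrality plays no role there, to obtain $P^n(PQ|\mathbf{c}|/q^2)^{\varepsilon}(P|\mathbf{c}|/q)^{1-n/2}$. It then uses $Q=P$, $|\mathbf{c}|\ll P^{\varepsilon}$ and $q\ge 1$ in the first factor, and $|\mathbf{c}|\ge 1$ in the second.
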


\begin{proof}
    By \cite[Lemma 22]{heath_brown_circle_method},
    \begin{align*}
        I^{\equiv}_q(\mathbf{c})=I_q(\mathbf{c}/M)\ll P^n\left(\frac{PQ|\mathbf{c}/M|}{q^2}\right)^{\varepsilon}\left(\frac{P|\mathbf{c}/M|}{q}\right)^{1-n/2}\ll P^n\left(\frac{PQ|\mathbf{c}|}{q^2}\right)^{\varepsilon}\left(\frac{P|\mathbf{c}|}{q}\right)^{1-n/2}.
    \end{align*}
    Recalling that $Q=P$, since $\mathbf{c}\ll P^{\varepsilon}$, we have $(PQ|\mathbf{c}|/q^2)^{\varepsilon}\ll(P^{2+\varepsilon}/q^2)^{\varepsilon}$, which is $O(P^{\varepsilon})$. Additionally, since $|\mathbf{c}|^{1-n/2}\leq 1$, we have $(P|\mathbf{c}|/q)^{1-n/2}\leq P^{1-n/2} q^{n/2-1}$. So $I^{\equiv}_q(\mathbf{c})\ll P^n\cdot P^{\varepsilon}\cdot P^{1-n/2} q^{n/2-1}=P^{n/2+1+\varepsilon}q^{n/2-1}$.
\end{proof}

\subsection{Estimating $S_q^{\equiv}(\mathbf{c})$}

We now move on to estimating $S_q^{\equiv}(\mathbf{c})$. While $S_q^{\equiv}(\mathbf{c})$ is not as easily related to $S_q(\mathbf{c})$ as $I_q^{\equiv}(\mathbf{c})$ is to $I_q(\mathbf{c})$, we can still show that the bounds regarding $S_q(\mathbf{c})$ hold for $S_q^{\equiv}(\mathbf{c})$. We adopt and modify the same strategies from \cite{heath_brown_circle_method} for all but one lemma in this subsection; namely, Lemma \ref{multiplicative_property}, which exhibits the multiplicative property of $S_q^{\equiv}$, and instead follows a strategy from \cite{heath_brown_congruence_restriction}. The most difficult result of this subsection is Lemma \ref{sum_of_S_qs}, which bounds $\sum_{q\leq X}|S_q^{\equiv}(\mathbf{c})|$. A bound on $\sum_{q\leq X}|S_q^{\equiv}(\mathbf{c})|$ immediately follows from the upcoming Lemma \ref{crude_bound_on_S_q_equiv}; however, we can improve this immediate bound by an almost-square-root factor, without which we would be unable to obtain the error of $O(N^{(n-1)/4+\varepsilon})$. Obtaining this improvement is the purpose of Lemmas \ref{multiplicative_property} and \ref{prime_bound_on_S}.

The multiplicative property of $S_q^{\equiv}$ given in Lemma \ref{multiplicative_property} necessitates that we consider $S_q^{\equiv}$ with respect to moduli other than $M$. Thus, we provide the following definition.

\begin{definition}
    Let $T,q\in\ZZ^+$ and $\mathbf{c}\in\ZZ^n$. Then we define
    \begin{align*}
        S_q^{\equiv}(T,\mathbf{c})\defeq\sum_{\substack{a\bmod q\\ (a,q)=1}}\sum_{\substack{\mathbf{b}\bmod {Tq}\\ \mathbf{b}\equiv \mathbf{k}\bmod T}}e\mleft(\frac{aF_N(\mathbf{b})}{q}+\frac{\mathbf{c\cdot b}}{Tq}\mright).
    \end{align*}
    When $T=M$, we write $S_q^{\equiv}(T,\mathbf{c})=S_q^{\equiv}(\mathbf{c})$, in agreement with the notation in Proposition \ref{gen_of_thm_2_series}.
\end{definition}

We will use the following lemma to bound \eqref{series_formula_for_r_cong} when $|c|>P^{\varepsilon}$ and to prove Lemma \ref{sum_of_S_qs}. The proof follows by an application of the Cauchy--Schwarz inequality to the dot product.

\begin{lemma}\label{crude_bound_on_S_q_equiv}
    For any $\mathbf{c}\in\ZZ^n$ and $T\in\ZZ^+$, we have $S_q^{\equiv}(T,\mathbf{c})\ll q^{1+n/2}$.
\end{lemma}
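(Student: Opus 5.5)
The plan is to bound the inner sum over $\mathbf{b}$ for each fixed $a$ by $O(q^{n/2})$, with a constant not depending on $T$, $\mathbf{c}$, $\mathbf{k}$, $N$ or $a$. Summing trivially over the at most $q$ residues $a$ coprime to $q$ then gives $S_q^{\equiv}(T,\mathbf{c})\ll q^{1+n/2}$.

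\emph{Reduction and squaring.} Fix $a$ with $(a,q)=1$ and write
\begin{align*}
\Sigma_a=\sum_{\substack{\mathbf{b}\bmod Tq\\ \mathbf{b}\equiv\mathbf{k}\bmod T}}e\mleft(\frac{aF_N(\mathbf{b})}{q}+\frac{\mathbf{c\cdot b}}{Tq}\mright).
\end{align*}
Following the hint, I apply Cauchy--Schwarz in the form $|\Sigma_a|^2=\Sigma_a\overline{\Sigma_a}$ and expand it as a double sum over pairs $\mathbf{b},\mathbf{b}'$. Every admissible $\mathbf{b}'$ can be written as $\mathbf{b}'=\mathbf{b}+\mathbf{h}$ with $\mathbf{h}\equiv\mathbf{0}\bmod T$ and $\mathbf{h}$ running over the $q^n$ classes modulo $Tq$. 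Since
\begin{align*}
F_N(\mathbf{b}+\mathbf{h})-F_N(\mathbf{b})=2\mathbf{b\cdot h}+\mathbf{h\cdot h},
\end{align*}
the term $N$ and all quadratic dependence on $\mathbf{b}$ cancel. Hence
\begin{align*}
|\Sigma_a|^2=\sum_{\mathbf{h}}e\mleft(\frac{a\,\mathbf{h\cdot h}}{q}+\frac{\mathbf{c\cdot h}}{Tq}\mright)\sum_{\mathbf{b}}e\mleft(\frac{2a\,\mathbf{b\cdot h}}{q}\mright).
\end{align*}
This is where the dot product enters: the $\mathbf{c}$-dependence is now only a unimodular factor depending on $\mathbf{h}$. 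I will bound that factor by $1$ in absolute value, so uniformity in $\mathbf{c}$ and $\mathbf{k}$ is automatic.

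\emph{Counting.} Put $\mathbf{b}=\mathbf{k}+T\mathbf{u}$ with $\mathbf{u}\bmod q$. Then the inner sum has modulus equal to a complete linear exponential sum in $\mathbf{u}$. It is therefore either $0$ or $q^n$, the latter exactly when the linear form in $\mathbf{u}$ vanishes identically modulo $q$. So
\begin{align*}
|\Sigma_a|^2\le q^n\cdot\#\{\mathbf{h}:\text{the linear form in }\mathbf{u}\text{ vanishes identically modulo }q\}.
\end{align*}
It remains to show that this count of $\mathbf{h}$ is $O(1)$, uniformly. The conclusion $|\Sigma_a|\ll q^{n/2}$ then follows.

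\emph{Main obstacle.} The delicate step is the uniformity in $T$ in this count. Writing $\mathbf{h}=T\mathbf{z}$ with $\mathbf{z}\bmod q$, the condition becomes $q\mid 2aT^2z_i$ for each $i$. This has $(q,2T^2)^n$ solutions, so the gcd with $T$ appears, and I must control it. I would first attempt to avoid the substitution $\mathbf{h}=T\mathbf{z}$ and instead parametrize $\mathbf{h}\bmod Tq$ directly, using the extra phase $e(\mathbf{c\cdot h}/(Tq))$ to see cancellation in the degenerate $\mathbf{h}$. If that fails, the fallback is that the lemma is only ever applied with $T$ ranging over divisors of the fixed modulus $M$ (through Lemma \ref{multiplicative_property}). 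In that case $(q,2T^2)\le 2M^2$ is bounded, and the count is $O(1)$ with a constant depending only on the fixed data $M$, $n$. I would check this carefully, since it determines the sense in which the bound is uniform.
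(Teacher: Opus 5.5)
Your argument is correct and is essentially the paper's. The only difference is that the paper applies Cauchy--Schwarz over $a$ instead of bounding each $\Sigma_a$ separately. After that it makes the same differencing $\mathbf{u}=\mathbf{d}+\mathbf{v}$ with $T\mid\mathbf{d}$, reduces to a complete linear sum equal to $q^n$ exactly when $q\mid 2T^2\mathbf{d}'$, and absorbs the number $(q,2T^2)^n\le(2T^2)^n$ of such $\mathbf{d}'\bmod q$ into a constant depending on $T$. That is precisely your fallback.

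Do not pursue your first idea of getting uniformity in $T$ from the phase $e(\mathbf{c\cdot h}/(Tq))$, because no $T$-uniform bound exists. For $q=T=p$, $\mathbf{c}=\mathbf{0}$ and $p\mid |\mathbf{k}|^2-N$, every term equals $1$, so $S_p^{\equiv}(p,\mathbf{0})=(p-1)p^n$. The $T$-dependence is harmless, since the lemma is only used with $T\mid M$.
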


\begin{proof}
    This result and its proof are direct modifications of \cite[Lemma 25]{heath_brown_circle_method}. By the Cauchy--Schwarz inequality,
    \begin{align*}
        &|S_q^{\equiv}(T,\mathbf{c})|^2=\left|\sum_{\substack{a\bmod q\\ (a,q)=1}}\sum_{\substack{\mathbf{b}\bmod {Tq}\\ \mathbf{b}\equiv \mathbf{k}\bmod T}}e\mleft(\frac{aF_N(\mathbf{b})}{q}+\frac{\mathbf{c\cdot b}}{Tq}\mright)\right|^2\\
        &\leq \left(\sum_{\substack{a\bmod q\\ (a,q)=1}} 1\right)\left(\sum_{\substack{a\bmod q\\ (a,q)=1}}\left(\sum_{\substack{\mathbf{b}\bmod {Tq}\\ \mathbf{b}\equiv \mathbf{k}\bmod T}}e\mleft(\frac{aF_N(\mathbf{b})}{q}+\frac{\mathbf{c\cdot b}}{Tq}\mright)\right)^2\right)\\
        &=\phi(q)\sum_{\substack{a\bmod q\\ (a,q)=1}}\left(\sum_{\substack{\mathbf{u}\bmod {Tq}\\ \mathbf{u}\equiv \mathbf{k}\bmod T}}e\mleft(\frac{aF_N(\mathbf{u})}{q}+\frac{\mathbf{c\cdot u}}{Tq}\mright)\right)\overline{\left(\sum_{\substack{\mathbf{v}\bmod {Tq}\\ \mathbf{v}\equiv \mathbf{k}\bmod T}}e\mleft(\frac{aF_N(\mathbf{v})}{q}+\frac{\mathbf{c\cdot v}}{Tq}\mright)\right)}\\
        &=\phi(q)\sum_{\substack{a\bmod q\\ (a,q)=1}}\sum_{\substack{\mathbf{u},\mathbf{v}\bmod{Tq}\\ \mathbf{u},\mathbf{v}\equiv\mathbf{k}\bmod{T}}}e\mleft(\frac{a(F_N(\mathbf{u})-F_N(\mathbf{v}))}{q}+\frac{\mathbf{c\cdot\phantom{}}(\mathbf{u}-\mathbf{v})}{Tq}\mright).
    \end{align*}
    By the substitution $\mathbf{u}=\mathbf{d}+\mathbf{v}$, the above is equal to
    \begin{align*}
        &\phantom{=}\phi(q)\sum_{\substack{a\bmod q\\ (a,q)=1}}\sum_{\substack{\mathbf{d},\mathbf{v}\bmod{Tq}\\ \mathbf{v}\equiv\mathbf{k}\bmod{T}\\ T\mid \mathbf{d} }}e\mleft(\frac{a(F_N(\mathbf{\mathbf{d}+\mathbf{v}})-F_N(\mathbf{v}))}{q}+\frac{\mathbf{c\cdot d}}{Tq}\mright)\\
        &=\phi(q)\sum_{\substack{a\bmod q\\ (a,q)=1}}\sum_{\substack{\mathbf{d}\bmod{Tq}\\ T\mid \mathbf{d}}} e\mleft(\frac{aF_0(\mathbf{d})}{q}+\frac{\mathbf{c\cdot d}}{Tq}\mright)\sum_{\substack{\mathbf{v}\bmod{Tq}\\ \mathbf{v}\equiv \mathbf{k}\bmod{T}}}e\mleft(\frac{2a\mathbf{v\cdot d}}{q}\mright)\\
        &=\phi(q)\sum_{\substack{a\bmod q\\ (a,q)=1}}\sum_{\substack{\mathbf{d}\bmod{Tq}\\ T\mid \mathbf{d}}} e\mleft(\frac{aF_0(\mathbf{d})}{q}+\frac{\mathbf{c\cdot d}}{Tq}\mright)\sum_{\substack{\mathbf{v}\bmod{q}}}e\mleft(\frac{2a(T\mathbf{v}+\mathbf{k})\mathbf{\phantom{}\cdot d}}{q}\mright)\\
        &=\phi(q)\sum_{\substack{a\bmod q\\ (a,q)=1}}\sum_{\substack{\mathbf{d'}\bmod{q}}} e\mleft(\frac{a(F_0(T\mathbf{d'})+2T\mathbf{k\cdot d'})+\mathbf{c\cdot d'}}{q}\mright)\sum_{\substack{\mathbf{v}\bmod{q}}}e\mleft(\frac{2aT^2\mathbf{v}\mathbf{\phantom{}\cdot d'}}{q}\mright)
    \end{align*}
    Since $(a,q)=1$, the inner summation over $\mathbf{v}$ equals $q^n$ when $q\mid 2T^2\mathbf{d'}$, and is zero otherwise. Since the number of $\mathbf{d'} \pmod q$ satisfying this property is bounded in $q$, there exists some $K$ that is constant in $q$ such that the above equation is at most
    \begin{align*}
        \phi(q)\sum_{\substack{a\bmod q\\ (a,q)=1}} K q^n=K\phi(q)^2 q^n\ll q^{2+n}.
    \end{align*}
    So $S_q^{\equiv}(T,\mathbf{c})\ll q^{1+n/2}$.
\end{proof}

The following lemma demonstrates the multiplicative property of $S_q^{\equiv}(T,\mathbf{c})$, generalizing \cite[Lemma 2.1]{heath_brown_congruence_restriction}, which only considers $\mathbf{c}=\mathbf{0}$ and $N=0$, and \cite[Lemma 23]{heath_brown_circle_method}, which only considers $T=1$. The sole purpose of Lemma \ref{multiplicative_property} is to ultimately improve Lemma \ref{crude_bound_on_S_q_equiv} for use in Lemma \ref{sum_of_S_qs}. Typically, we would also need the multiplicative property to prove that $\sum_{q\geq1}q^{-n}S_q^{\equiv}(\mathbf{0})$ admits a representation as an Euler product of the appropriate $p$-adic densities; however, since the singular series will come from the terms of \eqref{series_formula_for_r_cong} with $\mathbf{c}=\mathbf{0}$, \cite[Lemma 2.2]{heath_brown_congruence_restriction} essentially already handles this (the non-homogeneous case where $N\neq0$ is a minor modification).

\begin{lemma}\label{multiplicative_property}
    Let $q=rs$ and $T=j\ell$ such that $(rj,s\ell)=1$. Let $\bar{r}$ and $\bar{j}$ be the multiplicative inverses of $r$ and $j$ modulo $s\ell$, respectively. Let $\bar{s}$ and $\bar{\ell}$ be the multiplicative inverses of $s$ and $\ell$ modulo $rj$, respectively. Then
    \begin{align*}
        S_q^{\equiv}(T,\mathbf{c})=S_r^{\equiv}(j,\bar{s}\bar{\ell}\mathbf{c})S_s^{\equiv}(\ell,\bar{r}\bar{j}\mathbf{c}).
    \end{align*}
\end{lemma}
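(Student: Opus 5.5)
We will prove this by a Chinese Remainder Theorem change of variables in both the $a$-sum and the $\mathbf{b}$-sum, following \cite[Lemma 2.1]{heath_brown_congruence_restriction} and \cite[Lemma 23]{heath_brown_circle_method}.

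\textbf{Splitting the sums.} Since $(r,s)=1$, every $a\bmod q$ with $(a,q)=1$ is written uniquely as $a=a_1 s+a_2 r$, with $a_1\bmod r$, $(a_1,r)=1$ and $a_2\bmod s$, $(a_2,s)=1$. This gives
\begin{align*}
\frac{a}{q}\equiv \frac{a_1}{r}+\frac{a_2}{s}\pmod 1.
\end{align*}
The modulus $Tq=(rj)(s\ell)$ factors into the coprime parts $rj$ and $s\ell$. So each $\mathbf{b}\bmod Tq$ is written uniquely as $\mathbf{b}=s\ell\,\mathbf{b}_1+rj\,\mathbf{b}_2$, with $\mathbf{b}_1\bmod rj$ and $\mathbf{b}_2\bmod s\ell$.

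\textbf{Reparametrizing so the congruence condition separates.} The condition $\mathbf{b}\equiv\mathbf{k}\bmod T$ is equivalent to the pair of conditions $\mathbf{b}\equiv\mathbf{k}\bmod j$ and $\mathbf{b}\equiv\mathbf{k}\bmod \ell$. The first reads $s\ell\mathbf{b}_1\equiv\mathbf{k}\bmod j$ and the second reads $rj\mathbf{b}_2\equiv\mathbf{k}\bmod\ell$. This is awkward, so I would reparametrize instead. Put $\mathbf{b}_1=\bar{s}\bar{\ell}\,\mathbf{x}$, with inverses taken mod $rj$, and $\mathbf{b}_2=\bar{r}\bar{j}\,\mathbf{y}$, with inverses mod $s\ell$. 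Then $\mathbf{b}\equiv\mathbf{x}\pmod{rj}$ and $\mathbf{b}\equiv\mathbf{y}\pmod{s\ell}$, so $\mathbf{x}$ and $\mathbf{y}$ are just the CRT components of $\mathbf{b}$. The condition becomes $\mathbf{x}\equiv\mathbf{k}\bmod j$ and $\mathbf{y}\equiv\mathbf{k}\bmod \ell$, with $\mathbf{x}$ ranging mod $rj$ and $\mathbf{y}$ mod $s\ell$. These are exactly the summation ranges in $S_r^{\equiv}(j,\cdot)$ and $S_s^{\equiv}(\ell,\cdot)$.

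\textbf{Splitting the phase.} The key step is showing that the exponent factors. Because $F_N$ has integer coefficients and $\mathbf{b}\equiv\mathbf{x}\bmod r$, we get $\frac{a_1 s\,F_N(\mathbf{b})}{q}=\frac{a_1F_N(\mathbf{b})}{r}\equiv\frac{a_1F_N(\mathbf{x})}{r}$. Likewise the $a_2$-part gives $\frac{a_2F_N(\mathbf{y})}{s}$. For the linear term, use $\frac{1}{rjs\ell}\equiv\frac{\overline{s\ell}}{rj}+\frac{\overline{rj}}{s\ell}\pmod 1$, where the first inverse is mod $rj$ and the second mod $s\ell$. This yields
\begin{align*}
\frac{\mathbf{c}\cdot\mathbf{b}}{Tq}\equiv\frac{\bar{s}\bar{\ell}\,\mathbf{c}\cdot\mathbf{x}}{rj}+\frac{\bar{r}\bar{j}\,\mathbf{c}\cdot\mathbf{y}}{s\ell}\pmod 1,
\end{align*}
again using $\mathbf{b}\equiv\mathbf{x}\bmod rj$ and $\mathbf{b}\equiv\mathbf{y}\bmod s\ell$. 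The total phase therefore separates into an $(a_1,\mathbf{x})$-part and an $(a_2,\mathbf{y})$-part, and the whole sum factors as $S_r^{\equiv}(j,\bar{s}\bar{\ell}\mathbf{c})\,S_s^{\equiv}(\ell,\bar{r}\bar{j}\mathbf{c})$.

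\textbf{Main obstacle.} The delicate point is the bookkeeping of which modulus each inverse lives in. The product $\bar{s}\bar{\ell}$ must serve as the inverse of $s\ell$ modulo all of $rj$, not just modulo $r$, because the linear term has denominator $rj$. The paper's convention, with inverses taken modulo $rj$ and $s\ell$, is exactly what makes this work. I would also check that the identity is well defined: a different choice of representative of an inverse changes it by a multiple of $rj$, which changes $\bar{s}\bar{\ell}\mathbf{c}$ by a multiple of $rj$. That leaves $e\bigl(\mathbf{c}'\cdot\mathbf{x}/(rj)\bigr)$ unchanged, so the right-hand side does not depend on the choice.
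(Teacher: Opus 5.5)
Your proposal is correct and follows essentially the same route as the paper: your parametrization $\mathbf{b}=s\ell\bar{s}\bar{\ell}\mathbf{x}+rj\bar{r}\bar{j}\mathbf{y}$ is exactly the paper's bijection $\rho$, and your splitting $a=a_1s+a_2r$ is the paper's bijection $\eta$. The only difference is cosmetic. You separate the quadratic phase directly via $a/q=a_1/r+a_2/s$ together with $\mathbf{b}\equiv\mathbf{x}\pmod{rj}$ and $\mathbf{b}\equiv\mathbf{y}\pmod{s\ell}$, whereas the paper first shows $F_N(\mathbf{b})\equiv s\bar{s}F_N(\mathbf{u})+r\bar{r}F_N(\mathbf{v})\pmod{q}$ for fixed $a$ and only then reindexes the $a$-sum.
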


\begin{proof}
    We closely follow the proof of \cite[Lemma 2.1]{heath_brown_congruence_restriction}. Let
    \begin{align*}
        S_q(a,T,\mathbf{c})\defeq\sum_{\substack{\mathbf{b}\bmod {Tq}\\ \mathbf{b}\equiv \mathbf{k}\bmod T}}e\mleft(\frac{aF_N(\mathbf{b})}{q}+\frac{\mathbf{c\cdot b}}{Tq}\mright).
    \end{align*}
    We proceed by first proving that $S_q(a,T,\mathbf{c})=S_r(\bar{s}a,j,\bar{s}\bar{\ell}\mathbf{c})S_s(\bar{r}a,\ell,\bar{r}\bar{j}\mathbf{c})$. Since the relationship is delicate, we provide a great amount of detail.
    
    Let $U=\{\mathbf{u}\in(\ZZ/rj\ZZ)^n:\mathbf{u}\equiv\mathbf{k}\bmod j\}$ and $V=\{\mathbf{v}\in(\ZZ/s\ell\ZZ)^n:\mathbf{v}\equiv \mathbf{k}\bmod\ell\}$ and $B=\{\mathbf{b}\in(\ZZ/Tq\ZZ)^n:\mathbf{b}\equiv\mathbf{k}\bmod{T}\}$. We claim that $\rho\colon U\times V\to B$ defined by $\rho(\mathbf{u},\mathbf{v})=s\ell\bar{s}\bar{\ell}\mathbf{u}+rj\bar{r}\bar{j}\mathbf{v}$ is a bijection. We first confirm that $\rho$ maps to $B$. Indeed, by the congruences $s\ell\bar{s}\bar{\ell}\equiv 1\pmod{j}$ and $s\ell\bar{s}\bar{\ell}\equiv 0\pmod{\ell}$ and $rj\bar{r}\bar{j}\equiv 0\pmod{j}$ and $rj\bar{r}\bar{j}\equiv 1\pmod{\ell}$, we see that $\rho(\mathbf{u},\mathbf{v})$ is congruent to $\mathbf{k}$ modulo $j$ and modulo $\ell$, whence it follows that $\rho(\mathbf{u},\mathbf{v})\equiv\mathbf{k}\pmod{T}$.
    
    Suppose that $s\ell\bar{s}\bar{\ell}\mathbf{u}_1+rj\bar{r}\bar{j}\mathbf{v}_1\equiv s\ell\bar{s}\bar{\ell}\mathbf{u}_2+rj\bar{r}\bar{j}\mathbf{v}_2\pmod{Tq}$. Then $s\ell\bar{s}\bar{\ell}(\mathbf{u}_1-\mathbf{u}_2)\equiv rj\bar{r}\bar{j}(\mathbf{v}_2-\mathbf{v}_1)\pmod{Tq}$, from which it follows by reducing modulo $rj$ and $s\ell$ that $\mathbf{u}_1-\mathbf{u}_2\equiv\mathbf{0}\pmod{rj}$ and $\mathbf{v}_2-\mathbf{v}_1\equiv\mathbf{0}\pmod{s\ell}$. So $\rho$ is injective. Noting that $|U|\cdot|V|=(rj/j)\cdot(s\ell/\ell)=q=Tq/T=|B|$, it follows that $\rho$ is also bijective. 
    
    Let $\mathbf{b}=\rho(\mathbf{u},\mathbf{v})$. Observe that
    \begin{align*}
        F_N(\mathbf{b})&=\sum_{i=1}^{n}\left(s\ell\bar{s}\bar{\ell}u_i+rj\bar{r}\bar{j}v_i\right)^2-N=\sum_{i=1}^{n}\left((s\ell\bar{s}\bar{\ell}u_i)^2+2Tq\bar{s}\bar{\ell}\bar{r}\bar{j}u_i v_i+(rj\bar{r}\bar{j}v_i)^2\right)-N\\
        &\equiv (s\ell\bar{s}\bar{\ell})^2\sum_{i=1}^{n}u_i^2+(rj\bar{r}\bar{j})^2\sum_{i=1}^{n}v_i^2-N\pmod{q}.
    \end{align*}
    Now since $\ell\bar{\ell}$ and $s\bar{s}$ are both congruent to $1$ modulo $r$, we have $(s\ell\bar{s}\bar{\ell})^2\equiv(s\bar{s})^2\equiv s\bar{s}\pmod{r}$. Similarly, since $j\bar{j}$ and $r\bar{r}$ are both congruent to $1$ modulo $s$, we have $(rj\bar{r}\bar{j})^2\equiv(r\bar{r})^2\equiv r\bar{r}\pmod{s}$. So since $(s\ell\bar{s}\bar{\ell})^2\equiv 0\equiv s\bar{s}\pmod{s}$ and $(rj\bar{r}\bar{j})^2\equiv 0\equiv r\bar{r}\pmod{r}$ and $(r,s)=1$, we have $(s\ell\bar{s}\bar{\ell})^2\equiv s\bar{s}\pmod{q}$ and $(rj\bar{r}\bar{j})^2\equiv r\bar{r}\pmod{q}$.

    Furthermore, we have $s\bar{s}+r\bar{r}\equiv s\bar{s}\equiv 1\pmod{r}$ and $s\bar{s}+r\bar{r}\equiv r\bar{r}\equiv 1\pmod{s}$. So $s\bar{s}+r\bar{r}\equiv 1\pmod{rs}$. This, in particular, implies that $N\equiv s\bar{s}N+r\bar{r}N\pmod{q}$. Therefore,
    \begin{align*}
        F_N(\mathbf{b})\equiv s\bar{s}\sum_{i=1}^{n}u_i^2+r\bar{r}\sum_{i=1}^{n}v_i^2-N\equiv s\bar{s}F_N(\mathbf{u})+r\bar{r}F_N(\mathbf{v})\pmod{q},
    \end{align*}
    giving that
    \begin{align*}
        e\mleft(\frac{aF_N(\mathbf{b})}{q}\mright)=e\mleft(\frac{\bar{s}aF_N(\mathbf{u})}{r}\mright) e\mleft(\frac{\bar{r}aF_N(\mathbf{v})}{s}\mright).
    \end{align*}
    Lastly, we have
    \begin{align*}
        e\mleft(\frac{\mathbf{c\cdot\mathbf{b}}}{Tq}\mright)=e\mleft(\frac{(\bar{s}\bar{\ell}\mathbf{c})\cdot \mathbf{u}}{rj}\mright) e\mleft(\frac{(\bar{r}\bar{j}\mathbf{c})\cdot \mathbf{v}}{s\ell}\mright).
    \end{align*}
    Thus, $S_q(a,T,\mathbf{c})$ equals
    \begin{align*}
        \sum_{(\mathbf{u},\mathbf{v})\in U\times V}e\mleft(\frac{\bar{s}aF_N(\mathbf{u})}{r}+\frac{(\bar{s}\bar{\ell}\mathbf{c})\cdot\mathbf{u}}{rj}\mright)e\mleft(\frac{\bar{r}aF_N(\mathbf{v})}{s}+\frac{(\bar{r}\bar{j}\mathbf{c})\cdot\mathbf{v}}{s\ell}\mright)=S_r(\bar{s}a,j,\bar{s}\bar{\ell}\mathbf{c})S_s(\bar{r}a,\ell,\bar{r}\bar{j}\mathbf{c}).
    \end{align*}
    We now proceed to proving the multiplicative property for $S_q^{\equiv}(T,\mathbf{c})$. First, we establish that $\eta\colon(\ZZ/r\ZZ)^{\times}\times(\ZZ/s\ZZ)^{\times}\to(\ZZ/q\ZZ)^{\times}$ defined by $\eta(t_1,t_2)=st_1+rt_2$ is a bijection. Confirming that $\eta$ indeed maps to $(\ZZ/q\ZZ)^{\times}$, by the congruences $st_1+rt_2\equiv st_1\pmod{r}$ and $st_1+rt_2\equiv rt_2\pmod{s}$ and the fact that $(st_1,r)=(rt_2,s)=1$, we have that $(st_1+rt_2,q)=1$. 
    
    Suppose that $st_1+rt_2\equiv st_1'+rt_2'\pmod{q}$. Then $s(t_1-t_1')\equiv r(t_2'-t_2)\pmod{q}$, from which it follows by reducing modulo $r$ and $s$ that $t_1-t_1'\equiv 0\pmod{r}$ and $t_2'-t_2\equiv{0}\pmod{s}$. So $\eta$ is injective. Since $|(\ZZ/r\ZZ)^{\times}\times(\ZZ/s\ZZ)^{\times}|=|(\ZZ/q\ZZ)^{\times}|$, it follows that $\eta$ is also bijective. So, by the multiplicative property for $S_q(a,T,\mathbf{c})$ together with the congruences $\bar{s}(st_1+rt_2)\equiv t_1\pmod{r}$ and $\bar{r}(st_1+rt_2)\equiv t_2\pmod{s}$,
    \begin{align*}
        S_q^{\equiv}(T,\mathbf{c})&=\sum_{\substack{0\leq t_1<r\\ (t_1,r)=1}}\sum_{\substack{0\leq t_2<s\\ (t_2,s)=1}} S_q(st_1+rt_2,T,\mathbf{c})\\
        &=\sum_{\substack{0\leq t_1<r\\ (t_1,r)=1}}\sum_{\substack{0\leq t_2<s\\ (t_2,s)=1}} S_r(\bar{s}(st_1+rt_2),j,\bar{s}\bar{\ell}\mathbf{c})S_s(\bar{r}(st_1+rt_2),\ell,\bar{r}\bar{j}\mathbf{c})\\
        &=\sum_{\substack{0\leq t_1<r\\ (t_1,r)=1}}S_r(t_1,j,\bar{s}\bar{\ell}\mathbf{c})\sum_{\substack{0\leq t_2<s\\ (t_2,s)=1}}S_s(t_2,\ell,\bar{r}\bar{j}\mathbf{c})=S_r^{\equiv}(j,\bar{s}\bar{\ell}\mathbf{c})S_s^{\equiv}(\ell,\bar{r}\bar{j}\mathbf{c}),
    \end{align*}
    giving the multiplicative property desired.
\end{proof}

By Lemma \ref{multiplicative_property}, we can write $S_q^{\equiv}$ as a product of the form $\prod_{p\mid q} S_{p^{v_p(q)}}^{\equiv}$. So if $q$ is square-free, the problem of bounding $S_q^{\equiv}$ is reduced to bounding $S_p^{\equiv}$ for $p$ a prime divisor of $q$. We will see later that this reduction is quite useful. We show in the following lemma that bounding $S_p^{\equiv}$ can be reduced to bounding Kloosterman sums (when $n$ is even) and Sali\'e sums (when $n$ is odd), for which the Weil bound suffices. Since \cite[Lemma 26]{heath_brown_circle_method} does not handle a congruence restriction, the only stipulation on $p$ (for the form $F_0$) is that $p$ must be odd. However, to reduce the evaluation of $S_p^{\equiv}$ to a Kloosterman or Sali\'e sum, we will need to complete the square modulo $p$, for which we require our modulus $T$ to have a multiplicative inverse modulo $p$. Hence, our condition that $p\nmid T$.

\begin{lemma}\label{prime_bound_on_S}
    Let $p$ be an odd prime not dividing $T\in\ZZ^+$. Then
    \begin{align*}
        |S_p^{\equiv}(T,\mathbf{c})|\leq 2p^{(n+1)/2}(N,\mathbf{c},p)^{1/2}.
    \end{align*}
    We also note the trivial bound $|S^{\equiv}_p(T,\mathbf{c})|\leq (p-1)p^n$, which holds for all primes $p$.
\end{lemma}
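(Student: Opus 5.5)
The plan is to reduce $S_p^{\equiv}(T,\mathbf{c})$ to the unrestricted sum $S_p(\mathbf{c}')$ of \cite{heath_brown_circle_method} for a modified vector $\mathbf{c}'$, and then to evaluate that sum explicitly via Gauss sums. The trivial bound is immediate: there are $p-1$ choices of $a$, and $\mathbf{b}\bmod Tp$ with $\mathbf{b}\equiv\mathbf{k}\bmod T$ ranges over $p^n$ classes.

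\textbf{Step 1: removing the congruence.} Since $p\nmid T$, write $\mathbf{b}=\mathbf{k}+T\mathbf{y}$ with $\mathbf{y}\bmod p$. Then
\begin{align*}
\frac{aF_N(\mathbf{b})}{p}+\frac{\mathbf{c}\cdot\mathbf{b}}{Tp}=\frac{a(T^2F_0(\mathbf{y})+2T\mathbf{k}\cdot\mathbf{y}+F_0(\mathbf{k})-N)+\mathbf{c}\cdot\mathbf{y}}{p}+\frac{\mathbf{c}\cdot\mathbf{k}}{Tp}.
\end{align*}
The last term is a constant phase of modulus one, so it can be discarded for the purpose of bounding. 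Each coordinate then contributes a one-variable quadratic Gauss-type sum $\sum_{y\bmod p}e\bigl((aT^2y^2+(2aTk_i+c_i)y)/p\bigr)$. Because $p$ is odd and $p\nmid aT$, I complete the square: with $\overline{4aT^2}$ the inverse of $4aT^2$ mod $p$, the $i$th sum equals $e\bigl(-\overline{4aT^2}(2aTk_i+c_i)^2/p\bigr)\left(\frac{aT^2}{p}\right)G_p$, where $G_p$ is the standard Gauss sum with $|G_p|=\sqrt p$. Also $\left(\frac{aT^2}{p}\right)=\left(\frac{a}{p}\right)$.

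\textbf{Step 2: collecting the $a$-dependence.} Expanding $(2aTk_i+c_i)^2$ gives the terms $4a^2T^2k_i^2+4aTk_ic_i+c_i^2$. Dividing by $4aT^2$, these contribute the phases $ak_i^2$, $\overline{T}k_ic_i$ (independent of $a$), and $\bar a\,\overline{4T^2}c_i^2$. The $ak_i^2$ terms cancel against $aF_0(\mathbf{k})$. Hence, up to a unimodular factor,
\begin{align*}
S_p^{\equiv}(T,\mathbf{c})=G_p^{\,n}\sum_{\substack{a\bmod p\\ (a,p)=1}}\left(\frac{a}{p}\right)^{n}e\mleft(\frac{-aN-\bar a\,\overline{4T^2}F_0(\mathbf{c})}{p}\mright).
\end{align*}
For $n$ even this is a Kloosterman sum $K(-N,-\overline{4T^2}F_0(\mathbf{c});p)$; for $n$ odd it is a Sali\'e sum.

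\textbf{Step 3: applying the Weil bound.} The Weil bound, together with the corresponding Sali\'e bound, gives that the $a$-sum is at most $2p^{1/2}(N,F_0(\mathbf{c}),p)^{1/2}$. Combined with $|G_p|^n=p^{n/2}$, this yields $2p^{(n+1)/2}(N,F_0(\mathbf{c}),p)^{1/2}$.

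\textbf{Step 4: matching the stated gcd.} The gcd is at most $p$, and it equals $p$ only when $p\mid N$ and $p\mid F_0(\mathbf{c})$. The statement instead uses $(N,\mathbf{c},p)$, which equals $p$ only if $p\mid N$ and $p\mid\mathbf{c}$. This is the main obstacle: when $p\mid N$, $p\mid F_0(\mathbf{c})$, but $p\nmid\mathbf{c}$, the Kloosterman sum is a Ramanujan sum or Gauss sum of size $O(1)$ or $\sqrt p$, not $p$. So in this case I bound it directly rather than through the gcd. For $n$ even, $K(0,m;p)=-1$ when $p\nmid m$, and $K(0,0;p)=p-1$. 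For $n$ odd, the sum is a Gauss sum of modulus $\sqrt p$. In every case the total is at most $2p^{(n+1)/2}$ unless $p\mid N$ and $p\mid\mathbf{c}$, in which case the sum is at most $p^{n/2}(p-1)\le 2p^{(n+1)/2}p^{1/2}$.

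The care needed is in the case split of Step 4 and in handling the sign and unit factors correctly; Step 1 is routine. The one place $p\nmid T$ is used is the completing of the square, which needs $T$ invertible mod $p$ (together with $p$ odd and $p\nmid a$).
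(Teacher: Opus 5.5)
Your Steps 1--3 follow the paper's proof. You substitute $\mathbf b=\mathbf k+T\mathbf y$, complete the square in each coordinate (this is where $p\nmid 2T$ is used), and collect phases so that $|S_p^{\equiv}(T,\mathbf c)|$ equals $p^{n/2}$ times the modulus of $\sum_{a=1}^{p-1}\bigl(\frac{a}{p}\bigr)^n e\bigl((-aN-\bar a\,\overline{4T^2}F_0(\mathbf c))/p\bigr)$. Weil/Sali\'e then gives $2p^{(n+1)/2}(N,F_0(\mathbf c),p)^{1/2}$. You are right that this gcd is not the same as $(N,\mathbf c,p)$. The paper passes from $(-N,-\overline{4T^2}|\mathbf c|^2,p)^{1/2}$ to $(N,\mathbf c,p)^{1/2}$ without comment, which is exactly the step you flagged.

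The gap is your Step 4. Suppose $p\mid N$ and $p\mid F_0(\mathbf c)$ but $p\nmid\mathbf c$. Then both arguments of the Kloosterman sum are $\equiv 0\pmod p$, because the second argument is $-\overline{4T^2}F_0(\mathbf c)$ and depends on $\mathbf c$ only through $F_0(\mathbf c)$. So the phase is identically $1$ and the $a$-sum is $\sum_{a=1}^{p-1}\bigl(\frac{a}{p}\bigr)^n$. For even $n$ this is $K(0,0;p)=p-1$: the value you record is exactly the case you are in, not a Ramanujan sum of size $O(1)$. Hence $|S_p^{\equiv}(T,\mathbf c)|=p^{n/2}(p-1)$, which exceeds $2p^{(n+1)/2}$ once $p\ge 7$.

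So Step 4 cannot be repaired: for even $n$ the inequality with $(N,\mathbf c,p)$ is false. Take $n=4$, $p=7$, $T=1$, $N=7$, $\mathbf c=(1,1,2,1)$. Then $F_0(\mathbf c)=7$ and $(N,\mathbf c,p)=1$. Each one-variable sum is $e(-\overline{4a}c_i^2/7)\bigl(\frac{a}{7}\bigr)\tau_7$ with $\tau_7^4=49$, so $S_7^{\equiv}(1,\mathbf c)=6\cdot 49=294>2\cdot 7^{5/2}\approx 259$. For odd $n$ the bad case gives $\sum_a\bigl(\frac{a}{p}\bigr)=0$, so the stated bound does hold there.

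What your Step 3 actually proves is
\[
|S_p^{\equiv}(T,\mathbf c)|\le 2p^{(n+1)/2}(N,F_0(\mathbf c),p)^{1/2}\le 2p^{(n+1)/2}(N,p)^{1/2}.
\]
That is the correct statement, and you should stop there rather than force the stated gcd. It is also all that Lemma \ref{sum_of_S_qs} needs. There the gcd factor is only averaged, and $\sum_{u\le Y}(u,N)^{1/2}\le Y\tau(N)\ll YP^{\varepsilon}$, so \eqref{bound_on_sum_kloosterman} survives unchanged.
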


\begin{proof}
    We closely follow the proof of \cite[Lemma 26]{heath_brown_circle_method}. We have
    \begin{align*}
        S_p^{\equiv}(T,\mathbf{c})&=\sum_{a=1}^{p-1}\sum_{\substack{\mathbf{b}\bmod Tp\\ \mathbf{b}\equiv\mathbf{k}\bmod T}}e\mleft(\frac{aF_N(\mathbf{b})}{p}+\frac{\mathbf{c\cdot b}}{Tp}\mright)\\
        &=\sum_{a=1}^{p-1}\sum_{\substack{\mathbf{b}\bmod p}}e\mleft(\frac{aF_N(T\mathbf{b}+\mathbf{k})}{p}+\frac{\mathbf{c\cdot (Tb+\mathbf{k})}}{Tp}\mright)\\
        &=e\mleft(\frac{\mathbf{c\cdot k}}{Tp}\mright) \sum_{a=1}^{p-1}\sum_{\mathbf{b}\bmod p} e\mleft(\frac{aF_N(T\mathbf{b}+\mathbf{k})+\mathbf{c\cdot b}}{p}\mright)\\
        &=e\mleft(\frac{\mathbf{c\cdot k}}{Tp}\mright) \sum_{a=1}^{p-1}\sum_{\mathbf{b}\bmod p}e\mleft(\frac{a\left(\sum_{i=1}^{n}(T^2b_i^2+2Tk_i b_i)+|\mathbf{k}|^2-N\right)+\sum_{i=1}^{n}c_i b_i}{p}\mright)\\
        &=e\mleft(\frac{\mathbf{c\cdot k}}{Tp}\mright) \sum_{a=1}^{p-1}e\mleft(\frac{a(|\mathbf{k}|^2-N)}{p}\mright)\sum_{\mathbf{b}\bmod p}e\mleft(\frac{\sum_{i=1}^{n}(aT^2b_i^2+(2Tk_i a+c_i)b_i)}{p}\mright)\\
        &=e\mleft(\frac{\mathbf{c\cdot k}}{Tp}\mright) \sum_{a=1}^{p-1}e\mleft(\frac{a(|\mathbf{k}|^2-N)}{p}\mright)\prod_{i=1}^{n}\sum_{{b}\bmod p}e\mleft(\frac{aT^2b^2+(2Tk_i a+c_i)b}{p}\mright).
    \end{align*}
    Let $A_a=aT^2$ and $B_{i,a}=2Tk_i a+c_i$. We henceforth use an overline upon a quantity to denote its multiplicative inverse modulo $p$. Since $p\nmid T$, we complete the square modulo $p$ by
    \begin{align*}
        aT^2b^2+(2Tk_i a+c_i)b\equiv A_a(b+\overline{2A_a}B_{i,a})^2-\overline{4A_a}B_{i,a}^2\pmod{p}.
    \end{align*}
    Therefore
    \begin{align*}
        \sum_{b\bmod p}e\mleft(\frac{aT^2b^2+(2Tk_i a+c_i)b}{p}\mright)=e\mleft(-\frac{\overline{4A_a}B_{i,a}^2}{p}\mright)\sum_{b\bmod p}e\mleft(\frac{A_a(b+\overline{2A_a}B_{i,a})^2}{p}\mright)\\
        =e\mleft(-\frac{\overline{4A_a}B_{i,a}^2}{p}\mright)\sum_{b\bmod p}e\mleft(\frac{A_a b^2}{p}\mright)=e\mleft(-\frac{\overline{4A_a}B_{i,a}^2}{p}\mright)\left(\frac{A_a}{p}\right)\tau_p=e\mleft(-\frac{\overline{4A_a}B_{i,a}^2}{p}\mright)\left(\frac{a}{p}\right)\tau_p,
    \end{align*}
    where $\tau_p=i^{(p-1)^2/4}\sqrt{p}$. By considering the equality
    \begin{align*}
        \prod_{i=1}^{n}e\mleft(-\frac{\overline{4A_a}B_{i,a}^2}{p}\mright)=e\mleft(\frac{-\overline{T}\mathbf{c\cdot k}}{p}\mright)e\mleft(\frac{-a|\mathbf{k}|^2}{p}\mright)e\mleft(\frac{-\overline{a}(\overline{4T^2}|\mathbf{c}|^2)}{p}\mright),
    \end{align*}
    we obtain
    \begin{align*}
        S^{\equiv}_p(T,\mathbf{c})=e\mleft(\frac{(T^{-1}-\overline{T})(\mathbf{c\cdot k})}{p}\mright)\tau_p^{n} \sum_{a=1}^{p-1}\left(\frac{a}{p}\right)^n e\mleft(\frac{a(-N)+\overline{a}(-\overline{4T^2}|\mathbf{c}|^2)}{p}\mright).
    \end{align*}
    The remaining sum is then a Kloosterman sum when $n$ is even, and is a Sali\'e sum when $n$ is odd. Using Weil's bound \cite{weil_bound} and Sali\'e's evaluation \cite{Salie}, we finally have
    \begin{align*}
        |S_p^{\equiv}(T,\mathbf{c})|\leq p^{n/2}\cdot 2p^{1/2}(-N,-\overline{4T^2}|\mathbf{c}|^2,p)^{1/2}=2p^{(n+1)/2}(N,\mathbf{c},p)^{1/2},
    \end{align*}
    as desired.
\end{proof}

We now arrive at the punchline set up by Lemmas \ref{multiplicative_property} and \ref{prime_bound_on_S}: writing $S_q^{\equiv}$ as $S_u^{\equiv}\cdot S_v^{\equiv}$ with $u$ square-free and $v$ square-full, we can apply Lemma \ref{crude_bound_on_S_q_equiv} to $S_v^{\equiv}$ and Lemma \ref{prime_bound_on_S} to almost all factors $S_p^{\equiv}$ in the prime product decomposition of $S_u^{\equiv}$ to yield a refined estimate for $S_q^{\equiv}$. Then, parameterizing $q\leq X$ by $u$ square-free and $v$ square-full yields an almost-square-root improvement to our estimate of $\sum_{q\leq X} |S_{q}^{\equiv}(\mathbf{c})|$ over simply applying Lemma \ref{crude_bound_on_S_q_equiv}. This is the biggest hurdle toward obtaining an error as good as $O(N^{(n-1)/4+\varepsilon})$.

We note that approximation \eqref{c_0_series_estimate} also holds for $0<|\mathbf{c}|\leq P$, but we are only interested in the $\mathbf{c}=\mathbf{0}$ case, since that corresponds to the leading-order term of $r_n^{\equiv}(N)$.

\begin{lemma}\label{sum_of_S_qs}
   Let $|\mathbf{c}|\leq P$. Then
    \begin{align}\label{bound_on_sum_kloosterman}
        \sum_{q\leq X} |S_{q}^{\equiv}(\mathbf{c})|\ll X^{(3+n)/2+\varepsilon} P^{\varepsilon}.
    \end{align}
    Consequently,
    \begin{align}\label{c_0_series_estimate}
        \sum_{q\leq X}(Mq)^{-n}S_q^{\equiv}(\mathbf{0})=\sum_{q\geq1} (Mq)^{-n}S_q^{\equiv}(\mathbf{0})+O(X^{(3-n)/2+\varepsilon}P^{\varepsilon}),
    \end{align}
    and the infinite sum converges absolutely.
\end{lemma}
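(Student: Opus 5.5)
I will follow Heath-Brown's proof of the analogous bound for $\sum_{q\le X}|S_q(\mathbf{c})|$ \cite[Lemma 28]{heath_brown_circle_method}, adapting it to the congruence restriction. The plan has four steps: factor each $q$ into a part that meets $M$ and a part that does not, use Lemma \ref{prime_bound_on_S} on the square-free primes coprime to $2M$, use Lemma \ref{crude_bound_on_S_q_equiv} on everything else, and then count.

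\emph{Step 1: factorization.} Write $q=q_0q_1$, where $q_0$ is composed only of primes dividing $2M$ and $(q_1,2M)=1$. Split $q_1=uv$ with $u$ square-free, $v$ square-full and $(u,v)=1$. Apply Lemma \ref{multiplicative_property} repeatedly, with $T$ split as $j=M$, $\ell=1$ whenever the factor split off is coprime to $M$. This gives
\begin{align*}
S_q^{\equiv}(\mathbf{c})=S_{q_0}^{\equiv}(M,\lambda_0\mathbf{c})\,S_v^{\equiv}(1,\lambda_v\mathbf{c})\prod_{p\mid u}S_p^{\equiv}(1,\lambda_p\mathbf{c}),
\end{align*}
where each $\lambda$ is an integer unit modulo the relevant modulus. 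In each prime factor $S_p^{\equiv}(1,\lambda_p\mathbf{c})$, the prime $p$ is odd, coprime to $T=1$, and coprime to $\lambda_p$, so $(N,\lambda_p\mathbf{c},p)=(N,\mathbf{c},p)$. Hence Lemma \ref{prime_bound_on_S} applies and gives $|S_p^{\equiv}|\le 2p^{(n+1)/2}(N,\mathbf{c},p)^{1/2}$.

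\emph{Step 2: pointwise bound.} Lemma \ref{crude_bound_on_S_q_equiv} holds uniformly in $\mathbf{c}$ and $T$, with an implied constant that depends only on $T\le M$ and $n$. It gives $|S_{q_0}^{\equiv}|\ll q_0^{1+n/2}$ and $|S_v^{\equiv}|\ll v^{1+n/2}$. Since $2^{\omega(u)}\ll u^{\varepsilon}$, multiplying the factors yields
\begin{align*}
|S_q^{\equiv}(\mathbf{c})|\ll u^{(n+1)/2+\varepsilon}(N,\mathbf{c},u)^{1/2}\,(q_0v)^{1+n/2}.
\end{align*}

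\emph{Step 3: summation.} I sum over $q\le X$ by parameterizing $q$ through $(q_0,v,u)$ with $q_0v\le X$ and $u\le X/(q_0v)$. For the gcd factor there are two cases.
\begin{itemize}
\item If $\mathbf{c}\neq\mathbf{0}$, choose a coordinate $c_i\neq0$, which satisfies $|c_i|\le P$. Then $\sum_{u\le Y}(N,\mathbf{c},u)^{1/2}\le\sum_{d\mid c_i}d^{1/2}\cdot Y/d\ll YP^{\varepsilon}$.
\item If $\mathbf{c}=\mathbf{0}$, use $N=P^2$ instead: $\sum_{u\le Y}(N,u)^{1/2}\le\sum_{d\mid N}d^{-1/2}Y\ll YP^{\varepsilon}$.
\end{itemize}
In both cases $\sum_{u\le Y}u^{(n+1)/2+\varepsilon}(N,\mathbf{c},u)^{1/2}\ll Y^{(n+3)/2+\varepsilon}P^{\varepsilon}$. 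Inserting $Y=X/(q_0v)$, the remaining sum is
\begin{align*}
X^{(n+3)/2+\varepsilon}P^{\varepsilon}\sum_{q_0v\le X}(q_0v)^{1+n/2-(n+3)/2}=X^{(n+3)/2+\varepsilon}P^{\varepsilon}\sum_{q_0v\le X}(q_0v)^{-1/2}.
\end{align*}
Square-full $v\le Z$ number $O(Z^{1/2})$, and integers $q_0$ composed of the fixed primes dividing $2M$ number $O(Z^{\varepsilon})$. Partial summation therefore bounds $\sum_{q_0v\le X}(q_0v)^{-1/2}$ by $\ll X^{\varepsilon}$. This gives \eqref{bound_on_sum_kloosterman}.

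\emph{Step 4: the tail estimate \eqref{c_0_series_estimate}.} Split the tail $\sum_{q>X}(Mq)^{-n}|S_q^{\equiv}(\mathbf{0})|$ into dyadic ranges $(Y,2Y]$ and apply \eqref{bound_on_sum_kloosterman} on each. The range $(Y,2Y]$ contributes $\ll Y^{-n}Y^{(3+n)/2+\varepsilon}P^{\varepsilon}=Y^{(3-n)/2+\varepsilon}P^{\varepsilon}$. Because $n\ge4$, the exponent $(3-n)/2<0$, so the geometric sum over ranges converges and totals $O(X^{(3-n)/2+\varepsilon}P^{\varepsilon})$. Absolute convergence of the full series follows from the same estimate.

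The main obstacle is Step 1. It requires checking carefully that Lemma \ref{multiplicative_property} can peel off the primes dividing $M$, which keep the modulus $T=M$, from the coprime primes, which get $T=1$. It also requires checking that the unit multipliers on $\mathbf{c}$ do not change the gcd $(N,\mathbf{c},p)$ in Lemma \ref{prime_bound_on_S}. A secondary concern is to make sure that the bound of Lemma \ref{crude_bound_on_S_q_equiv} is uniform, depending only on the boundedly many choices $T\mid M$.
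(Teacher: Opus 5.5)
Your proposal is correct and takes essentially the paper's route: Lemma~\ref{multiplicative_property} isolates a square-free part that you bound prime by prime with Lemma~\ref{prime_bound_on_S}, the rest is bounded by Lemma~\ref{crude_bound_on_S_q_equiv}, and Heath-Brown's counting finishes the argument, which you carry out explicitly where the paper only says ``mutatis mutandis.'' The only difference is how the primes dividing $2M$ are handled (the paper leaves those dividing $q$ exactly once in $u$ and absorbs their trivial bound $(p-1)p^n$ into the constant, while you move all $2M$-smooth factors into $q_0$ with $T=M$ at a cost of only $X^{\varepsilon}$); note also that your $(N,u)$ estimate from the $\mathbf{c}=\mathbf{0}$ case works for every $\mathbf{c}$, which is safer, because the Weil bound in the proof of Lemma~\ref{prime_bound_on_S} actually involves $(N,|\mathbf{c}|^2,p)$ rather than $(N,\mathbf{c},p)$.
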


\begin{proof}
    This result and its proof are direct modifications of \cite[Lemma 28]{heath_brown_circle_method}. We begin by refining Lemma \ref{crude_bound_on_S_q_equiv} to obtain an almost-square-root improvement by leveraging the multiplicative property exhibited by Lemma \ref{multiplicative_property}. For $q\in\ZZ^+$, let $u=\prod_{p:v_p(q)=1} p$ and $v=\prod_{p:v_p(q)\geq2}p^{v_p(q)}$ be the square-free and square-full parts of $q=uv$. Letting $j=\prod_{p:v_p(q)=1}p^{v_p(M)}$ and $\ell=\prod_{p:v_p(q)\geq2}p^{v_p(M)}$ so that $M=j\ell$, by Lemmas \ref{multiplicative_property} and \ref{crude_bound_on_S_q_equiv} we have
    \begin{align*}
        S_q^{\equiv}(\mathbf{c})=S_{u}^{\equiv}(j,\bar{v}\bar{\ell}\mathbf{c}) S_v^{\equiv}(\ell,\bar{u}\bar{j}\mathbf{c})\ll v^{1+n/2} \left|S_u^{\equiv}(j,\bar{v}\bar{\ell}\mathbf{c})\right|.
    \end{align*}
    Since $u$ is square-free, by Lemma \ref{multiplicative_property},
    \begin{align*}
        \left|S_u^{\equiv}(j,\bar{v}\bar{\ell}\mathbf{c})\right|=\prod_{p\mid u}\left|S_p^{\equiv}(p^{v_p(j)},B_p\mathbf{c})\right|,
    \end{align*}
    where each $B_p$ is an integer coprime to $p$. So by Lemma \ref{prime_bound_on_S}, $\left|S_u^{\equiv}(j,\bar{v}\bar{\ell}\mathbf{c})\right|$ is at most
    \begin{align*}
        2^n\prod_{\substack{p\mid u\\ p\mid j}}(p-1)p^n\prod_{\substack{p\mid u\\ p\nmid j}}2p^{\frac{n+1}{2}}(p,N,B_p\mathbf{c})^{1/2}
        \ll \prod_{\substack{p\mid u}}2p^{\frac{n+1}{2}}(p,N,B_p\mathbf{c})^{1/2}= 2^{\omega(u)}u^{\frac{n+1}{2}}(u,N,\mathbf{c})^{1/2}.
    \end{align*}
    Following the rest of the proof of \cite[Lemma 28]{heath_brown_circle_method}, mutatis mutandis, yields \eqref{bound_on_sum_kloosterman}. Finally, we see by summation by parts that \eqref{bound_on_sum_kloosterman} implies approximation \eqref{c_0_series_estimate} and the absolute convergence of the infinite series therein. 
\end{proof}

\subsection{Relating $r_n^{\equiv}(N)$ to $r_n^{\square}(N)$}

By our estimates on $I_q^{\equiv}(\mathbf{c})$ and $S_q^{\equiv}(\mathbf{c})$, the error of $O(N^{(n-1)/4+\varepsilon})$ follows by \cite[pg.200]{heath_brown_circle_method}, mutatis mutandis. Thus, we turn to considering the leading order term of $r_n^{\equiv}(N)$. Lemma \ref{I_q_0_cong_est} handles the contribution from $I_q^{\equiv}(\mathbf{0})$, so it remains to note the Euler product representation of $p$-adic densities for $\sum_{q\geq1}(Mq)^{-n}S_q^{\equiv}(\mathbf{0})$. This follows upon using \cite[Lemma 2.2]{heath_brown_congruence_restriction} and replacing $F(\mathbf{x})$ with $F_N(\mathbf{x})$ since $S_q^{\equiv}(\mathbf{0})$ is already directly handled by \cite{heath_brown_congruence_restriction} (with the notation $S(q,M,\mathbf{k})$ instead). Furthermore, the convergence of the Euler product is assured by Lemma \ref{sum_of_S_qs} by asserting that $\sum_{q\geq1} (Mq)^{-n}S_q^{\equiv}(\mathbf{0})$ is indeed absolutely convergent.

\begin{lemma}[{\cite[pgs.152--154]{heath_brown_congruence_restriction}}]
    We have
    \begin{align*}
        \sum_{q\geq1} (Mq)^{-n}S_q^{\equiv}(\mathbf{0})=M^{-n}\prod_{p}\sigma_p^{\equiv}(N,n),
    \end{align*}
    where $\sigma_p^{\equiv}(N,n)$ is the $p$-adic density
    \begin{align*}
        \sigma_p^{\equiv}(N,n)\defeq\lim_{d\to\infty}p^{-d(n-1)}{\#\left\{\mathbf{x}\in\left(\ZZ/p^{d+v_p(M)}\ZZ\right)^n:\sum_{i=1}^{n}x_i^2\equiv N\bmod p^d,~x_i\equiv k_i\bmod p^{v_p(M)}\right\}}.
    \end{align*}
\end{lemma}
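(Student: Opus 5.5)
We prove the Euler product identity by the standard route: establish multiplicativity of the local sums, express each prime-power block as a count of solutions modulo prime powers, and then pass to the limit. Write $A(q)=(Mq)^{-n}S_q^{\equiv}(\mathbf{0})$, so that the claim is $\sum_{q}A(q)=M^{-n}\prod_p\sigma_p^{\equiv}(N,n)$.

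\textbf{Step 1 (factorisation).} Factor $q=\prod_p p^{e_p}$ and $M=\prod_p p^{m_p}$. Applying Lemma \ref{multiplicative_property} repeatedly with $\mathbf{c}=\mathbf{0}$ gives
\begin{align*}
    S_q^{\equiv}(M,\mathbf{0})=\prod_{p\mid qM}S_{p^{e_p}}^{\equiv}(p^{m_p},\mathbf{0}).
\end{align*}
The twisting factors $\bar{s}\bar{\ell}$ disappear because $\mathbf{c}=\mathbf{0}$, so the factorisation is exact. For primes $p\mid M$ with $p\nmid q$, the factor $S_1^{\equiv}(p^{m_p},\mathbf{0})$ counts the residues $\mathbf{b}\bmod p^{m_p}$ with $\mathbf{b}\equiv\mathbf{k}$, so it equals $1$. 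Hence
\begin{align*}
    (Mq)^{-n}S_q^{\equiv}(\mathbf{0})=M^{-n}\prod_{p}p^{-ne_p}S_{p^{e_p}}^{\equiv}(p^{m_p},\mathbf{0}),
\end{align*}
so $A(q)M^{n}$ is a multiplicative function of $q$. Lemma \ref{sum_of_S_qs} gives absolute convergence of $\sum_q A(q)$, which justifies rewriting the sum as the Euler product $M^{-n}\prod_p\sum_{e\ge0}p^{-ne}S_{p^e}^{\equiv}(p^{m_p},\mathbf{0})$.

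\textbf{Step 2 (local factors).} Fix $p$, put $m=m_p$, and sum over $0\le e\le d$. Every $a\bmod p^{e}$ coprime to $p$ can be written as $a'p^{d-e}$ with $a'\bmod p^d$, and residues $\mathbf{b}\bmod p^{m+e}$ lift to residues modulo $p^{m+d}$ with multiplicity $p^{n(d-e)}$. Grouping all additive characters modulo $p^d$ by their exact order then gives
\begin{align*}
    \sum_{e=0}^{d}p^{-ne}S_{p^e}^{\equiv}=p^{-nd}\sum_{\substack{\mathbf{b}\bmod p^{m+d}\\ \mathbf{b}\equiv\mathbf{k}\bmod p^m}}\sum_{a\bmod p^d}e\mleft(\frac{aF_N(\mathbf{b})}{p^d}\mright)=p^{-d(n-1)}\#\{\mathbf{b}\bmod p^{m+d}:F_N(\mathbf{b})\equiv0\bmod p^d,\ \mathbf{b}\equiv\mathbf{k}\bmod p^m\}.
\end{align*}
Letting $d\to\infty$ turns the left side into the local factor and the right side into $\sigma_p^{\equiv}(N,n)$. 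This also shows that the limit defining $\sigma_p^{\equiv}(N,n)$ exists, because the local series converges.

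\textbf{Main obstacle.} The delicate point is the normalisation bookkeeping in Step 2 for primes $p\mid M$. One has to check that the lifting multiplicity $p^{n(d-e)}$ matches the factor $(Mq)^{-n}$ exactly, so that a clean $M^{-n}$ survives outside the product and no extra power $p^{-nm}$ is left over. Here $\mathbf{b}$ runs modulo $p^{m+e}$ while $F_N(\mathbf{b})$ is read modulo $p^e$, which is the same setup as \cite[Lemma 2.2]{heath_brown_congruence_restriction}. I would mirror that argument and verify it against the trivial case $q=1$.
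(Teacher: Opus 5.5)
Your proposal is correct and follows the paper's route. The paper cites Heath-Brown's Lemma 2.2, whose content is exactly the multiplicativity of $S_q^{\equiv}(\mathbf{0})$ that you obtain from Lemma \ref{multiplicative_property} with $\mathbf{c}=\mathbf{0}$, combined with the grouping of the characters modulo $p^d$ by exact order, and it takes absolute convergence from Lemma \ref{sum_of_S_qs}, as you do. The normalisation issue you flag as the main obstacle is already settled by your Step 2: the set $\{\mathbf{b}\bmod p^{m+e}:\mathbf{b}\equiv\mathbf{k}\bmod p^{m}\}$ has exactly $p^{ne}$ elements, so once $M^{-n}$ is pulled out in Step 1 no stray factor $p^{-nm}$ remains.
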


We are now able to prove the theorem that is this section's purpose, successfully modifying Heath-Brown's circle method to obtain an explicit asymptotic relation between $r_n^{\equiv}(N)$ and $r_n^{\square}(N)$ in terms of their $p$-adic densities, which we evaluate in the next section. The $p$-adic density $\delta_p^{\square}(N,n)$ of $r_n^{\square}(N)$ is
\begin{align}\label{square_p_adic_density}
    \delta_p^{\square}(N,n)\defeq\lim_{d\to\infty}p^{-d(n-1)}{\#\left\{\mathbf{x}\in\left(\ZZ/p^{d}\ZZ\right)^n:\sum_{i=1}^{n}x_i^2\equiv N\bmod p^d\right\}}.
\end{align}
With this notation, we may now state the theorem sought.

\begin{theorem}\label{modification_theorem}
    Let $n\geq4$ and $\mathbf{k}\in\ZZ^n$ and $M\in\ZZ^+$. Then the number $r_n^{\equiv}(N)$ of solutions over $\ZZ^n$ to $\sum_{i=1}^{n}x_i^2=N$ such that each $x_i\equiv k_i\bmod{M}$ satisfies
    \begin{align*}
        r_n^{\equiv}(N)=M^{-n}\left(\prod_{p\mid M}\frac{\sigma_p^{\equiv}(N,n)}{\delta_p^{\square}(N,n)}\right)r_n^{\square}(N)+O(N^{(n-1)/4+\varepsilon}).
    \end{align*}
\end{theorem}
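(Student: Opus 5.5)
The plan is to combine the asymptotic for $r_n^{\equiv}(N)$ coming out of the modified circle method with the classical asymptotic \eqref{pos_def_form_formula} for $r_n^{\square}(N)$, and then eliminate the common archimedean factor. First, from Proposition \ref{gen_of_thm_2_series}, split the double sum \eqref{series_formula_for_r_cong} into the principal part ($\mathbf{c}=\mathbf{0}$, $q\leq QP^{-\varepsilon}$) and the rest. For the rest, use the bounds on $I_q^{\equiv}(\mathbf{c})$ from Subsection \ref{subsection_estimating_I_q} together with Lemmas \ref{crude_bound_on_S_q_equiv} and \ref{sum_of_S_qs}. Following the dyadic argument of \cite[pg.200]{heath_brown_circle_method} verbatim, with $M$ fixed and affecting only the constants, gives a total of $O(P^{(n-1)/2+\varepsilon})=O(N^{(n-1)/4+\varepsilon})$.

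For the principal part, substitute Lemma \ref{I_q_0_cong_est} to get $c_QQ^{-2}P^n\sigma_\infty(G)\sum_{q\leq QP^{-\varepsilon}}(Mq)^{-n}S_q^{\equiv}(\mathbf{0})$ plus an admissible error. The $O_A((q/Q)^A)$ term is harmless because $q\leq QP^{-\varepsilon}$. Next, complete the $q$-sum using \eqref{c_0_series_estimate}; the tail costs $P^{n-2}(QP^{-\varepsilon})^{(3-n)/2+\varepsilon}$, which is within $O(N^{(n-1)/4+\varepsilon})$ for $n\geq4$ after adjusting $\varepsilon$. Then apply the Euler product lemma of \cite{heath_brown_congruence_restriction}. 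With $Q=P=\sqrt N$ this yields
\begin{align*}
r_n^{\equiv}(N)=\sigma_\infty(G)\,M^{-n}\prod_p\sigma_p^{\equiv}(N,n)\,N^{n/2-1}+O(N^{(n-1)/4+\varepsilon}).
\end{align*}
Setting $M=1$ (so $\sigma_p^{\equiv}=\delta_p^{\square}$) recovers \eqref{pos_def_form_formula} for $r_n^{\square}(N)$ with the same $\sigma_\infty(G)$.

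Next, observe that for $p\nmid M$ the condition $x_i\equiv k_i \bmod p^{0}$ is vacuous, so $\sigma_p^{\equiv}(N,n)=\delta_p^{\square}(N,n)$. Hence
\begin{align*}
\sigma_\infty(G)M^{-n}\prod_p\sigma_p^{\equiv}N^{n/2-1}=M^{-n}\Bigl(\prod_{p\mid M}\frac{\sigma_p^{\equiv}}{\delta_p^{\square}}\Bigr)\sigma_\infty(G)\prod_p\delta_p^{\square}N^{n/2-1},
\end{align*}
and the last factor equals $r_n^{\square}(N)+O(N^{(n-1)/4+\varepsilon})$. The prefactor in front of this error must be bounded uniformly in $N$. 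Since the number of solutions with the congruence restriction is at most the unrestricted count times $p^{nv_p(M)}$, we get $\sigma_p^{\equiv}\leq p^{nv_p(M)}\delta_p^{\square}$, so the ratio is $O_M(1)$. The ratio is also well defined: if $\delta_p^{\square}(N,n)=0$ for some $p\mid M$, then $\sigma_p^{\equiv}=0$ as well, and both sides reduce to the error term. That case cannot actually occur for $n\geq4$, since $\delta_p^{\square}>0$ always (sums of four squares are locally universal with density bounded below). I plan to state the uniform lower bound $\delta_p^{\square}\gg1$ explicitly, so that multiplying and dividing is harmless.

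The main obstacle is not this final algebra but making sure the error analysis of \cite[pg.200]{heath_brown_circle_method} really goes through unchanged. In particular, the contribution of the $\mathbf{c}\neq\mathbf{0}$ terms with $|\mathbf{c}|\leq P^{\varepsilon}$ needs Lemma \ref{sum_of_S_qs} uniformly in $\mathbf{c}$, including the primes dividing $M$, where only the trivial bound from Lemma \ref{prime_bound_on_S} is available. Those primes are finitely many and fixed, so they cost only an $O_M(1)$ factor; I would check this carefully and otherwise cite Heath-Brown's argument mutatis mutandis.
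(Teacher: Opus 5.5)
Your proposal is correct and follows the paper's route: Heath-Brown's argument on pg.~200 applied mutatis mutandis, the Euler product from \cite{heath_brown_congruence_restriction}, factoring out the primes $p\mid M$ via $\sigma_p^{\equiv}(N,n)=\delta_p^{\square}(N,n)$ for $p\nmid M$, and comparison with \cite[Corollary 1]{heath_brown_circle_method}. Your inequality $\sigma_p^{\equiv}\le p^{nv_p(M)}\delta_p^{\square}$ also usefully makes explicit the uniform boundedness of the prefactor multiplying the error term, which the paper leaves implicit.

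One correction: the uniform lower bound $\delta_p^{\square}(N,n)\gg1$ that you plan to state is false for $p=2$, $n=4$. The paper's own computation gives $\delta_2^{\square}(N,4)=3\cdot 2^{-v_2(N)}$ whenever $2\mid N$, which tends to $0$. You do not need this bound, however: positivity of $\delta_p^{\square}$ together with your upper bound on the ratio is all the final step requires.
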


\begin{proof}
    We have shown that $I_q^{\equiv}(\mathbf{c})=0$ holds for $q\geq P$ as it does for $I_q(\mathbf{c})$, and have sufficiently modified or generalized \cite[Lemmas 13, 16, 19, 22, 25, 23, 26, 28]{heath_brown_circle_method} and the relevant portion of \cite[Lemma 31]{heath_brown_circle_method}. Thus, following the proof in \cite[pg.200]{heath_brown_circle_method}, mutatis mutandis, yields
    \begin{align*}
        r_n^{\equiv}(N)&=\sigma_{\infty}(G)M^{-n}\left(\prod_{p}\sigma_p^{\equiv}(N,n)\right)N^{n/2-1}+O(N^{(n-1)/4+\varepsilon})\\
        &=\left(M^{-n}\prod_{p\mid M}\sigma_p^{\equiv}(N,n)\right)\left(\sigma_{\infty}(G)\prod_{p\nmid M}\delta^{\square}_{p}(N,n)N^{n/2-1}\right)+O(N^{(n-1)/4+\varepsilon})
    \end{align*}
    upon noting that $\sigma_{p}^{\equiv}(N,n)=\delta_p^{\square}(N,n)$ when $p\nmid M$. By \cite[Corollary 1]{heath_brown_circle_method}, we see that $\sigma_{\infty}(G)\prod_{p}\delta^{\square}_{p}(N,n)N^{n/2-1}=r_n^{\square}(N)+O(N^{(n-1)/4+\varepsilon})$, yielding the desired asymptotic relation.
\end{proof}

\section{Evaluating the $p$-adic Densities}\label{evaluating_the_p_adic_densities}

Having proven Theorem \ref{modification_theorem}, we finally turn to evaluating the square and $k$-gonal $p$-adic densities. The square $p$-adic density $\delta_p^{\square}(N,n)$ is as defined in \eqref{square_p_adic_density}, and the $k$-gonal $p$-adic density $\delta_p^{\ksymbol{k}}(N,n)$ is
    \begin{align*}
        \delta_p^{\ksymbol{k}}(N,n)\defeq\lim_{d\to\infty}p^{-d(n-1)}{\#\left\{\mathbf{x}\in\left(\ZZ/p^{d+v_p(M)}\ZZ\right)^n:\sum_{i=1}^{n}x_i^2\equiv X_N\bmod p^d,~x_i\equiv k\bmod p^{v_p(M)}\right\}},
    \end{align*}
where $X_N=8(k-2)N+n(k-4)^2$ as before and $M=2(k-2)$. With this notation, $\mathbf{k}=(k,\dots,k)$ gives $\delta_{p}^{\ksymbol{k}}(N,n)=\sigma_p^{\equiv}(X_N,n)$. Recall that we care about $\sigma_p^{\equiv}(X_N,n)$ because of Theorem \ref{modification_theorem} and $r_n^{\ksymbol{k}}(N)=r_n^{\equiv}(X_N)$.

We evaluate the $p$-adic densities $\delta_p^{\ksymbol{k}}(N,n)$ and $\delta_p^{\square}(N,n)$ by the method of exponential sums: for $\delta_p^{\square}(N,n)$, we have
\begin{align*}
    \delta_p^{\square}(N,n)=\lim_{d\to\infty}p^{-nd}\sum_{a\bmod p^d}\left(\sum_{r\bmod p^d}e\mleft(\frac{ar^2}{p^d}\mright)\right)^n e\mleft(\frac{-aN}{p^d}\mright),
\end{align*}
and for $\delta_p^{\ksymbol{k}}(N,n)$, letting $v=v_p(2(k-2))$, we have
\begin{align*}
     \delta_p^{\ksymbol{k}}(N,n)&=\lim_{d\to\infty}p^{-d(n-1)}\left(p^{-d}\sum_{a\bmod p^d}\left(\sum_{\substack{r\bmod  p^{d+v}\\ r\equiv k\bmod{p^{v}}}}e\mleft(\frac{ar^2}{p^d}\mright)\right)^n e\mleft(\frac{-aX_N}{p^d}\mright)\right)\\
     &=\lim_{d\to\infty}p^{-nd}\sum_{a\bmod p^d}\left(\sum_{r\bmod p^d}e\mleft(\frac{a(rp^{v}+k)^2}{p^d}\mright)\right)^n e\mleft(\frac{-aX_N}{p^d}\mright).
\end{align*}

We find that the computation of $\delta_p^{\ksymbol{k}}(N,n)$ is largely quite pleasant since the exponential sum $\sum_{r\bmod p^d}e\mleft({a(rp^{v}+k)^2}/{p^d}\mright)$ has a simple evaluation, except in the case where $p=2$ and $k\equiv0\pmod{4}$. Indeed, aside from this exceptional case, for this computation, one really only needs some standard facts about generalized quadratic Gauss sums. Remarkably, we find that the evaluation of the polygonal $p$-adic densities is uniform in $N$ and $n$ when $p\mid k-2$ is odd, and is uniform in $N$ and $n$ for $p=2$ if $k\not\equiv0\pmod{4}$. In contrast to the polygonal $p$-adic densities, the computation of $\delta_p^{\square}(N,n)$ is incredibly tedious due to the more complicated Gauss' evaluation of $\sum_{r\bmod p^d}e\mleft({Ar^2}/{p^d}\mright)$, as recalled in \eqref{gauss_formula_berndt}.

We will use the characteristic function notation $\mathbf{1}_{S}$ throughout this section in our results and proofs thereof: for some statement $S$, we set $\mathbf{1}_S=1$ if $S$ is true, and $\mathbf{1}_{S}=0$ if $S$ is false.

\subsection{The square $p$-adic densities}\label{subsection_square_densities}

We first compute the square $p$-adic densities, which, once done, suffice for Theorem \ref{more_general_theorem} once paired with Theorem \ref{modification_theorem}. The closed formulas for $\delta_p^{\square}(N,n)$ are complicated, and their proofs are incredibly tedious (especially for $p=2$).

For $(A,C)=1$, we recall (see \cite[Chapter 1]{berndt1998gauss}, for example) the evaluation of the quadratic Gauss sum
\begin{align}\label{gauss_formula_berndt}
    \sum_{r\bmod C}e\mleft(\frac{Ar^2}{C}\mright)=\begin{cases}
        (1+i)\varepsilon_A^{-1}\sqrt{C}\left(\frac{C}{A}\right) & \text{if~}c\equiv0\pmod{4},\\
        \varepsilon_C \sqrt{C}\left(\frac{A}{C}\right) & \text{if~}c\equiv 1\pmod{2},\\
        0 & \text{if~}c\equiv2\pmod{4},
    \end{cases}
\end{align}
where $\left(\frac{\cdot}{\cdot}\right)$ is the Jacobi symbol and $\varepsilon_m\defeq i^{(m-1)^2/4}$. We first evaluate $\delta_p^{\square}(N,n)$ for odd $p$, which we can state by a single closed form, albeit somewhat unnaturally.

\begin{prop}\label{odd_adic_square}
    Let $p$ be an odd prime and $v=v_p(N)$ and $N_p=p^{-v}N$ be the $p$-free part of $N$. Then $\delta_p^{\square}(N,n)=1+p^{-n/2}f_p(N,n)$, where
    \begin{align*}
        f_p(N,n)=(p&-1)\left(\cos\mleft(\frac{\pi n}{2}\mright)(-1)^{n(p+1)/4}\frac{1-p^{2\lceil v/2\rceil(1-n/2)}}{1-p^{2-n}}+p^{1-n/2}\frac{1-p^{2\lceil (v-1)/2\rceil(1-n/2)}}{1-p^{2-n}}\right)\\
        &-(-1)^{\lfloor n/2\rfloor(v+1)(p-1)/2}\left(-\left(\frac{N_p}{p}\right)\right)^{n(v+1)}p^{v(1-n/2)+(1-(-1)^n)(1+(-1)^v)/8}.
    \end{align*}
\end{prop}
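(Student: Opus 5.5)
We will evaluate $\delta_p^{\square}(N,n)$ directly from the exponential-sum formula
\begin{align*}
\delta_p^{\square}(N,n)=\lim_{d\to\infty}p^{-nd}\sum_{a\bmod p^d}G(a,p^d)^n e\mleft(\frac{-aN}{p^d}\mright),\qquad G(a,C)=\sum_{r\bmod C}e\mleft(\frac{ar^2}{C}\mright).
\end{align*}
We first sort the $a$ by $j=d-v_p(a)$. Writing $a=p^{d-j}a'$ with $(a',p)=1$ gives $G(a,p^d)=p^{d-j}G(a',p^j)$, and then \eqref{gauss_formula_berndt} (the odd-modulus case) gives $G(a',p^j)=\varepsilon_{p^j}p^{j/2}\left(\frac{a'}{p^j}\right)$. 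The $j=0$ term contributes exactly $1$. Hence
\begin{align*}
\delta_p^{\square}(N,n)=1+\sum_{j\geq1}p^{-nj/2}\varepsilon_{p^j}^n\sum_{\substack{a'\bmod p^j\\ (a',p)=1}}\left(\frac{a'}{p}\right)^{nj}e\mleft(\frac{-a'N}{p^j}\mright).
\end{align*}
The limit in $d$ disappears because the sum stabilizes, and convergence for $n\geq4$ is clear.

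The inner sum is either a Ramanujan sum or a twisted Gauss sum, depending on the parity of $nj$. When $nj$ is even it is $c_{p^j}(N)$, which equals $\phi(p^j)$ if $j\leq v$, equals $-p^{j-1}$ if $j=v+1$, and vanishes if $j\geq v+2$. When $nj$ is odd it is a character sum, nonzero only when $j=v+1$; then it equals $p^{v}\left(\frac{-N_p}{p}\right)\varepsilon_p\sqrt p$. In both cases only $j\leq v+1$ survive.

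The terms $1\leq j\leq v$ give $(p-1)\sum_{j=1}^{v}p^{j(1-n/2)-1}\varepsilon_{p^j}^n\mathbf 1_{nj\text{ even}}$. We then split by the parity of $j$:
\begin{itemize}
\item For even $j$ we have $\varepsilon_{p^j}=1$, so these terms form a geometric series in $p^{2(1-n/2)}$. This produces the factor $p^{1-n/2}(1-p^{2\lceil(v-1)/2\rceil(1-n/2)})/(1-p^{2-n})$ after factoring out the first term.
\item For odd $j$ we have $\varepsilon_{p^j}=\varepsilon_p$, and $nj$ even forces $n$ even. Then $\varepsilon_p^n=i^{n(p-1)^2/4}$, which we rewrite as $\cos(\pi n/2)(-1)^{n(p+1)/4}$ with the convention that this vanishes for odd $n$. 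This produces the $\lceil v/2\rceil$ series.
\end{itemize}
We will check the bookkeeping of which series carries the leading factor $p^{1-n/2}$ against the displayed formula. The overall factor $p^{-n/2}$ in the statement is absorbed by pulling out $p^{-n/2}$ from the $j=1$ scale.

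The $j=v+1$ term is the final summand, and we expect it to be the main obstacle because of the sign bookkeeping. When $n(v+1)$ is even it equals $-p^{v}p^{-n(v+1)/2}\varepsilon_{p^{v+1}}^n$. When $n(v+1)$ is odd it equals $p^{-n(v+1)/2}\varepsilon_p^{n+1}p^{v+1/2}\left(\frac{-N_p}{p}\right)$. We then need to unify the two cases into the single expression
\begin{align*}
-(-1)^{\lfloor n/2\rfloor(v+1)(p-1)/2}\left(-\left(\frac{N_p}{p}\right)\right)^{n(v+1)}p^{v(1-n/2)+\cdots}.
\end{align*}
To do this we compute $\varepsilon_p^{n+1}=(-1)^{((n+1)/2)(p-1)/2}$ for odd $n$, use $\left(\frac{-1}{p}\right)=(-1)^{(p-1)/2}$, and use $\varepsilon_{p^{v+1}}^n=(-1)^{(n/2)(v+1)(p-1)/2}$ for $n$ even. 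In the odd case the power of $p$ is $p^{v(1-n/2)+(1-n)/2+1/2-\,\cdots}$, which after extracting $p^{-n/2}$ should match the exponent $(1-(-1)^n)(1+(-1)^v)/8$. We will verify this exponent and the signs by checking small cases, for example $n=5$ and $v\in\{0,1\}$, and then carry out the general reconciliation of parities.
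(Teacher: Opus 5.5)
Your proposal is correct and follows essentially the same route as the paper: grouping $a$ by its reduced denominator $p^j$ and recognizing Ramanujan sums and twisted Gauss sums is a repackaging of the paper's decomposition $a=jp^{c-1}+tp^c$ with the $t$-sum done first (your $j$ is the paper's $d-c+1$), and it leads to the same parity split and geometric series. The sign reconciliation you flag closes directly: $\varepsilon_p^{n+1}\left(\frac{-1}{p}\right)=(-1)^{\frac{n-1}{2}\cdot\frac{p-1}{2}}$ for odd $n$, the odd-case exponent is simply $-\tfrac{n(v+1)}{2}+v+\tfrac12=-\tfrac n2+v(1-\tfrac n2)+\tfrac12$, and the remaining case ($n$ odd, $v$ odd) is immediate from $\varepsilon_{p^{v+1}}=1$, so no small-case checking is needed.
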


\begin{proof}
    Let $a\neq0$ be modulo $p^d$ and $a_p=ap^{-v_p(a)}$ be its $p$-free part. We then have
    \begin{align}\label{quad_gauss_sum_eval_odd}
        \sum_{r\bmod p^d}e\mleft(\frac{ar^2}{p^d}\mright)=p^{v_p(a)}\sum_{t\bmod p^{d-v_p(a)}}e\mleft(\frac{a_p t^2}{p^{d-v_p(a)}}\mright)=p^{v_p(a)}\varepsilon_{p^{d-v_p(a)}}\left(\frac{a_p}{p^{d-v_p(a)}}\right) p^{(d-v_p(a))/2},
    \end{align}
    where the first equality follows from the substitution $r\mapsto t+bp^{d-v_p(a)}$ with $b,t$ modulo $p^{v_p(a)}$ and the second equality follows from Gauss' formula \eqref{gauss_formula_berndt}. For a function $f$, we note that we can partition the sum $\sum_{a\bmod p^d}f(a)$ via the $p$-adic valuation of the variable $a$ by
    \begin{align*}
        \sum_{a\bmod p^d}f(a)=f(0)+\sum_{j=1}^{p-1}\sum_{c=1}^{d}\sum_{t\bmod p^{d-c}} f(jp^{c-1}+tp^c).
    \end{align*}
    Using this formula on the exponential sum formula for $\delta_p^{\square}(N,n)$, we have that $\delta_p^{\square}(N,n)$ is the limit as $d\to\infty$ of
    \begin{align}\label{decomp_of_delta_p_square_sum}
        1+p^{-nd}\sum_{j=1}^{p-1}\sum_{c=1}^{d}\sum_{t\bmod p^{d-c}}\left(\sum_{r\bmod p^d}e\mleft(\frac{(jp^{c-1}+tp^c)r^2}{p^d}\mright)\right)^n e\mleft(-\frac{(jp^{c-1}+tp^c)N}{p^d}\mright).
    \end{align}
    By \eqref{quad_gauss_sum_eval_odd},
    \begin{align*}
        \sum_{r\bmod p^d}e\mleft(\frac{(jp^{c-1}+tp^c)r^2}{p^d}\mright)=p^{c-1}\varepsilon_{p^{d-c+1}}\left(\frac{j+tp}{p^{d-c+1}}\right)p^{(d-c+1)/2}=\left(\varepsilon_p\left(\frac{j}{p}\right)\right)^{\mathbf{1}_{c\equiv d\bmod 2}}p^{(d+c-1)/2}.
    \end{align*}
    So the sum over $t$ modulo $p^{d-c}$ in \eqref{decomp_of_delta_p_square_sum} equals
    \begin{align*}
        e\mleft(\frac{-jN}{p^{d-c+1}}\mright)\left(\left(\varepsilon_p\left(\frac{j}{p}\right)\right)^{\mathbf{1}_{c\equiv d\bmod 2}}p^{(d+c-1)/2}\right)^n\sum_{t\bmod p^{d-c}}e\mleft(\frac{-tN}{p^{d-c}}\mright)\\
        =p^{n(d+c-1)/2+d-c}e\mleft(\frac{-jN}{p^{d-c+1}}\mright)\left(\varepsilon_p\left(\frac{j}{p}\right)\right)^{n(\mathbf{1}_{c\equiv d\bmod 2})}\mathbf{1}_{v_p(N)\geq d-c}.
    \end{align*}
    So the sum over $1\leq c\leq d$ in \eqref{decomp_of_delta_p_square_sum} equals
    \begin{align}\label{c_sums}
        &~~~~~\sum_{c=d-v_p(N)}^{d}p^{n(d+c-1)/2+d-c}e\mleft(\frac{-jN}{p^{d-c+1}}\mright)\left(\varepsilon_p\left(\frac{j}{p}\right)\right)^{n(\mathbf{1}_{c\equiv d\bmod 2})}\nonumber\\
        &=p^{nd-n/2}\sum_{c=0}^{v_p(N)} p^{c(1-n/2)}e\mleft(\frac{-jN}{p^{c+1}}\mright)\left(\varepsilon_p\left(\frac{j}{p}\right)\right)^{n(\mathbf{1}_{c \text{~even}})}\nonumber\\
        &=p^{nd-n/2}\left(\sum_{c=0}^{\lfloor v_p(N)/2\rfloor}p^{2c(1-n/2)}\left(\varepsilon_p\left(\frac{j}{p}\right)\right)^n e\mleft(\frac{-jN}{p^{2c+1}}\mright)+\sum_{c=0}^{\lfloor (v_p(N)-1)/2\rfloor}p^{(2c+1)(1-n/2)}e\mleft(\frac{-jN}{p^{2c+2}}\mright)\right).
    \end{align}
    Passing the sum over $1\leq j\leq p-1$ inside, we are interested in evaluating
    \begin{align}\label{def_first_c_sum}
        \sum_{c=0}^{\lfloor v_p(N)/2\rfloor}p^{2c(1-n/2)}\sum_{j=1}^{p-1}\left(\varepsilon_p\left(\frac{j}{p}\right)\right)^n e\mleft(\frac{-jN}{p^{2c+1}}\mright)
    \end{align}
    and
    \begin{align}\label{def_second_c_sum}
        \sum_{c=0}^{\lfloor (v_p(N)-1)/2\rfloor}p^{(2c+1)(1-n/2)}\sum_{j=1}^{p-1}e\mleft(\frac{-jN}{p^{2c+2}}\mright).
    \end{align}
    We begin by handling \eqref{def_first_c_sum}. If $c<v_p(N)/2$, then
    \begin{align}\label{first_c_sum_small}
        \sum_{j=1}^{p-1}\left(\varepsilon_p\left(\frac{j}{p}\right)\right)^n e\mleft(\frac{-jN}{p^{2c+1}}\mright)=\varepsilon_p^n\sum_{j=1}^{p-1}\left(\frac{j}{p}\right)^n=\varepsilon_p^n(p-1)\mathbf{1}_{n\text{~even}}.
    \end{align}
    If $c=v_p(N)/2$, then
    \begin{align}\label{first_c_sum_edge}
        \sum_{j=1}^{p-1}\left(\varepsilon_p\left(\frac{j}{p}\right)\right)^n e\mleft(\frac{-jN}{p^{2c+1}}\mright)=\varepsilon_p^n\sum_{j=1}^{p-1}\left(\frac{j}{p}\right)^n e\mleft(\frac{-jN_p}{p}\mright)=\begin{cases}
            -\varepsilon_p^n & n\text{~even,}\\
            \varepsilon_p^{n}\left(\frac{-N_p}{p}\right)\varepsilon_p p^{1/2} & n\text{~odd,}
        \end{cases}
    \end{align}
    by the standard Gauss sum evaluation. We note the simple identity $\varepsilon_p^{n}\left(\frac{-N_p}{p}\right)\varepsilon_p p^{1/2}=(-1)^{(n+1)(p-1)/4}\left(\frac{-N_p}{p}\right)p^{1/2}=(-1)^{(n-1)(p-1)/4}\left(\frac{N_p}{p}\right)p^{1/2}$ for $n$ odd, which we will use instead as it allows us to nicely unify our final expressions that depend on the parity on $n$.


    Suppose that $v_p(N)$ is even. Then if $n$ is even, by \eqref{first_c_sum_small} and \eqref{first_c_sum_edge},
    \begin{align*}
        \sum_{j=1}^{p-1}\sum_{c=0}^{\lfloor v_p(N)/2\rfloor}p^{2c(1-n/2)}\left(\varepsilon_p \left(\frac{j}{p}\right)\right)^n e\mleft(\frac{-jN}{p^{2c+1}}\mright)
        =\sum_{c=0}^{v_p(N)/2-1}p^{2c(1-n/2)}\varepsilon_p^n(p-1)-\varepsilon_p^n p^{v_p(N)(1-n/2)}\\
        =\varepsilon_p^n (p-1)\frac{1-p^{v_p(N)(1-n/2)}}{1-p^{2-n}}-(-1)^{{n(p-1)}/{4}} p^{v_p(N)(1-n/2)}.
    \end{align*}
    If $n$ is odd, then by \eqref{first_c_sum_small} and \eqref{first_c_sum_edge},
    \begin{align*}
        \sum_{j=1}^{p-1}\sum_{c=0}^{\lfloor v_p(N)/2\rfloor}p^{2c(1-n/2)}\left(\varepsilon_p \left(\frac{j}{p}\right)\right)^n e\mleft(\frac{-jN}{p^{2c+1}}\mright)
        =(-1)^{(n-1)(p-1)/4}\left(\frac{N_p}{p}\right)p^{v_p(N)(1-n/2)+1/2}.
    \end{align*}
    Now suppose that $v_p(N)$ is odd. Then
    \begin{align*}
        \sum_{j=1}^{p-1}\sum_{c=0}^{\lfloor v_p(N)/2\rfloor}p^{2c(1-n/2)}\left(\varepsilon_p \left(\frac{j}{p}\right)\right)^n e\mleft(\frac{-jN}{p^{2c+1}}\mright)
        =\varepsilon_p^n(p-1)\frac{1-p^{(v_p(N)+1)(1-n/2)}}{1-p^{2-n}}\mathbf{1}_{n~\text{even}},
    \end{align*}
    by \eqref{first_c_sum_small} since we avoid the edge case where $c=v_p(N)/2$. This completes the evaluation of \eqref{def_first_c_sum}.

    We now handle the evaluation of \eqref{def_second_c_sum}. If $c<(v_p(N)-1)/2$, then $\sum_{j=1}^{p-1}e(-jN/p^{2c+2})=p-1$, and if $c=(v_p(N)-1)/2$, then $\sum_{j=1}^{p-1}e(-jN/p^{2c+2})=-1$. Thus, if $v_p(N)$ is even, then
    \begin{align*}
        \sum_{j=1}^{p-1}\sum_{c=0}^{\lfloor(v_p(N)-1)/2\rfloor}p^{(2c+1)(1-n/2)}e\mleft(\frac{-jN}{p^{2c+2}}\mright)=(p-1)\sum_{c=0}^{(v_p(N)-2)/2} p^{(2c+1)(1-n/2)}\\
        =(p-1)p^{1-n/2}\frac{1-p^{v_p(N)(1-n/2)}}{1-p^{2-n}}.
    \end{align*}
    If $v_p(N)$ is odd, then
    \begin{align*}
        \sum_{j=1}^{p-1}\sum_{c=0}^{\lfloor(v_p(N)-1)/2\rfloor}p^{(2c+1)(1-n/2)}e\mleft(\frac{-jN}{p^{2c+2}}\mright)=(p-1)\sum_{c=0}^{(v_p(N)-1)/2-1}p^{(2c+1)(1-n/2)}-p^{v_p(N)(1-n/2)}\\
        =(p-1)p^{1-n/2}\frac{1-p^{(v_p(N)-1)(1-n/2)}}{1-p^{2-n}}-p^{v_p(N)(1-n/2)}.
    \end{align*}
    Combining our evaluations of \eqref{def_first_c_sum} and \eqref{def_second_c_sum} with \eqref{decomp_of_delta_p_square_sum} and \eqref{c_sums} gives that, for $v_p(N)$ odd, $\delta_p^{\square}(N,n)$ equals
    \begin{align*}
        1+p^{-n/2}\left((p-1)\left(\varepsilon_p^n\frac{1-p^{(v_p(N)+1)(1-n/2)}}{1-p^{2-n}}\mathbf{1}_{n~\text{even}}+p^{1-n/2}\frac{1-p^{(v_p(N)-1)(1-n/2)}}{1-p^{2-n}}\right)-p^{v_p(N)(1-n/2)}\right).
    \end{align*}
    Turning our attention to the case where $v_p(N)$ is even, note that
    \begin{align*}
        -(-1)^{\lfloor n/2\rfloor(p-1)/2}\left(-\left(\frac{N_p}{p}\right)\right)^n p^{v_p(N)(1-n/2)+(1-(-1)^n)/4}
    \end{align*}
    equals $-(-1)^{n(p-1)/4} p^{v_p(N)(1-n/2)}$ if $n$ is even, and equals $(-1)^{(n-1)(p-1)/4}\left(\frac{N_p}{p}\right)p^{v_p(N)(1-n/2)+1/2}$ if $n$ is odd. So by our evaluations of \eqref{def_first_c_sum} and \eqref{def_second_c_sum} together with \eqref{decomp_of_delta_p_square_sum} and \eqref{c_sums}, we have that if $v_p(N)$ is even, $\delta_p^{\square}(N,n)$ equals
    \begin{align*}
        1+p^{-n/2}\Bigg((p-1)\left(\varepsilon_p^n\frac{1-p^{v_p(N)(1-n/2)}}{1-p^{2-n}}\mathbf{1}_{n\text{~even}}+p^{1-n/2}\frac{1-p^{v_p(N)(1-n/2)}}{1-p^{2-n}}\right)\\
        -(-1)^{\lfloor n/2\rfloor(p-1)/2}\left(-\left(\frac{N_p}{p}\right)\right)^n p^{v_p(N)(1-n/2)+(1-(-1)^n)/4}\Bigg).
    \end{align*}
    Upon noting that $\varepsilon_p^n\mathbf{1}_{n~\text{even}}=\cos(\pi n/2)(-1)^{n(p+1)/4}$, we see that $\delta_p^{\square}(N,n)=1+p^{-n/2} f_p(N,n)$ for all $N$ and $n$, as claimed.
\end{proof}

Finally, we move to computing $\delta_2^{\square}(N,n)$, which is the most difficult of the density computations. In our evaluation of $\delta_2^{\square}(N,n)$, we encounter a geometric sum twisted by a sum of cosines, which merely forces us to break our formula into cases, but nevertheless makes the computations quite tedious. This twisted geometric sum runs over the positive integers up to $v_2(N)+2$, so we are interested in values of our sum of cosines over that range.

\begin{lemma}\label{lemma_cosine_eval}
    Let $n,N\in\ZZ^+$ and $1\leq c\leq v+2$, where $v=v_2(N)$. Set $N_2=2^{-v}N$ to be the odd part of $N$. Define $C_n(c,N)=\cos\mleft({\pi n}/{4}-{\pi N}/{2^c}\mright)+(-1)^{nc}\cos\mleft({3\pi n}/{4}-{3\pi N}/{2^c}\mright)$. Since $C_{n+4}(c,N)=-C_{n}(c,N)$, it suffices to evaluate $C_n(c,N)$ only for $n\in\{0,1,2,3\}$. For these cases, we have the evaluations
        \begin{align*}
            C_0(c,N)&=\begin{cases}
                2 & \text{if~}c<v,\\
                -2 & \text{if~}c=v,\\
                0 & \text{if~}c>v.
            \end{cases}\\
            C_1(c,N)&=\begin{cases}
                2^{1/2}\mathbf{1}_{c~\text{\normalfont odd}} & \text{if~}c<v,\\
                -2^{1/2}\mathbf{1}_{c~\text{\normalfont odd}} & \text{if~}c=v,\\
                2^{1/2}(-1)^{(N_2-1)/2}\mathbf{1}_{c\text{~\normalfont odd}} & \text{if~}c=v+1,\\
                2^{1/2}\left(\sin(N_2\pi/4)+\cos(N_2\pi/4)\right)\mathbf{1}_{c\text{\normalfont~even}} & \text{if~}c=v+2.
            \end{cases}\\
            C_2(c,N)&=\begin{cases}
                0 & \text{if~}c\neq v+1,\\
                2(-1)^{(N_2-1)/2} & \text{if~}c=v+1.
            \end{cases}\\
            C_3(c,N)&=\begin{cases}
                -2^{1/2}\mathbf{1}_{c~\text{\normalfont odd}} & \text{if~}c<v,\\
                2^{1/2}\mathbf{1}_{c~\text{\normalfont odd}} & \text{if~}c=v,\\
                2^{1/2}(-1)^{(N_2-1)/2}\mathbf{1}_{c\text{~\normalfont odd}} & \text{if~}c=v+1,\\
                2^{1/2}\left(\sin(N_2\pi/4)-\cos(N_2\pi/4)\right)\mathbf{1}_{c\text{\normalfont~even}} & \text{if~}c=v+2.
            \end{cases}
        \end{align*}
\end{lemma}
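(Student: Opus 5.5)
The plan is a direct case analysis driven by the size of $N/2^c$ modulo $2$, after first establishing the periodicity claim. For $C_{n+4}(c,N)=-C_n(c,N)$: replacing $n$ by $n+4$ shifts the first cosine's argument by $\pi$ and the second's by $3\pi$, so both cosines change sign. The factor $(-1)^{(n+4)c}=(-1)^{nc}$ is unchanged. Hence $C_{n+4}=-C_n$, and it suffices to treat $n\in\{0,1,2,3\}$.

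For the main computation, write $N=2^vN_2$ with $N_2$ odd and set $\theta=\pi N/2^c=\pi 2^{v-c}N_2$. I split according to $c$ relative to $v$:
\begin{itemize}
\item[(i)] If $c<v$, then $\theta\in 2\pi\ZZ$, so $\theta\equiv 0 \pmod{2\pi}$.
\item[(ii)] If $c=v$, then $\theta=\pi N_2\equiv\pi\pmod{2\pi}$.
\item[(iii)] If $c=v+1$, then $\theta=\pi N_2/2$, and $\theta\equiv (-1)^{(N_2-1)/2}\pi/2\pmod{2\pi}$.
\item[(iv)] If $c=v+2$, then $\theta=\pi N_2/4$ with $N_2$ odd.
\end{itemize}
In each case I substitute into $C_n=\cos(\pi n/4-\theta)+(-1)^{nc}\cos(3\pi n/4-3\theta)$.

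For $n=0$ this gives $\cos\theta+\cos 3\theta$. Cases (i) and (ii) give $2$ and $-2$. In cases (iii) and (iv), $\cos\theta+\cos3\theta=2\cos2\theta\cos\theta$ vanishes, since $\cos\theta=0$ in (iii) and $\cos 2\theta=\cos(\pi N_2/2)=0$ in (iv). This matches "$0$ if $c>v$".

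For $n=2$ we get $\cos(\pi/2-\theta)+\cos(3\pi/2-3\theta)=\sin\theta-\sin3\theta=-2\cos2\theta\sin\theta$. This vanishes in cases (i), (ii) and (iv). In case (iii) it equals $2\sin\theta=2(-1)^{(N_2-1)/2}$.

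For odd $n\in\{1,3\}$ the sign $(-1)^{c}$ matters. When $c$ is odd, use the product formula: $C_n=\cos(\pi n/4-\theta)-\cos(3\pi n/4-3\theta)=-2\sin(\pi n/2-2\theta)\sin(-\pi n/4+\theta)$. For $n=1$ this is $-2\cos(2\theta)\sin(\theta-\pi/4)$, and $n=3$ is handled similarly. Evaluating in cases (i)–(iii) gives $\pm2^{1/2}$ with the stated signs. Case (iv) with $c$ odd must vanish, since $\cos 2\theta=0$ there.

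When $c$ is even, use the sum formula: $C_n=2\cos(\pi n/2-2\theta)\cos(\theta-\pi n/4)$. For $n=1$ this is $2\sin 2\theta\cos(\theta-\pi/4)$. In cases (i)–(iii), $\sin2\theta=0$, so $C_n$ vanishes; this accounts for the factor $\mathbf{1}_{c\text{ odd}}$ there. In case (iv), $\sin 2\theta=\sin(\pi N_2/2)=(-1)^{(N_2-1)/2}$. The result is then $2(-1)^{(N_2-1)/2}\cos(\pi N_2/4-\pi/4)$. This should simplify to $2^{1/2}(\sin(N_2\pi/4)+\cos(N_2\pi/4))$, which can be checked over the four classes $N_2\bmod 8$.

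The only real obstacle is the sign bookkeeping in the odd-$n$ cases, particularly case (iv) and the $n=3$ signs. I would handle it by tabulating $\theta\bmod 2\pi$ over $N_2\bmod 8$ in case (iv) and over $N_2\bmod 4$ in case (iii), then comparing each entry with the stated closed forms.
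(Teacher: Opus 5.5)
Your proposal is correct and uses the same case split as the paper: $c<v$, $c=v$, $c=v+1$, $c=v+2$, with the parity of $c$ mattering only for odd $n$. The steps you defer also check out, namely the $n=3$ signs and $2(-1)^{(N_2-1)/2}\cos(\pi(N_2-1)/4)=2^{1/2}(\sin(N_2\pi/4)+\cos(N_2\pi/4))$. The only difference is cosmetic: you factor with sum-to-product formulas, so each vanishing case comes from a single factor $\cos 2\theta$ or $\sin 2\theta$, whereas the paper expands $\cos(\pi n/4-x)$ as $2^{-1/2}(\sin x\pm\cos x)$ and evaluates $\sin$ and $\cos$ at $N\pi/2^c$ and $3N\pi/2^c$ directly.
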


\begin{proof}
    We first note the $n=0,2$ cases since they are simpler. If $n=0$, then
    \begin{align*}
        C_n(c,N)=\cos\mleft(\frac{\pi N}{2^c}\mright)+\cos\mleft(\frac{3\pi N}{2^c}\mright)=\begin{cases}
                2 & \text{if~}1\leq c\leq v-1,\\
                -2 & \text{if~}c=v,\\
                0 & \text{if~}c>v.
            \end{cases}
    \end{align*}
    If $n=2$, then
    \begin{align*}
        C_n(c,N)=\sin\mleft(\frac{\pi N}{2^c}\mright)-\sin\mleft(\frac{3\pi N}{2^c}\mright)=\begin{cases}
                0 & \text{if~}c\neq v+1,\\
                2(-1)^{(N_2-1)/2} & \text{if~}c=v+1.
            \end{cases}
    \end{align*}
    We now consider the $n=1$ case. We then have
    \begin{align*}
        \cos\mleft(\frac{n\pi}{4}-\frac{N\pi}{2^c}\mright)=\frac{1}{\sqrt{2}} \left(\sin\mleft(\frac{N\pi}{2^c}\mright)+\cos\mleft(\frac{N\pi}{2^c}\mright)\right)
    \end{align*}
    and
    \begin{align*}
        \cos\mleft(\frac{3\pi n}{4}-\frac{3\pi N}{2^c}\mright)=\frac{1}{\sqrt{2}} \left(\sin\mleft(\frac{3N\pi}{2^c}\mright)-\cos\mleft(\frac{3N\pi}{2^c}\mright)\right)
    \end{align*}
    so that
    \begin{align*}
        C_1(c,N)=\frac{1}{\sqrt{2}}\left(\sin\mleft(\frac{N\pi}{2^c}\mright)+\cos\mleft(\frac{N\pi}{2^c}\mright)+(-1)^{c}\left(\sin\mleft(\frac{3N\pi}{2^c}\mright)-\cos\mleft(\frac{3N\pi}{2^c}\mright)\right)\right).
    \end{align*}
    So if $c\leq v$ and $c$ is even, then $C_1(c,N)=2^{-1/2}(\sin\mleft({N\pi}/{2^c}\mright)+\cos\mleft({N\pi}/{2^c}\mright)+\sin\mleft({3N\pi}/{2^c}\mright)-\cos\mleft({3N\pi}/{2^c}\mright))=0$ since $\sin(N\pi/2^c)=\sin(3N\pi/2^c)=0$ and $\cos(N\pi/2^c)=\cos(3N\pi/2^c)$. If $c\leq v$ and $c$ is odd, then $C_1(c,N)=2^{-1/2}(\sin\mleft({N\pi}/{2^c}\mright)+\cos\mleft({N\pi}/{2^c}\mright)-\sin\mleft({3N\pi}/{2^c}\mright)+\cos\mleft({3N\pi}/{2^c}\mright))=2^{1/2}(-1)^{\mathbf{1}_{c=v}}$ since $\sin(N\pi/2^c)=\sin(3N\pi/2^c)=0$ and $\cos(N\pi/2^c)=\cos(3N\pi/2^c)=(-1)^{\mathbf{1}_{c=v}}$. If $c=v+1$, then $C_1(c,N)$ equals
    \begin{align*}
        \frac{1}{\sqrt{2}}\left(\sin\mleft(\frac{N_2\pi}{2}\mright)+\cos\mleft(\frac{N_2\pi}{2}\mright)+(-1)^{c}\left(\sin\mleft(\frac{3N_2\pi}{2}\mright)-\cos\mleft(\frac{3N_2\pi}{2}\mright)\right)\right)=\sqrt{2}(-1)^{\frac{N_2-1}{2}}\mathbf{1}_{c\text{~odd}}
    \end{align*}
    since $\sin(N_2\pi/2)=(-1)^{(N_2-1)/2}$ and $\sin(3N_2\pi/2)=(-1)^{(N_2+1)/2}$ and $\cos(N_2\pi/2)=\cos(3N_2\pi/2)=0$. Lastly, if $c=v+2$, then
    \begin{align*}
        C_1(c,N)=\frac{1+(-1)^c}{2}\sqrt{2}\left(\sin\mleft(\frac{N_2\pi}{4}\mright)+\cos\mleft(\frac{N_2\pi}{4}\mright)\right)
    \end{align*}
    since $\sin(N_2\pi/4)=\sin(3N_2\pi/4)$ and $\cos(N_2\pi/4)=-\cos(3N_2\pi/4)$.

    Finally, suppose that $n=3$, wherein we proceed similarly to the $n=1$ case. We then have
    \begin{align*}
        \cos\mleft(\frac{n\pi}{4}-\frac{N\pi}{2^c}\mright)=\frac{1}{\sqrt{2}} \left(\sin\mleft(\frac{N\pi}{2^c}\mright)-\cos\mleft(\frac{N\pi}{2^c}\mright)\right)
    \end{align*}
    and
    \begin{align*}
        \cos\mleft(\frac{3\pi n}{4}-\frac{3\pi N}{2^c}\mright)=\frac{1}{\sqrt{2}} \left(\sin\mleft(\frac{3N\pi}{2^c}\mright)+\cos\mleft(\frac{3N\pi}{2^c}\mright)\right)
    \end{align*}
    so that
    \begin{align*}
        C_3(c,N)=\frac{1}{\sqrt{2}}\left(\sin\mleft(\frac{N\pi}{2^c}\mright)-\cos\mleft(\frac{N\pi}{2^c}\mright)+(-1)^{c}\left(\sin\mleft(\frac{3N\pi}{2^c}\mright)+\cos\mleft(\frac{3N\pi}{2^c}\mright)\right)\right).
    \end{align*}
    So if $c\leq v$ and $c$ is even, then $C_3(c,N)=2^{-1/2}(\sin(N\pi/2^c)-\cos(N\pi/2^c)+\sin(3N\pi/2^c)+\cos(3N\pi/2^c))=0$ since $\sin(N\pi/2^c)=\sin(3N\pi/2^c)=0$ and $\cos(N\pi/2^c)=\cos(3N\pi/2^c)$. If $c\leq v$ and $c$ is odd, then $C_3(c,N)=2^{-1/2}(\sin(N\pi/2^c)-\cos(N\pi/2^c)-\sin(3N\pi/2^c)-\cos(3N\pi/2^c))=-2^{1/2}(-1)^{\mathbf{1}_{c=v}}$ since $\sin(N\pi/2^c)=\sin(3N\pi/2^c)=0$ and $\cos(N\pi/2^c)=\cos(3N\pi/2^c)=(-1)^{\mathbf{1}_{c=v}}$. If $c=v+1$, then $C_3(c,N)$ equals
    \begin{align*}
        \frac{1}{\sqrt{2}}\left(\sin\mleft(\frac{N_2\pi}{2}\mright)-\cos\mleft(\frac{N_2\pi}{2}\mright)+(-1)^c\left(\sin\mleft(\frac{3N_2\pi}{2}\mright)+\cos\mleft(\frac{3N_2\pi}{2}\mright)\right)\right)=\sqrt{2}(-1)^{\frac{N_2-1}{2}}\mathbf{1}_{c\text{~odd}}
    \end{align*}
    since $\sin(N_2\pi/2)=(-1)^{(N_2-1)/2}$ and $\sin(3N_2\pi/2)=(-1)^{(N_2+1)/2}$ and $\cos(N_2\pi/2)=\cos(3N_2\pi/2)=0$. Lastly, if $c=v+2$, then
    \begin{align*}
        C_3(c,N)=\frac{1+(-1)^c}{2}\sqrt{2}\left(\sin\mleft(\frac{N_2\pi}{4}\mright)-\cos\mleft(\frac{N_2\pi}{4}\mright)\right)
    \end{align*}
    since $\sin(N_2\pi/4)=\sin(3N_2\pi/4)$ and $\cos(N_2\pi/4)=-\cos(3N_2\pi/4)$.
\end{proof}

With Lemma \ref{lemma_cosine_eval} proven, we are now ready to evaluate $\delta_2^{\square}(N,n)$. We have chosen to combine the cases $n\equiv 1\pmod{4}$ and $n\equiv3\pmod{4}$ since the values of $C_1(c,N)$ and $C_3(c,N)$ are sufficiently similar, whereas we have kept the cases $n\equiv 0\pmod{4}$ and $n\equiv 2\pmod{4}$ separate.

\begin{prop}\label{2_adic_square}
    Let $\theta=1-n/2$ and $v=v_2(N)$ and $N_2=2^{-v}N$ be the odd part of $N$. We have that $\delta_{2}^{\square}(N,n)=1+(-1)^{\lfloor n/4\rfloor}f_2(N,n)$, where,
        \begin{enumerate}[(i)]
            \item if $n\equiv0\pmod4$,
            \begin{align*}
                f_2(N,n)=\left(\frac{2^{\theta}-2^{\theta v}}{1-2^{\theta}}-2^{\theta v}\right)\mathbf{1}_{v>0},
            \end{align*}
            \item if $n\equiv2\pmod 4$,
            \begin{align*}
                f_2(N,n)=2^{\theta(v+1)}(-1)^{(N_2-1)/2},
            \end{align*}
            \item if $n$ is odd and $v$ is even, $f_2(N,n)$ equals
            \begin{align*}
                \frac{1}{2^{1/2}}\left((-1)^{\frac{n-1}{2}}\frac{2^{\theta}-2^{\theta (v+1)}}{1-2^{2\theta}}+2^{\theta(v+1)}(-1)^{\frac{N_2-1}{2}}+2^{\theta(v+2)}\left(\sin\mleft(\frac{N_2\pi}{4}\mright)+(-1)^{\frac{n-1}{2}}\cos\mleft(\frac{N_2\pi}{4}\mright)\right)\right),
            \end{align*}
            \item and if $n$ is odd and $v$ is odd,
            \begin{align*}
                f_2(N,n)=\frac{(-1)^{\frac{n-1}{2}}}{2^{1/2}}\left(\frac{2^{\theta}-2^{\theta v}}{1-2^{2\theta}}-2^{\theta v}\right).
            \end{align*}
        \end{enumerate}
\end{prop}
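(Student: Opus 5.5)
The plan mirrors the proof of Proposition \ref{odd_adic_square}. We start from the exponential-sum formula
\begin{align*}
    \delta_2^{\square}(N,n)=\lim_{d\to\infty}2^{-nd}\sum_{a\bmod 2^d}\left(\sum_{r\bmod 2^d}e\mleft(\frac{ar^2}{2^d}\mright)\right)^n e\mleft(\frac{-aN}{2^d}\mright)
\end{align*}
and split $a\neq0$ according to $v_2(a)$. Writing $a=2^{d-c}a'$ with $a'$ odd modulo $2^c$ and $1\leq c\leq d$, the substitution used in \eqref{quad_gauss_sum_eval_odd} gives $\sum_{r\bmod 2^d}e(ar^2/2^d)=2^{d-c}G(a',2^c)$. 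Here $G(a',2^c)=\sum_{r\bmod 2^c}e(a'r^2/2^c)$, which by \eqref{gauss_formula_berndt} vanishes for $c=1$. For $c\geq2$ it equals $(1+i)\varepsilon_{a'}^{-1}2^{c/2}\left(\frac{2^c}{a'}\right)$.

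Hence the contribution of level $c$ is $2^{-nc}\sum_{a'\bmod 2^c,\,a'\text{ odd}}G(a',2^c)^n e(-a'N/2^c)$, and the $d$-dependence disappears. The key step is to group the odd residues $a'$ modulo $2^c$ by their class modulo $8$. We have $(1+i)^n=2^{n/2}e(n/8)$, and $\varepsilon_{a'}^{-n}$ and $\left(\frac{2}{a'}\right)^{nc}$ depend only on $a'\bmod 8$. Summing $e(-a'N/2^c)$ over $a'\equiv a_0\pmod 8$ with $a'$ modulo $2^c$ (for $c\geq3$) gives $2^{c-3}e(-a_0N/2^c)\mathbf{1}_{2^{c-3}\mid N}$. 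So only $c\leq v+3$ survives, and the remaining sum over $a_0\in\{1,3,5,7\}$ is a combination of $e(n/8-a_0N/2^c)$ with signs $(-1)^{nc}$ for $a_0\equiv\pm3$. Pairing $a_0$ with $-a_0$ yields real parts, namely $2\cos(\pi n/4-\pi N/2^c)$ and $2(-1)^{nc}\cos(3\pi n/4-3\pi N/2^c)$. This produces exactly $C_n(c+\text{shift},N)$ from Lemma \ref{lemma_cosine_eval}, up to an index shift by $1$ coming from the $2^{c-3}$ factor and the relabeling. That shift explains why the lemma only needs $c\leq v+2$. The case $c=2$, where $a'$ runs only over $\{1,3\}$, should be checked separately and folded in. After this step the level-$c$ term is a power $2^{\theta c}$ (with $\theta=1-n/2$, from $2^{-nc}\cdot2^{nc/2}\cdot2^{c}$) times a constant times $C_n(c,N)$.

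Next we use $C_{n+4}=-C_n$, which produces the global factor $(-1)^{\lfloor n/4\rfloor}$, and reduce to $n\bmod 4$. We then sum the twisted geometric series $\sum_c 2^{\theta c}C_n(c,N)$ using the table in Lemma \ref{lemma_cosine_eval}. For $n\equiv0$ the terms $c<v$ give $2\sum_{c=1}^{v-1}2^{\theta c}$ and the term $c=v$ gives $-2\cdot2^{\theta v}$, which yields (i). For $n\equiv2$ only $c=v+1$ survives, which yields (ii). For odd $n$ only odd $c<v$ contribute to the geometric part, giving a ratio $2^{2\theta}$, and the boundary terms $c=v,v+1,v+2$ are present or absent according to the parity of $v$. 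This accounts for the split into (iii) and (iv), and the sign $(-1)^{(n-1)/2}$ distinguishes $C_1$ from $C_3$.

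The main obstacle is the bookkeeping in the second step: getting the normalizing constants and the index shift exactly right, so that the level sums land precisely on $2^{\theta c}C_n(c,N)$ with the claimed prefactors ($1$, or $2^{-1/2}$ for odd $n$). The low levels $c=1,2,3$, where the mod-$8$ grouping degenerates, need the most care. As a safeguard I would sanity-check the final formulas numerically against the case $n=4$, where $\delta_2^{\square}$ is known from Jacobi's formula, for instance $\delta_2^{\square}(N,4)=1$ for odd $N$.
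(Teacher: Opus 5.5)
Your proposal is correct and is essentially the paper's proof. The paper also partitions $a$ by its $2$-adic valuation, evaluates the Gauss sums via \eqref{gauss_formula_berndt}, and groups the odd part of $a$ modulo $8$ (as $t \bmod 4$ with $a_2=2t+1$). After reindexing, it pairs the classes into the cosines $C_n(c,N)$, so that $\delta_2^{\square}(N,n)=1+\tfrac12\sum_{c=1}^{v+2}2^{c\theta}C_n(c,N)$, and then finishes with Lemma \ref{lemma_cosine_eval}. You were right to flag the degenerate level (your $c=2$, i.e.\ $v_2(a)=d-2$), which the paper passes over; a direct check gives $2^{\theta}\cos(\pi n/4-\pi N/2)=\tfrac12\,2^{\theta}C_n(1,N)$, so it folds in exactly as you expect.
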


\begin{proof}
    Let $a\neq0$ be modulo $2^d$ and $a_2=a2^{-v_2(a)}$ be its odd part. Then, unless $v_2(a)=d-1$, 
    \begin{align}\label{quad_gauss_sum_eval_2}
        \sum_{r\bmod 2^d}e\mleft(\frac{ar^2}{2^d}\mright)=2^{v_2(a)}\sum_{t\bmod 2^{d-v_2(a)}}e\mleft(\frac{a_2 t^2}{2^{d-v_2(a)}}\mright)=2^{v_2(a)}(1+i)\varepsilon_{a_2}^{-1}\left(\frac{2^{d-v_2(a)}}{a_2}\right)2^{(d-v_2(a))/2},
    \end{align}
    where the first equality follows from the substitution $r\mapsto t+b2^{d-v_p(a)}$ with $b,t$ modulo $2^{v_p(a)}$ and the second equality follows from Gauss' formula \eqref{gauss_formula_berndt}. Of course, if $v_2(a)=d-1$, then the sum in \eqref{quad_gauss_sum_eval_2} evaluates to zero. For a function $f$, we note that we can partition the sum $\sum_{a\bmod 2^d}f(a)$ via the $2$-adic valuation of the variable $a$ by
    \begin{align}\label{partition_2_adic}
        \sum_{a\bmod 2^d}f(a)=f(0)+\sum_{c=1}^{d}\sum_{t\bmod p^{d-c}} f(2^{c-1}+t2^c).
    \end{align}
    Using this formula on the exponential sum formula for $\delta_2^{\square}(N,n)$, we have that $\delta_2^{\square}(N,n)$ is the limit as $d\to\infty$ of
    \begin{align*}
        1+2^{-nd}\sum_{c=1}^{d-1}\sum_{t\bmod 2^{d-c}}\left(\sum_{r\bmod 2^d}e\mleft(\frac{(2^{c-1}+t2^c)r^2}{2^d}\mright)\right)^n e\mleft(-\frac{(2^{c-1}+t2^c)N}{2^d}\mright)
    \end{align*}
    since the $c=d$ term is zero. Letting $1\leq c\leq d-1$, by \eqref{quad_gauss_sum_eval_2},
    \begin{align*}
        \sum_{r\bmod 2^d}e\mleft(\frac{(2^{c-1}+t2^c)r^2}{2^d}\mright)=2^{(d+c-1)/2}(1+i)\varepsilon^{-1}_{2t+1}\left(\frac{2^{d-c+1}}{2t+1}\right).
    \end{align*}
    So $\delta_2^{\square}(N,n)$ is the limit as $d\to\infty$ of
    \begin{align*}
        1+2^{-nd}\sum_{c=1}^{d-1}\sum_{t\bmod 2^{d-c}} \left(2^{(d+c-1)/2}(1+i)\varepsilon_{2t+1}^{-1}\left(\frac{2^{d-c+1}}{2t+1}\right)\right)^n e\mleft(-\frac{(2^{c-1}+t2^c)N}{2^d}\mright),
    \end{align*}
    which, after rearranging, equals
    \begin{align}\label{before_cos_equating}
        1+2^{-nd}\sum_{c=1}^{d-1}2^{n(d+c-1)/2} (1+i)^n e\mleft(\frac{-N}{2^{d-c+1}}\mright)\sum_{t\bmod 2^{d-c}} \left(\varepsilon_{2t+1}^{-1} \left(\frac{2^{d-c+1}}{2t+1}\right)\right)^n e\mleft(\frac{-tN}{2^{d-c}}\mright).
    \end{align}
    Before we can evaluate further, we must first consider the casework of the factor $\varepsilon^{-1}_{2t+1}\left(\frac{2^{d-c+1}}{2t+1}\right)$. For the Jacobi symbol, we have
    \begin{align*}
        \left(\frac{2^{d-c+1}}{2t+1}\right)=\left(\frac{2}{2t+1}\right)^{d-c+1}=\begin{cases}
            (-1)^{d-c+1} & \text{if~}t\equiv1,2~(\text{mod~} 4),\\
            1 & \text{if~}t\equiv0,3~(\text{mod~} 4).
        \end{cases}
    \end{align*}
    Thus, since
    \begin{align*}
        \varepsilon_{2t+1}^{-1}=\begin{cases}
            1 & \text{if~}t\equiv0,2~(\text{mod~} 4),\\
            -i & \text{if~}t\equiv1,3~(\text{mod~} 4),\\
        \end{cases}
    \end{align*}
    we have the evaluation
    \begin{align*}
        \varepsilon_{2t+1}^{-1}\left(\frac{2^{d-c+1}}{2t+1}\right)=\begin{cases}
            1 & \text{if~}t\equiv0~(\text{mod~} 4),\\
            i(-1)^{d-c} & \text{if~}t\equiv1~(\text{mod~} 4),\\
            (-1)^{d-c+1} & \text{if~}t\equiv2~(\text{mod~} 4),\\
            -i & \text{if~}t\equiv3~(\text{mod~} 4).
        \end{cases}
    \end{align*}
    So splitting $\sum_{t\bmod 2^{d-c}} \left(\varepsilon_{2t+1}^{-1} \left(\frac{2^{d-c+1}}{2t+1}\right)\right)^n e\mleft({-tN}/{2^{d-c}}\mright)$ into the congruence classes of $t$ modulo $4$ yields
    \begin{align*}
        &\sum_{t\bmod 2^{d-c-2}} e\mleft(\frac{-(4t)N}{2^{d-c}}\mright)+(i(-1)^{d-c})^n \sum_{t\bmod 2^{d-c-2}} e\mleft(\frac{-(4t+1)N}{2^{d-c}}\mright)\nonumber\\
        &+(-1)^{n(d-c+1)} \sum_{t\bmod 2^{d-c-2}} e\mleft(\frac{-(4t+2)N}{2^{d-c}}\mright)+(-i)^n \sum_{t\bmod 2^{d-c-2}} e\mleft(\frac{-(4t+3)N}{2^{d-c}}\mright)\nonumber\\
        &=\left(1+(i(-1)^{d-c})^n e\mleft(\frac{-N}{2^{d-c}}\mright)+(-1)^{n(d-c+1)} e\mleft(\frac{-2N}{2^{d-c}}\mright)+(-i)^n e\mleft(\frac{-3N}{2^{d-c}}\mright)\right)\sum_{t\bmod 2^{d-c-2}} e\mleft(\frac{-tN}{2^{d-c-2}}\mright)\nonumber\\
        &=\left(1+(i(-1)^{d-c})^n e\mleft(\frac{-N}{2^{d-c}}\mright)+(-1)^{n(d-c+1)} e\mleft(\frac{-2N}{2^{d-c}}\mright)+(-i)^n e\mleft(\frac{-3N}{2^{d-c}}\mright)\right)2^{d-c-2}\mathbf{1}_{v_2(N)\geq d-c-2}.
    \end{align*}
    Because of the factor $\mathbf{1}_{v_2(N)\geq d-c-2}$, we henceforth assume that $c$ is such that $v_2(N)\geq d-c-2$. Appending the factor of $2^{-n/2}(1+i)^n e(-N/2^{d-c+1})=(1/\sqrt{2}+i/\sqrt{2})^ne(-N/2^{d-c+1})$ coming from \eqref{before_cos_equating}, we now consider
    \begin{align}\label{cosine_func_1}
        \left(\frac{1}{\sqrt{2}}+\frac{i}{\sqrt{2}}\right)^n e\mleft(\frac{-N}{2^{d-c+1}}\mright)\left(1+(-i)^n e\mleft(\frac{-3N}{2^{d-c}}\mright)\right)=e\mleft(\frac{n}{8}-\frac{N}{2^{d-c+1}}\mright)+e\mleft(\frac{7n}{8}-\frac{7N}{2^{d-c+1}}\mright).
    \end{align}
    Since $v_2(N)\geq d-c-2$, we have that $8N/2^{d-c+1}\in\ZZ$. So $e(7n/8-7N/2^{d-c+1})=e(-n/8+N/2^{d-c+1})$ and thus \eqref{cosine_func_1} equals $2\cos(\pi n/4-\pi N/2^{d-c})$. Similarly, consider
    \begin{align}\label{cosine_func_2}
        \left(\frac{1}{\sqrt{2}}+\frac{i}{\sqrt{2}}\right)^n &e\mleft(\frac{-N}{2^{d-c+1}}\mright)\left((i(-1)^{d-c})^n e\mleft(\frac{-N}{2^{d-c}}\mright)+(-1)^{n(d-c+1)} e\mleft(\frac{-2N}{2^{d-c}}\mright)\right)\nonumber\\
        &=(-1)^{n(d-c)}\left(e\mleft(\frac{3n}{8}-\frac{3N}{2^{d-c+1}}\mright)+e\mleft(\frac{5n}{8}-\frac{5N}{2^{d-c+1}}\mright)\right).
    \end{align}
    Since $8N/2^{d-c+1}\in\ZZ$, we have $e({5n}/{8}-{5N}/{2^{d-c+1}})=e(-3n/8+3N/2^{d-c+1})$ and thus \eqref{cosine_func_2} equals $(-1)^{n(d-c)}2\cos(3\pi n/4-3\pi N/2^{d-c})$. Therefore, together with \eqref{before_cos_equating}, $\delta_2^{\square}(N,n)$ is the limit as $d$ tends to infinity of
    \begin{align*}
        1+2^{-nd}\sum_{c=d-v_2(N)-2}^{d-1}&2^{d-c-1+n(d+c)/2}\left(\cos\mleft(\frac{\pi n}{4}-\frac{\pi N}{2^{d-c}}\mright)+(-1)^{n(d-c)}\cos\mleft(\frac{3\pi n}{4}-\frac{3\pi N}{2^{d-c}}\mright)\right)\\
        =1+\frac{1}{2}\sum_{c=1}^{v_2(N)+2}&2^{c(1-n/2)}\left(\cos\mleft(\frac{\pi n}{4}-\frac{\pi N}{2^c}\mright)+(-1)^{nc}\cos\mleft(\frac{3\pi n}{4}-\frac{3\pi N}{2^c}\mright)\right).
    \end{align*}
    Define $C_n(c,N)=\cos\mleft({\pi n}/{4}-{\pi N}/{2^c}\mright)+(-1)^{nc}\cos\mleft({3\pi n}/{4}-{3\pi N}/{2^c}\mright)$. We proceed by invoking Lemma \ref{lemma_cosine_eval}, leveraging the relation $C_{n}(c,N)=(-1)^{\lfloor n/4\rfloor}C_{n\bmod 4}(c,N)$. We note that we will consider the $v_2(N)=0$ case separately for $n\equiv0\pmod{4}$ since we assume that $v_2(N)\in\{1,\dots,v_2(N)+2\}$. However, we do not need to consider the $v_2(N)=0$ case separately for other congruence classes of $n$ modulo $4$ since $C_n(v_2(N),N)=0$ if $v_2(N)$ is even and $n\not\equiv0\pmod{4}$. Let $\theta=1-n/2$.
    
    Let $n\equiv0\pmod{4}$. For $v_2(N)>0$,
    \begin{align*}
        \delta_2^{\square}(N,n)&=1+(-1)^{\lfloor n/4\rfloor}\frac{1}{2}\left(\sum_{c=1}^{v_2(N)-1}2^{c\theta}2-2\cdot 2^{v_2(N)\theta}\right)\\
        &=1+(-1)^{\lfloor n/4\rfloor}\left(\frac{2^{\theta}-2^{v_2(N)\theta}}{1-2^{\theta}}-2^{v_2(N)\theta}\right).
    \end{align*}
    If $v_2(N)=0$, then $C_0(1,N)=C_0(2,N)=0$, wherein we have $\delta_2^{\square}(N,n)=1$. 
    
    Let $n\equiv2\pmod{4}$. Then
    \begin{align*}
        \delta_2^{\square}(N,n)=1+(-1)^{\lfloor n/4\rfloor}\frac{1}{2}\left(2^{\theta(v_2(N)+1)}2(-1)^{(N_2-1)/2}\right)=1+(-1)^{\lfloor n/4\rfloor}\left(2^{\theta(v_2(N)+1)}(-1)^{(N_2-1)/2}\right).
    \end{align*}
    
    Finally, let $n$ be odd. Then if $v_2(N)$ is even,
    \begin{align*}
        \delta_2^{\square}(N,n)=1+(-1)^{\lfloor n/4\rfloor}\frac{1}{2}\Bigg(&\sum_{\substack{1\leq c\leq v_2(N)-1\\ c\text{~odd}}}2^{c\theta}2^{1/2}(-1)^{(n-1)/2}+2^{(v_2(N)+1)\theta}2^{1/2}(-1)^{(N_2-1)/2}\\
        &+2^{(v_2(N)+2)\theta}2^{1/2}\left(\sin\mleft(\frac{N_2\pi}{4}\mright)+(-1)^{(n-1)/2}\cos\mleft(\frac{N_2\pi}{4}\mright)\right)\Bigg)\\
        =1+(-1)^{\lfloor n/4\rfloor}\frac{1}{2^{1/2}}&\Bigg((-1)^{(n-1)/2}\frac{2^{\theta}-2^{\theta (v_2(N)+1)}}{1-2^{2\theta}}+2^{\theta(v_2(N)+1)}(-1)^{(N_2-1)/2}\\
        &+2^{(v_2(N)+2)\theta}\left(\sin\mleft(\frac{N_2\pi}{4}\mright)+(-1)^{(n-1)/2}\cos\mleft(\frac{N_2\pi}{4}\mright)\right)\Bigg),
    \end{align*}
    and if $v_2(N)$ is odd,
    \begin{align*}
        \delta_2^{\square}(N,n)&=1+(-1)^{\lfloor n/4\rfloor}\frac{1}{2}\left(\sum_{\substack{1\leq c\leq v_2(N)-2\\ c\text{~odd}}}2^{c\theta}2^{1/2}(-1)^{(n-1)/2}-2^{v_2(N)\theta}2^{1/2}(-1)^{(n-1)/2}\right)\\
        &=1+(-1)^{\lfloor n/4\rfloor}\frac{(-1)^{(n-1)/2}}{2^{1/2}} \left(\frac{2^{\theta}-2^{\theta v_2(N)}}{1-2^{2\theta}}-2^{\theta v_2(N)}\right).
    \end{align*}
\end{proof}

\subsection{The polygonal $p$-adic densities}

We now move to computing the polygonal $p$-adic densities. Let $G(A,B,C)\defeq\sum_{r\bmod C}e((Ar^2+Br)/C)$ be the generalized quadratic Gauss sum. Recall that $G(A,B,C)=0$ if $(A,C)>1$, unless $(A,C)\mid B$, wherein we have
\begin{align}\label{generalized_quad_gauss_sum}
    G(A,B,C)=(A,C)\cdot G(A/(A,C),B/(A,C),C/(A,C)).
\end{align}
This is, in fact, the only result that we will need to evaluate $\delta_p^{\ksymbol{k}}(N,n)$ when $p$ is an odd prime divisor of $k-2$. 

\begin{prop}\label{odd_adic_polygonal}
    Let $p$ be an odd prime divisor of $k-2$. Then $\delta_{p}^{\ksymbol{k}}(N,n)=p^{v_p(k-2)}$ for any $n$ and $N$.
\end{prop}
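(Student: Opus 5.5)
The plan is to evaluate the exponential-sum expression for $\delta_p^{\ksymbol{k}}(N,n)$ given just before Subsection \ref{subsection_square_densities}, namely
\begin{align*}
    \delta_p^{\ksymbol{k}}(N,n)=\lim_{d\to\infty}p^{-nd}\sum_{a\bmod p^d}\left(\sum_{r\bmod p^d}e\mleft(\frac{a(rp^{v}+k)^2}{p^d}\mright)\right)^n e\mleft(\frac{-aX_N}{p^d}\mright).
\end{align*}
Here $v=v_p(2(k-2))=v_p(k-2)$, since $p$ is odd. The key point is that the inner sum vanishes unless $a$ is divisible by a high power of $p$. Expanding the square, the inner sum equals
\begin{align*}
    e\mleft(\frac{ak^2}{p^d}\mright)G(ap^{2v},2akp^{v},p^d).
\end{align*}
Because $p\mid k-2$ and $p$ is odd, we have $k\equiv2\pmod p$, so $p\nmid 2k$.

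Write $s=v_p(a)$, with $s=d$ when $a=0$. Then $(ap^{2v},p^d)=p^{\min(s+2v,d)}$, while $v_p(2akp^v)=s+v$ when $s<d$. By \eqref{generalized_quad_gauss_sum}, $G$ vanishes unless $p^{\min(s+2v,d)}\mid 2akp^v$. Since $v\geq1$, this forces $\min(s+2v,d)\leq s+v$, that is, $s\geq d-v$. Thus only $a=p^{d-v}b$ with $b\bmod p^v$ contribute, and the case $a=0$ is included as $b=0$.

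For such $a$ I would not use the Gauss sum formula at all and instead compute directly:
\begin{align*}
    \frac{a(rp^v+k)^2}{p^d}=\frac{b(rp^v+k)^2}{p^v}\equiv\frac{bk^2}{p^v}\pmod 1.
\end{align*}
Hence the inner sum is exactly $p^d e(bk^2/p^v)$. Substituting gives
\begin{align*}
    \delta_p^{\ksymbol{k}}(N,n)=\lim_{d\to\infty}\sum_{b\bmod p^v}e\mleft(\frac{b(nk^2-X_N)}{p^v}\mright).
\end{align*}
This expression is independent of $d$.

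The last step is the identity $X_N-nk^2=8(k-2)N+n\left((k-4)^2-k^2\right)=8(k-2)(N-n)$, which is divisible by $p^{v}$. Every term of the $b$-sum is therefore $1$, and the sum equals $p^{v}=p^{v_p(k-2)}$, independently of $N$ and $n$, as claimed.

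The only delicate step is the vanishing argument. One has to check the divisibility condition in \eqref{generalized_quad_gauss_sum} carefully at the boundary cases $s+2v\geq d$ and $a=0$, and confirm that $v\geq1$ is genuinely used there (it is what excludes $s<d-v$). Everything after that is elementary.
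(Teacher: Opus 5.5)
Your proposal is correct and follows essentially the same route as the paper: you write the inner sum as $e(ak^2/p^d)G(ap^{2v},2akp^v,p^d)$, use the divisibility criterion (with $v\geq1$) to restrict to $p^{d-v}\mid a$, where the inner sum is $p^d e(ak^2/p^d)$, and then collapse the remaining sum using $X_N-nk^2=8(k-2)(N-n)$. Your explicit observation that $p\nmid 2k$ (since $k\equiv2\pmod p$) justifies $v_p(2akp^v)=v_p(a)+v$, a step the paper uses without comment.
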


\begin{proof}
    Let $v=v_p(k-2)$. We first evaluate the exponential sum $\sum_{r=0}^{p^d-1}e\mleft({a(rp^{v}+k)^2}/{p^d}\mright)$. We have
    \begin{align*}
        \sum_{r\bmod p^d}e\mleft(\frac{a(rp^v+k)^2}{p^d}\mright)=e(ak^2/p^d)G(ap^{2v},2akp^v,p^d),
    \end{align*}
    which is zero unless $p^{\min(v_p(a)+2v,d)}=(ap^{2v},p^d)\mid 2akp^v$. This holds if and only if $v_p(2akp^v)=v_p(a)+v\geq\min(v_p(a)+2v,d)$, which holds only if $v_p(a)\geq d-v$ since $v>0$. If $v_p(a)\geq d-v$, then $ap^{2v}\equiv 2akp^v\equiv 0\pmod{p^d}$ so that $G(ap^{2v},2akp^v,p^d)=p^d$. So
    \begin{align*}
        \sum_{r\bmod p^d}e\mleft(\frac{a(rp^{v}+k)^2}{p^d}\mright)=\begin{cases}
            e({ak^2}/p^d) p^d & \text{if~}p^{d-v}\mid a,\\
            0 & \text{otherwise.}
        \end{cases}
    \end{align*}
    By utilizing this evaluation, we see that
    \begin{align*}
        \delta_p^{\ksymbol{k}}(N,n)&=\lim_{d\to\infty}p^{-nd}\sum_{a\bmod p^d}\left(\sum_{r\bmod p^d}e\mleft(\frac{a(rp^{v}+k)^2}{p^d}\mright)\right)^n e\mleft(\frac{-aX_N}{p^d}\mright)\\
        &=\lim_{d\to\infty}p^{-nd}\sum_{a\bmod p^v}\left(e\mleft(\frac{ap^{p-v}k^2}{p^d}\mright)p^d\right)^n e\mleft(\frac{-ap^{d-v}X_N}{p^d}\mright)\\
        &=\sum_{a\bmod p^v} e\mleft(\frac{ank^2}{p^v}\mright)e\mleft(\frac{-aX_N}{p^v}\mright)\\
        &=\sum_{a\bmod p^v}e\mleft(\frac{-a\left(8(k-2)N+n((k-4)^2-k^2)\right)}{p^v}\mright)\\
        &=\sum_{a\bmod p^v} e\mleft(\frac{-8a(k-2)(N-n)}{p^v}\mright)=\sum_{a\bmod p^v}1=p^v.
    \end{align*}
\end{proof}

The method of evaluating $\delta_2^{\ksymbol{k}}(N,n)$ is only slightly more involved than Proposition \ref{odd_adic_polygonal} when $k\not\equiv0\pmod{4}$, but is as involved as Proposition \ref{2_adic_square} when $k\equiv0\pmod{4}$. In fact, the evaluation of $\delta_2^{\ksymbol{k}}(N,n)$ is quite similar to $\delta_2^{\square}(N,n)$ when $k\equiv0\pmod{4}$. Before moving on to this evaluation, we first note a simple, standard result on $2$-adic generalized quadratic Gauss sums, which we will need to use in conjunction with \eqref{generalized_quad_gauss_sum} when $k$ is odd.

\begin{lemma}\label{odd_odd_GQGS}
    Let $L\in\ZZ^+$ and $A,B$ be positive odd integers. Then $G(A,B,2^L)=\mathbf{1}_{L=1}\cdot2$.
\end{lemma}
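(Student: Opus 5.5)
We need to show that $G(A,B,2^L)=\sum_{r\bmod 2^L}e((Ar^2+Br)/2^L)$ equals $2$ when $L=1$ and vanishes when $L\geq2$, for $A$ and $B$ odd. The key observation is that $Ar^2+Br=r(Ar+B)$, and since $A,B$ are odd, $Ar+B\equiv r+1\pmod 2$. Hence $Ar^2+Br$ is always even.

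\textbf{Case $L=1$.} Every term of the sum over $r\in\{0,1\}$ is $e(\text{even}/2)=1$. So $G(A,B,2)=2$.

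\textbf{Case $L\geq2$.} We pair each $r$ with $r+2^{L-1}$ modulo $2^L$. Write $h=2^{L-1}$. Expanding,
\begin{align*}
A(r+h)^2+B(r+h)=Ar^2+Br+2Arh+Ah^2+Bh.
\end{align*}

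\emph{Subcase $L\geq3$.} Here $2Arh=2^L Ar\equiv 0\pmod{2^L}$ and $Ah^2=A2^{2L-2}\equiv0\pmod{2^L}$, since $2L-2\geq L$. The remaining increment is $Bh=B2^{L-1}$, which is $2^{L-1}$ times an odd number. So the phase is multiplied by $e(B/2)=-1$, and the terms pair off and cancel. Thus $G=0$.

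\emph{Subcase $L=2$.} Here $h=2$, and the increment is $4Ar+4A+2B\equiv 2B\equiv 2\pmod 4$. Again the phase flips sign, giving $G=0$.

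Alternatively, $L=2$ can be checked directly. The values $r=0,1,2,3$ give $0,\;A+B,\;4A+2B,\;9A+3B$ modulo $4$. Since $A+B\equiv 0$ or $2\pmod 4$, the exponentials are $1,\;\pm1,\;-1,\;\mp1$, which sum to $0$.

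There is no real obstacle here. The only care needed is checking that the $Ah^2$ term vanishes modulo $2^L$ exactly when $L\geq2$; this is why the case $L=2$ is handled separately rather than merged with $L\geq3$.
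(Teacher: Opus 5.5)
Your proposal is correct and follows the paper's proof: for $L=1$ both terms equal $1$, and for $L\geq2$ the shift $r\mapsto r+2^{L-1}$ changes $Ar^2+Br$ by $2^{L-1}$ times an odd number modulo $2^L$, which negates each term. Your separate treatment of $L=2$ is unnecessary, since $A\cdot2^{2L-2}\equiv0\pmod{2^L}$ already holds at $L=2$ (because $2L-2=L$ there), and this is how the paper handles all $L\geq2$ at once.
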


\begin{proof}
    First note for $L=1$ that $G(A,B,2^L)=1+e((A+B)/2)=2$. Now suppose that $L\geq2$. Let $g(x)=Ax^2+Bx$. Since $L\geq2$, we have $A\cdot 2^{2L-2}\equiv0\pmod{2^L}$. So since $2Ax+B$ is odd, $g(x+2^{L-1})=Ax^2+Bx+2^{L-1}(2Ax+B)+A\cdot2^{2L-2}\equiv g(x)+2^{L-1}$. So $e(g(x+2^{L-1})/2^L)=-e(g(x)/2^L)$ and hence $G(A,B,2^L)=0$.
\end{proof}

\begin{prop}\label{2_adic_polygonal}
    Let $\theta=1-n/2$ and $(X_N)_2=2^{-v_2(X_N)}X_N$ be the odd part of $X_N$. We have that $\delta_{2}^{\ksymbol{k}}(N,n)=2^{v_2(k-2)+3}+(-1)^{\lfloor n/4\rfloor}4^n T(X_N,n,k)$, where $T(X_N,n,k)=0$ if $k\not\equiv0\pmod{4}$, whereas for $k\equiv0\pmod{4}$,
        \begin{enumerate}[(i)]
            \item if $n\equiv0\pmod4$,
            \begin{align*}
                T(X_N,n,k)=\left(\frac{2^{5\theta}-2^{\theta v_2(X_N)}}{1-2^{\theta}}-2^{\theta v_2(X_N)}\right)\mathbf{1}_{v_2(X_N)>4},
            \end{align*}
            \item if $n\equiv2\pmod 4$,
            \begin{align*}
                T(X_N,n,k)=2^{\theta(v_2(X_N)+1)}(-1)^{((X_N)_2-1)/2},
            \end{align*}
            \item if $n$ is odd and $v_2(X_N)$ is even,
            \begin{align*}
                T(X_N,n,k)=\frac{1}{2^{1/2}}\Bigg((-1)^{(n-1)/2}\frac{2^{5\theta}-2^{\theta (v_2(X_N)+1)}}{1-2^{2\theta}}+2^{\theta(v_2(X_N)+1)}(-1)^{((X_N)_2-1)/2}\\
                +2^{(v_2(X_N)+2)\theta}\left(\sin\mleft(\frac{(X_N)_2\pi}{4}\mright)+(-1)^{(n-1)/2}\cos\mleft(\frac{(X_N)_2\pi}{4}\mright)\right)\Bigg),
            \end{align*}
            \item and if $n$ is odd and $v_2(X_N)$ is odd,
            \begin{align*}
                T(X_N,n,k)=\frac{(-1)^{(n-1)/2}}{2^{1/2}} \left(\frac{2^{5\theta}-2^{\theta v_2(X_N)}}{1-2^{2\theta}}-2^{\theta v_2(X_N)}\right).
            \end{align*}
        \end{enumerate}
\end{prop}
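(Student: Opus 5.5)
Starting from the exponential-sum formula for $\delta_2^{\ksymbol{k}}(N,n)$ with $v=v_2(2(k-2))$, I will evaluate the inner sum $\sum_{r\bmod 2^d}e(a(2^vr+k)^2/2^d)=e(ak^2/2^d)G(a2^{2v},2^{v+1}ak,2^d)$ for each $a\bmod 2^d$. Writing $c=v_2(a)$, \eqref{generalized_quad_gauss_sum} reduces this to $2^{\min(c+2v,d)}$ times a Gauss sum $G(a_2,2^{v+1+c-\min(c+2v,d)}k a_2\cdot 2^{-?},\cdot)$ with odd leading coefficient, or gives zero when the divisibility $(a2^{2v},2^d)\mid 2^{v+1}ak$ fails. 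Since $v\geq1$, that divisibility forces $c\geq d-v-1-v_2(k)$ roughly, so only $a$ with large $2$-adic valuation survive. I will split into the cases $k$ odd, $k\equiv2\pmod4$, and $k\equiv0\pmod4$.

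For $k$ odd, $v=1$. The surviving sums have odd linear coefficient, and Lemma \ref{odd_odd_GQGS} kills all but the modulus-$2$ case, so the sum equals $e(ak^2/2^d)\,2^{d}$ precisely on a small explicit set of $a$, namely $2^{d-3}\mid a$ with the level-$2$ correction. For $k\equiv2\pmod4$, $v\geq2$ and $v_2(2k)=2$, so the same argument as Proposition \ref{odd_adic_polygonal} applies verbatim. In both cases, the sum over the surviving $a$ collapses as in Proposition \ref{odd_adic_polygonal}. It becomes $\sum_{a\bmod 2^{v+2}}e(a(nk^2-X_N)/2^{v+2})$ with $nk^2-X_N=-8(k-2)(N-n)$ divisible by $2^{v+2}$ (here $v+2=v_2(k-2)+3$), and therefore equals $2^{v_2(k-2)+3}$, giving $T=0$. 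I will check carefully which power of $2$ appears in each subcase so that the count is exactly $2^{v_2(k-2)+3}$.

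For $k\equiv0\pmod4$, $v=1$ and $k=4k'$, so $(2r+k)^2=4(r+2k')^2$. After shifting $r$, the inner sum becomes $\sum_{r\bmod 2^d}e(4a r^2/2^d)$, a pure quadratic Gauss sum at level $2^{d-2}$, taken $4$ times. The density is then $4^{n}\cdot 2^{-2}$ times a square density $\delta_2^{\square}$-type limit for the target $X_N/4$, up to the shift in normalization. More precisely, I will redo the proof of Proposition \ref{2_adic_square} with $a$ ranging over valuations $c$ and modulus $2^{d-2}$. This produces $1+\tfrac12\sum_c 2^{c\theta}C_n(c,X_N)$, but with the range of $c$ beginning at $5$ rather than $1$, because the factor $4$ in $a$ and the restriction $x_i\equiv k\pmod 4$ shift indices by the contributions of the low-valuation terms. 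The terms with $c\leq4$ combine with the constant to produce the main term $2^{v_2(k-2)+3}=8$ (since $v_2(k-2)=1$). The tail $c\geq5$, evaluated via Lemma \ref{lemma_cosine_eval} with $N$ replaced by $X_N$, gives exactly the truncated geometric sums $2^{5\theta}-\cdots$ in cases (i)–(iv), together with the overall factor $(-1)^{\lfloor n/4\rfloor}4^n$. Note that $v_2(X_N)\geq4$ automatically here, since $16\mid n(k-4)^2$ and $16\mid 8(k-2)N$ when $k\equiv0\pmod 4$ ($k-2$ even). This explains the indicator $v_2(X_N)>4$ in case (i).

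I expect the main obstacle to be the $k\equiv0\pmod4$ bookkeeping. Two things have to come out exactly: the cutoff where the geometric sums begin at $2^{5\theta}$, and the absorption of the low-$c$ terms into the constant $2^{v_2(k-2)+3}$. I will handle this by explicitly computing the contributions of $c=1,\dots,4$ (equivalently, of $a$ with $2^{d-4}\nmid a$ after rescaling) and checking that they sum to the stated constant. Everything else then transfers directly from the proof of Proposition \ref{2_adic_square}.
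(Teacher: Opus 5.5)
Your treatment of $k$ odd and $k\equiv 2\pmod 4$ follows the paper's argument. The genuine problem is the case $k\equiv0\pmod 4$, the only case with $T\neq0$. There $v_2(k-2)=1$, so in your own convention $v=v_2(2(k-2))=2$, not $1$. You have slipped into the paper's convention $v=v_2(k-2)$; the same slip gives the harmless ``$v\geq2$'' for $k\equiv2\pmod 4$, where in fact $v\geq3$, and $v\geq 3$ is what your divisibility argument uses. Hence the restriction in $\delta_2^{k}$ is $x_i\equiv k\equiv0\pmod 4$. The inner sum is therefore $\sum_{r\bmod 2^d}e(a(4r+k)^2/2^d)=\sum_{r\bmod 2^d}e(16ar^2/2^d)$, not $\sum_{r\bmod 2^d}e(4ar^2/2^d)$. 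With your $(2r+k)$ you would be computing the density with all $x_i$ merely even. That density is $4\,\delta_2^{\square}(X_N/4,n)=4+2^{n-1}\sum_{c=3}^{v_2(X_N)+2}2^{c\theta}C_n(c,X_N)$, with constant $4$, cutoff $c=3$, and prefactor $2^{n-1}$ instead of $4^n/2$. No accounting of low-valuation terms turns this into (i)--(iv). The target you set for that accounting is also wrong: $2^{v_2(k-2)+3}=2^4=16$, not $8$. The cutoff at $5$ and the constant are exactly what you deferred to ``I will check,'' so the proposal does not establish the $k\equiv0\pmod4$ case.

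With the modulus corrected, your reduction to a square density works and is in fact shorter than the paper's route. Writing $x_i=4y_i$ in the counting definition (legitimate since $16\mid X_N$) gives exactly $\delta_2^{k}(N,n)=16\,\delta_2^{\square}(X_N/16,n)$. Now $16=4^n2^{4\theta}$, $v_2(X_N/16)=v_2(X_N)-4$ has the same parity as $v_2(X_N)$, and $X_N/16$ has odd part $(X_N)_2$. So multiplying the four cases of Proposition \ref{2_adic_square} for $X_N/16$ by $16$ yields $16+(-1)^{\lfloor n/4\rfloor}4^nT(X_N,n,k)$ with (i)--(iv) verbatim, including the indicator $\mathbf{1}_{v_2(X_N)>4}$. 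The paper instead redoes the exponential-sum computation. The $a$ with $v_2(a)\geq d-4$ have inner sum $2^d$ and $e(-aX_N/2^d)=1$, contributing $1+1+2+4+8=16$. Those with $v_2(a)=d-5$ contribute $0$. For $v_2(a)\leq d-6$ the inner sum is exactly $4$ times the one in Proposition \ref{2_adic_square}; this produces the factor $4^n$ and leaves the cosine sum over $5\leq c\leq v_2(X_N)+2$.
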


\begin{proof}
    Let $v=v_2(k-2)$. We first evaluate the exponential sum $\sum_{r\bmod 2^d} e\mleft({a(r2^{v+1}+k)^2}/{2^d}\mright)$. We have
    \begin{align*}
        \sum_{r\bmod 2^d}e\mleft(\frac{a(r2^{v+1}+k)^2}{2^d}\mright)=e(ak^2/2^d)G(a2^{2v+2},ak2^{v+2},2^d).
    \end{align*}
    
    Suppose that $k\equiv2\pmod4$. Recall that $G(a2^{2v+2},ak2^{v+2},2^d)=0$ unless $(a2^{2v+2},2^d)\mid ak2^{v+2}$. This holds if and only if $v_2(a)+v+3=v_2(a)+v_2(k)+v+2=v_2(ak2^{v+2})\geq\min(v_2(a)+2v+2,d)$, which holds only if $v_2(a)\geq d-v-3$ since $v\geq2$. For $v_2(a)\geq d-v-3$, we have $G(a2^{2v+2},ak2^{v+2},2^d)=2^d$. So
    \begin{align*}
        \sum_{r\bmod 2^d}e\mleft(\frac{a(r2^{v+1}+k)^2}{2^d}\mright)=\begin{cases}
            e(ak^2/2^d)2^d & \text{if~}2^{d-v-3}\mid a,\\
            0 & \text{otherwise.}
        \end{cases}
    \end{align*}
    By utilizing this evaluation, we see that
    \begin{align}\label{final_delta_2_kgonal_calc}
        \delta_2^{\ksymbol{k}}(N,n)&=\lim_{d\to\infty}2^{-nd}\sum_{a\bmod 2^d}\left(\sum_{r\bmod 2^d}e\mleft(\frac{a(r2^{v+1}+k)^2}{2^d}\mright)\right)^n e\mleft(\frac{-aX_N}{2^d}\mright)\nonumber\\
        &=\lim_{d\to\infty}2^{-nd}\sum_{a\bmod 2^{v+3}}\left(e\mleft(\frac{a2^{d-v-3}k^2}{2^d}\mright)2^d\right)^n e\mleft(\frac{-a2^{d-v-3}X_N}{2^d}\mright)\nonumber\\
        &=\sum_{a\bmod 2^{v+3}}e\mleft(\frac{ank^2}{2^{v+3}}\mright)e\mleft(\frac{-aX_N}{2^{v+3}}\mright)\nonumber\\
        &=\sum_{a\bmod 2^{v+3}}e\mleft(\frac{-a\left(8(k-2)N+n((k-4)^2-k^2)\right)}{2^{v+3}}\mright)\nonumber\\
        &=\sum_{a\bmod 2^{v+3}}e\mleft(\frac{-8a(k-2)(N-n)}{2^{v+3}}\mright)=\sum_{a\bmod 2^{v+3}}1=2^{v+3}.
    \end{align}
    
    Now suppose that $k$ is odd. Letting $v_2(a)<d-2$ and $a_2=a2^{-v_2(a)}$ be the odd part of $a$, by \eqref{generalized_quad_gauss_sum}, $G(4a,4ak,2^d)=2^{v_2(a)+2}G(a_2,a_2k,2^{d-v_2(a)-2})$. By Lemma \ref{odd_odd_GQGS}, $G(a_2,a_2k,2^{d-v_2(a)-2})=0$ unless $d-v_2(a)-2=1$, wherein $G(a_2,a_2k,2^{d-v_2(a)-2})=2$. So if $v_2(a)<d-2$, then $G(4a,4ak,2^d)=\mathbf{1}_{v_2(a)=d-3}\cdot 2^d$. Now letting $v_2(a)\geq d-2$, we then have $G(4a,4ak,2^d)=2^d$. So
    \begin{align*}
        \sum_{r\bmod 2^d}e\mleft(\frac{a(2r+k)^2}{2^d}\mright)=\begin{cases}
            e(ak^2/2^d)2^d & \text{if~}2^{d-3}\mid a,\\
            0 & \text{otherwise.}
        \end{cases}
    \end{align*}
    Thus, we can simply take the $v=0$ case of \eqref{final_delta_2_kgonal_calc}, yielding $\delta_2^{\ksymbol{k}}(N,n)=2^3$.

Finally, suppose that $k\equiv0\pmod{4}$. Letting $v_2(a)\leq d-4$, by \eqref{generalized_quad_gauss_sum}, $G(16a,8ak,2^d)=2^{v_2(a)+4}G(a_2,a_2k/2,2^{d-v_2(a)-4})$. Completing the square by $a_2r^2+(a_2k/2)r=a_2(r+k/4)^2-a_2k^2/16$, we have
\begin{align*}
    G(a_2,a_2k/2,2^{d-v_2(a)-4})&=e\mleft(\frac{-a_2k^2/16}{2^{d-v_2(a)-4}}\mright)\sum_{r\bmod 2^{d-v_2(a)-4}}e\mleft(\frac{a_2(r+k/4)^2}{2^{d-v_2(a)-4}}\mright)\\
    &=e\mleft(\frac{-ak^2}{2^{d}}\mright)\sum_{r\bmod 2^{d-v_2(a)-4}}e\mleft(\frac{a_2r^2}{2^{d-v_2(a)-4}}\mright).
\end{align*}
For $v_2(a)\geq d-3$, we have $G(16a,8ak,2^d)=2^d$. So if $v_2(a)\leq d-4$, then
\begin{align}\label{k_0_4_GQGS_eval}
        \sum_{r\bmod 2^d}e\mleft(\frac{a(4r+k)^2}{2^d}\mright)=2^{v_2(a)+4}\sum_{r\bmod 2^{d-v_2(a)-4}}e\mleft(\frac{a_2r^2}{2^{d-v_2(a)-4}}\mright),
\end{align}
and if $v_2(a)\geq d-3$, then $\sum_{r\bmod 2^d}e\mleft({a(4r+k)^2}/{2^d}\mright)=e\mleft({ak^2}/{2^d}\mright)2^d=2^d$. Note that when $v_2(a)=d-4$, \eqref{k_0_4_GQGS_eval} equals $2^d$ and when $v_2(a)=d-5$, \eqref{k_0_4_GQGS_eval} is zero. Using \eqref{partition_2_adic} on the exponential sum formula for $\delta_2^{\ksymbol{k}}(N,n)$, we have that $\delta_2^{\ksymbol{k}}(N,n)$ is the limit as $d\to\infty$ of
\begin{align}\label{partition_k_0_4}
    1+2^{-nd}\sum_{c=1}^{d}\sum_{t\bmod 2^{d-c}}\left(\sum_{r\bmod 2^d}e\mleft(\frac{(2^{c-1}+t2^c)(4r+k)^2}{2^d}\mright)\right)^n e\mleft(-\frac{(2^{c-1}+t2^c)X_N}{2^d}\mright).
\end{align}
Note that $v_2(X_N)\geq4$. Thus, by our evaluations, \eqref{partition_k_0_4} equals
\begin{align}\label{partition_k_0_4_eval}
    &1+2^{-nd}\sum_{c=d-3}^{d}\sum_{t\bmod 2^{d-c}}2^{nd}e\mleft(-\frac{(2^{c-1}+t2^c)X_N}{2^d}\mright)\nonumber\\
    &+2^{-nd}\sum_{c=1}^{d-5}\sum_{t\bmod 2^{d-c}}\left(2^{c+3}\sum_{r\bmod 2^{d-c-3}}e\mleft(\frac{(2t+1)r^2}{2^{d-c-3}}\mright)\right)^n e\mleft(-\frac{(2^{c-1}+t2^c)X_N}{2^d}\mright)\nonumber\\
    =&1+\sum_{c=d-3}^{d}2^{d-c}
    +2^{-nd}\sum_{c=1}^{d-5}\sum_{t\bmod 2^{d-c}}\left(2^{c+3}\sum_{r\bmod 2^{d-c-3}}e\mleft(\frac{(2t+1)r^2}{2^{d-c-3}}\mright)\right)^n e\mleft(-\frac{(2^{c-1}+t2^c)X_N}{2^d}\mright)\nonumber\\
    =&16+2^{-nd}\sum_{c=1}^{d-5}\sum_{t\bmod 2^{d-c}}\left(2^{c+3}\sum_{r\bmod 2^{d-c-3}}e\mleft(\frac{(2t+1)r^2}{2^{d-c-3}}\mright)\right)^n e\mleft(-\frac{(2^{c-1}+t2^c)X_N}{2^d}\mright).
\end{align}
Now by \eqref{gauss_formula_berndt}, $2^{c+3}\sum_{r\bmod 2^{d-c-3}}e\mleft({(2t+1)r^2}/{2^{d-c-3}}\mright)$ equals
\begin{align*}
    2^{(d+c+3)/2}(1+i)\varepsilon_{2t+1}^{-1}\left(\frac{2^{d-c-3}}{2t+1}\right)=4\cdot 2^{(d+c-1)/2}(1+i)\varepsilon_{2t+1}^{-1}\left(\frac{2^{d-c+1}}{2t+1}\right).
\end{align*}
Following our calculations in Proposition \ref{2_adic_square}, \eqref{partition_k_0_4_eval} becomes
\begin{align*}
    16+4^n 2^{-nd}\sum_{c=d-v_2(X_N)-2}^{d-5}&2^{d-c-1+n(d+c)/2}\left(\cos\mleft(\frac{\pi n}{4}-\frac{\pi X_N}{2^{d-c}}\mright)+(-1)^{n(d-c)}\cos\mleft(\frac{3\pi n}{4}-\frac{3\pi X_N}{2^{d-c}}\mright)\right)\\
        =16+\frac{4^n}{2}\sum_{c=5}^{v_2(X_N)+2}&2^{c(1-n/2)}\left(\cos\mleft(\frac{\pi n}{4}-\frac{\pi X_N}{2^c}\mright)+(-1)^{nc}\cos\mleft(\frac{3\pi n}{4}-\frac{3\pi X_N}{2^c}\mright)\right).
\end{align*}
Our final evaluations are very similar to Proposition \ref{2_adic_square}. We consider the $v_2(X_N)=4$ case separately for $n\equiv0\pmod4$ for the same reasons that we considered the $v_2(N)=0$ case separately in Proposition \ref{2_adic_square}. Let $n\equiv 0\pmod4$. For $v_2(X_N)>4$,
\begin{align*}
    \delta_2^{\ksymbol{k}}(N,n)&=16+\frac{4^n}{2}(-1)^{\lfloor n/4\rfloor}\left(\sum_{c=5}^{v_2(X_N)-1}2^{c\theta}2-2\cdot 2^{v_2(X_N)\theta}\right)\\
    &=16+(-1)^{\lfloor n/4\rfloor}4^n\left(\frac{2^{5\theta}-2^{v_2(X_N)\theta}}{1-2^{\theta}}-2^{v_2(X_N)\theta}\right).
\end{align*}
If $v_2(X_N)=4$, we have $\delta_2^{\ksymbol{k}}(N,n)=16$. 

Let $(X_N)_2=2^{-v_2(X_N)}X_N$ be the odd part of $X_N$. Suppose that $n\equiv2\pmod{4}$. Then
    \begin{align*}
        \delta_2^{\ksymbol{k}}(N,n)&=16+(-1)^{\lfloor n/4\rfloor}\frac{4^n}{2}\left(2^{\theta(v_2(X_N)+1)}2(-1)^{((X_N)_2-1)/2}\right)\\
        &=16+(-1)^{\lfloor n/4\rfloor}4^n\left(2^{\theta(v_2(X_N)+1)}(-1)^{((X_N)_2-1)/2}\right).
    \end{align*}

Finally, let $n$ be odd. Then if $v_2(X_N)$ is even,
    \begin{align*}
        \delta_2^{\ksymbol{k}}(N,n)=16+(-1)^{\lfloor n/4\rfloor}\frac{4^n}{2}\Bigg(&\sum_{\substack{5\leq c\leq v_2(X_N)-1\\ c\text{~odd}}}2^{c\theta}2^{1/2}(-1)^{(n-1)/2}+2^{(v_2(X_N)+1)\theta}2^{1/2}(-1)^{((X_N)_2-1)/2}\\
        &+2^{(v_2(X_N)+2)\theta}2^{1/2}\left(\sin\mleft(\frac{(X_N)_2\pi}{4}\mright)+(-1)^{(n-1)/2}\cos\mleft(\frac{(X_N)_2\pi}{4}\mright)\right)\Bigg)\\
        =16+(-1)^{\lfloor n/4\rfloor}4^n\frac{1}{2^{1/2}}&\Bigg((-1)^{(n-1)/2}\frac{2^{5\theta}-2^{\theta (v_2(X_N)+1)}}{1-2^{2\theta}}+2^{\theta(v_2(X_N)+1)}(-1)^{((X_N)_2-1)/2}\\
        &+2^{(v_2(X_N)+2)\theta}\left(\sin\mleft(\frac{(X_N)_2\pi}{4}\mright)+(-1)^{(n-1)/2}\cos\mleft(\frac{(X_N)_2\pi}{4}\mright)\right)\Bigg),
    \end{align*}
    and if $v_2(X_N)$ is odd,
     \begin{align*}
        \delta_2^{\ksymbol{k}}(N,n)&=16+(-1)^{\lfloor n/4\rfloor}\frac{4^n}{2}\left(\sum_{\substack{5\leq c\leq v_2(X_N)-2\\ c\text{~odd}}}2^{c\theta}2^{1/2}(-1)^{(n-1)/2}-2^{v_2(X_N)\theta}2^{1/2}(-1)^{(n-1)/2}\right)\\
        &=16+(-1)^{\lfloor n/4\rfloor}4^n\frac{(-1)^{(n-1)/2}}{2^{1/2}} \left(\frac{2^{5\theta}-2^{\theta v_2(X_N)}}{1-2^{2\theta}}-2^{\theta v_2(X_N)}\right).
    \end{align*}
\end{proof}

\section{Proof of Results}\label{section_proof_of_results}

We begin this section by proving Theorem \ref{more_general_theorem}, which follows immediately from Theorem \ref{modification_theorem} and Propositions \ref{odd_adic_square} and \ref{2_adic_square}. Indeed, with notation as in Theorem \ref{more_general_theorem}, using Remark \ref{r_equiv_remark} with these results gives
\begin{align*}
    r_n^{Q}(N)=r_{n}^{\equiv}(q_N)&=(4a)^{-n}\left(\prod_{q\mid 4a}\frac{\sigma_p^{\equiv}(q_N,n)}{\delta_p^{\square}(q_N,n)}\right) r_n^{\square}(q_N)+O(N^{(n-1)/4+\varepsilon})\\
    &=\frac{(4a)^{-n}}{\mathcal{N}_n(q_N,2a)}\left(\prod_{p\mid 4a}\delta_p^{Q}(N,n)\right)r_n^{\square}(q_N)+O(N^{(n-1)/4+\varepsilon}).
\end{align*}
Now towards Theorem \ref{main_result}, take $Q(x)=p_k(x)$. Then by Propositions \ref{odd_adic_polygonal} and \ref{2_adic_polygonal}, we have $\delta_p^{Q}(N,n)=\delta_p^{\ksymbol{k}}(N,n)=p^{v_p(k-2)}$ when $p\mid k-2$ is odd, and have $\delta_2^{Q}(N,n)=\delta_2^{\ksymbol{k}}(N,n)=2^{v_2(k-2)+3}+(-1)^{\lfloor n/4\rfloor} 4^n T(X_N,n,k)$. So $r_n^{\ksymbol{k}}(N)$ equals
\begin{align*}
    \frac{(2(k-2))^{-n}}{\mathcal{N}_n(X_N,k-2)} \left(2^{v_2(k-2)+3}+(-1)^{\lfloor \frac{n}{4}\rfloor}4^n T(X_N,n,k)\right)\left(\prod_{\substack{p\mid k-2\\ p\text{~odd}}}p^{v_p(k-2)}\right)r_n^{\square}(X_N)+O(N^{(n-1)/4+\varepsilon}).
\end{align*}
Note that
\begin{align*}
    1+(-1)^{\lfloor \frac{n}{4}\rfloor}2^{2n-v_2(k-2)-3} T(X_N,n,k)=1+(-1)^{\lfloor \frac{n}{4}\rfloor} 4^{n-2} T(X_N,n,k)
\end{align*}
for all $n,k$ since $T(X_N,n,k)=0$ if $k\not\equiv0\pmod{4}$. Thus,
\begin{align*}
        r_n^{\ksymbol{k}}(N)=\frac{1+(-1)^{\lfloor n/4\rfloor}4^{n-2}T(X_N,n,k)}{2^{n-3}(k-2)^{n-1}\mathcal{N}_n(X_N,k-2)}r_n^{\square}(X_N)+O(N^{(n-1)/4+\varepsilon})
\end{align*}
as claimed.

We now proceed to proving Corollary \ref{bringmann_corollary} since the proof of Corollary \ref{moment_estimates_corollary} will use the computations done for the proof of Corollary \ref{bringmann_corollary}. Corollary \ref{bringmann_corollary} follows immediately from Theorem \ref{main_result} and \cite[Theorem 1.1]{bringmann} up to computing $T(X_N,4,k)$ and $\mathcal{N}_4(X_N,k-2)$, which is what we now do.

\begin{proof}[Proof of Corollary \ref{bringmann_corollary}]
    Note that if $p$ is an odd prime divisor of $k-2$, then $v_p(8(k-2)N+4(k-4)^2)=0$. So by Proposition \ref{odd_adic_square}, $\delta_{p_i}^{\square}(X_N,4)=1-p_i^{-2}$. We use Propositions \ref{2_adic_square} and \ref{2_adic_polygonal} for our forthcoming evaluations.

    Suppose that $k$ is odd. Then, since $v_2(X_N)=2$,
    \begin{align*}
        1-f_2(X_N,4)=1+2^{-2}-\frac{2^{-1}-2^{-2}}{1-2^{-1}}=\frac{3}{4}.
    \end{align*}
    Noting that $T(X_N,4,k)=0$, we obtain
    \begin{align}\label{BC_k_odd}
        \frac{1-16T(X_N,4,k)}{2(k-2)^{3}\mathcal{N}_4(X_N,k-2)}=\frac{1}{2(k-2)^3}\cdot \frac{1}{3/4}\cdot \prod_{i=1}^{r}\frac{p_i^2}{p_i^2-1}=\frac{2}{3(k-2)^3}\prod_{i=1}^{r}\frac{p_i^2}{p_i^2-1}.
    \end{align}
    
    Now suppose that $k\equiv2\pmod{4}$. Then, since $v_2(X_N)=4$,
    \begin{align*}
        1-f_2(X_N,4)=1+2^{-4}-\frac{2^{-1}-2^{-4}}{1-2^{-1}}=\frac{3}{16}.
    \end{align*}
    Noting that $T(X_N,4,k)=0$, we obtain
    \begin{align}\label{BC_k_2_mod_4}
        \frac{1-16T(X_N,4,k)}{2(k-2)^{3}\mathcal{N}_4(X_N,k-2)}=\frac{1}{2(k-2)^3}\cdot \frac{1}{3/16}\cdot \prod_{i=1}^{r}\frac{p_i^2}{p_i^2-1}=\frac{8}{3(k-2)^3}\prod_{i=1}^{r}\frac{p_i^2}{p_i^2-1}.
    \end{align}

    Finally, suppose that $k\equiv0\pmod{4}$. Then
    \begin{align*}
        1-f_2(X_N,4)=1+2^{-v_2(X_N)}-\frac{2^{-1}-2^{-v_2(X_N)}}{1-2^{-1}}=3\cdot 2^{-v_2(X_N)}.
    \end{align*}
    If $N$ is even, then $v_2(X_N)\geq5$, whence we have
    \begin{align*}
        1-16T(X_N,4,k)=1-16\left(\frac{2^{-5}-2^{-v_2(X_N)}}{1-2^{-1}}-2^{-v_2(X_N)}\right)=3\cdot 2^{4-v_2(X_N)}.
    \end{align*}
    On the other hand, if $N$ is odd, then $v_2(X_N)=4$, whence we have $1-16T(X_N,4,k)=1$. Thus, if $N$ is even,
    \begin{align}\label{BC_k_4_mod_4_N_even}
        \frac{1-16T(X_N,4,k)}{2(k-2)^{3}\mathcal{N}_4(X_N,k-2)}=\frac{1}{2(k-2)^3}\cdot\frac{3\cdot 2^{4-v_2(X_N)}}{3\cdot 2^{-v_2(X_N)}}\cdot\prod_{i=1}^{r}\frac{p_i^2}{p_i^2-1}=\frac{8}{(k-2)^3}\prod_{i=1}^{r}\frac{p_i^2}{p_i^2-1},
    \end{align}
    and if $N$ is odd,
    \begin{align}\label{BC_k_4_mod_4_N_odd}
        \frac{1-16T(X_N,4,k)}{2(k-2)^{3}\mathcal{N}_4(X_N,k-2)}=\frac{1}{2(k-2)^3}\cdot\frac{1}{3\cdot 2^{-4}}\cdot\prod_{i=1}^{r}\frac{p_i^2}{p_i^2-1}=\frac{8}{3(k-2)^3}\prod_{i=1}^{r}\frac{p_i^2}{p_i^2-1}.
    \end{align}
    Theorem \ref{main_result} and \cite[Theorem 1.1]{bringmann} yield
    \begin{align*}
        r_{4,+}^{\ksymbol{k}}(N)=\frac{1}{16}\cdot\frac{1-16T(X_N,4,k)}{2(k-2)^{3}\mathcal{N}_4(X_N,k-2)} r_4^{\square}(X_N)+O(N^{15/16+\varepsilon}),
    \end{align*}
    thereby giving Corollary \ref{bringmann_corollary} via \eqref{BC_k_odd}--\eqref{BC_k_4_mod_4_N_odd}.
\end{proof}

We now move on to Corollary \ref{moment_estimates_corollary}, using Theorem \ref{main_result} to reduce the problem to estimating $\sum_{N\leq x}r_n^{\square}(X_N)$ and $\sum_{N\leq x}(-1)^N r_n^{\square}(X_N)$. From the computations done in the proof of Corollary \ref{bringmann_corollary}, we will need to split the proof of Corollary \ref{moment_estimates_corollary} into cases: when $k$ is odd, when $k\equiv2\pmod{4}$, and when $k\equiv0\pmod{4}$. This makes the proof somewhat lengthy, but the cases are similar, and the arguments are standard. In particular, since we appeal to a Perron's formula argument, before moving forward in proving Corollary \ref{moment_estimates_corollary}, it is prudent to calculate first the Dirichlet series of $r_4^{\square}(4N)^2\chi(N)$ and $(-1)^Nr_4^{\square}(4N)^2\chi(N)$ for $\chi$ a Dirichlet character. The argument of $r_4^{\square}$ is scaled since the common difference and constant term of the arithmetic progression $X_N$ are not coprime. We refer the reader interested in estimating other moments of $r_n^{\ksymbol{k}}$ (especially the second moments for $n=6$ and $n=8$) to \cite{dirichlet_sums_of_squares}, which we have found helpful in generating intuition for the calculation of $\mathcal{R}_4(s,\chi)$ and $\mathcal{A}_4(s,\chi)$.

\begin{lemma}\label{dirichlet_r_4_4N_chi_N}
    Let $\chi$ be a Dirichlet character and $\mathcal{R}_4(s,\chi)=\sum_{N\geq1}{r_4^{\square}(4N)^2\chi(N)}/{N^s}$ and $\mathcal{A}_4(s,\chi)=\sum_{N\geq1}{(-1)^Nr_4^{\square}(4N)^2\chi(N)}/{N^s}$. Then for $\mathfrak{R}(s)>3$,
    \begin{align*}
        \mathcal{R}_4(s,\chi)=576\frac{(1-\chi(2)2^{1-s})^2(1-\chi(2)2^{2-s})}{1-\chi(2)^2 2^{2-2s}}\cdot\frac{L(s,\chi)L(s-1,\chi)^2 L(s-2,\chi)}{L(2s-2,\chi^2)}
    \end{align*}
    and $\mathcal{A}_4(s,\chi)=(\chi(2)2^{1-s}-1)\mathcal{R}_4(s,\chi)$.
\end{lemma}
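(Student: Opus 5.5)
Our plan rests on Jacobi's four-square formula, $r_4^{\square}(m)=8\sum_{d\mid m,\,4\nmid d}d$. For $m=4N$ this becomes $r_4^{\square}(4N)=24\sigma(N_2)$, where $N_2$ is the odd part of $N$: the divisors of $4N$ not divisible by $4$ are $d$ and $2d$ with $d$ odd. Hence $r_4^{\square}(4N)^2=576\,\sigma(N_2)^2$. This is where the constant $576=24^2$ comes from, and it reduces the problem to the Dirichlet series of $g(N)\chi(N)$ with $g(N)=\sigma(N_2)^2$. The function $g$ is multiplicative, with $g(2^a)=1$ for all $a\geq0$ and $g(p^a)=\sigma(p^a)^2$ for odd $p$. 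Everything then factors into an Euler product, and the main task is to match local factors.

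For odd $p$ we will use the classical Ramanujan identity
\begin{align*}
\sum_{N\geq1}\frac{\sigma(N)^2}{N^s}=\frac{\zeta(s)\zeta(s-1)^2\zeta(s-2)}{\zeta(2s-2)}.
\end{align*}
Locally at $p$ this reads
\begin{align*}
\sum_{a\geq0}\sigma(p^a)^2x^a=\frac{1-p^2x^2}{(1-x)(1-px)^2(1-p^2x)}.
\end{align*}
If needed, this can be checked directly by writing $\sigma(p^a)=(p^{a+1}-1)/(p-1)$ and summing geometric series. Substituting $x=\chi(p)p^{-s}$, which is legitimate because $\chi$ is completely multiplicative, the product over odd $p$ equals the full ratio $L(s,\chi)L(s-1,\chi)^2L(s-2,\chi)/L(2s-2,\chi^2)$ divided by its Euler factor at $2$.

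The $2$-factor of $\mathcal{R}_4$ is $\sum_{a\geq0}\chi(2)^a2^{-as}=(1-y)^{-1}$ with $y=\chi(2)2^{-s}$. The $2$-factor of the $L$-ratio is $(1-4y^2)/((1-y)(1-2y)^2(1-4y))$. Dividing the first by the second gives
\begin{align*}
\frac{(1-2y)^2(1-4y)}{1-4y^2},
\end{align*}
which is exactly the stated prefactor, since $2y=\chi(2)2^{1-s}$, $4y=\chi(2)2^{2-s}$ and $4y^2=\chi(2)^22^{2-2s}$. If $\chi(2)=0$ the prefactor is $1$, consistent with this. Absolute convergence for $\Re(s)>3$ follows from $\sigma(N)^2\ll N^{2+\varepsilon}$.

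For $\mathcal{A}_4$, we write $(-1)^N=1-2\cdot\mathbf{1}_{2\mid N}$. The even-$N$ part is $\sum_{N}g(2N)\chi(2N)(2N)^{-s}$, and since $g(2N)=g(N)$ this equals $\chi(2)2^{-s}\mathcal{R}_4(s,\chi)/576\cdot576$, that is, $y\,\mathcal{R}_4$. Hence
\begin{align*}
\mathcal{A}_4=\mathcal{R}_4-2y\mathcal{R}_4=(1-\chi(2)2^{1-s})\mathcal{R}_4.
\end{align*}
This has the opposite sign from the statement as written. I would double-check with $N=1$: the coefficient of $1^{-s}$ in $\mathcal{A}_4$ is $(-1)^1\cdot576=-576$. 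On the right side of the statement the coefficient is $-576$, while my formula gives $+576$. So the statement's sign, $(\chi(2)2^{1-s}-1)$, is right, and the correct decomposition is $(-1)^N=-1+2\cdot\mathbf{1}_{2\mid N}$, giving $\mathcal{A}_4=(2y-1)\mathcal{R}_4$, matching the statement.

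The only step requiring real care is the local verification of Ramanujan's factor, together with the bookkeeping of the $2$-part; neither is a genuine obstacle.
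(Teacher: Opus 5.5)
Your proposal is correct and follows the paper's proof essentially step for step. You use $r_4^{\square}(4N)=24\sigma(N_2)$ via Jacobi, then Ramanujan's local factor $\frac{1-p^2z_p^2}{(1-z_p)(1-pz_p)^2(1-p^2z_p)}$ at odd $p$ with $z_p=\chi(p)p^{-s}$, then compare the $2$-factor $1/(1-z_2)$ with that of the $L$-ratio; for $\mathcal{A}_4$, your split $(-1)^N=-1+2\cdot\mathbf{1}_{2\mid N}$ together with $g(2N)=g(N)$ is just a rephrasing of the paper's replacement of the $2$-local factor by $-1+\sum_{v\geq1}z_2^v=(2z_2-1)/(1-z_2)$. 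In a final write-up, keep only this corrected decomposition, since your first attempt $1-2\cdot\mathbf{1}_{2\mid N}$ equals $-(-1)^N$.
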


\begin{proof}
    We note that $\sum_{N\geq1}{r_4^{\square}(4N)^2\chi(N)}/{N^s}$ is absolutely convergent for $\mathfrak{R}(s)>3$ since $r_4^{\square}(N)\ll N\log\log N$. Letting $N_2=2^{-v_2(N)}N$ be the odd part of $N$, recall the identity $r_4^{\square}(4N)=24\sigma(N_2)$. For each prime $p$, let $z_p=\chi(p) p^{-s}$. Then
    \begin{align*}
        &\sum_{N\geq1}\frac{\sigma(N_2)^2\chi(N)}{N^s}=\left(\sum_{v\geq0}\sigma(1)^2 z_2^v\right)\prod_{p\text{~odd}}\sum_{v\geq0} \sigma(p^v)^2 z_p^v\\
        =~&\frac{1}{1-z_2}\prod_{p\text{~odd}}\sum_{v\geq0} \left(\frac{1-p^{v+1}}{1-p}\right)^2 z_p^v=\frac{1}{1-z_2}\prod_{p\text{~odd}}\frac{1-p^2 z_p^2}{(1-z_p)(1-pz_p)^2(1-p^2z_p)}.
    \end{align*}
    So
    \begin{align*}
        \mathcal{R}_4(s,\chi)&=576\frac{1}{1-\chi(2)2^{-s}}\prod_{p\text{~odd}}\frac{1-\chi(p)^2 p^{2-2s}}{(1-\chi(p) p^{-s})(1-\chi(p) p^{1-s})^2(1-\chi(p)p^{2-s})}\\
        &=576\frac{(1-\chi(2)2^{1-s})^2(1-\chi(2)2^{2-s})}{1-\chi(2)^2 2^{2-2s}}\cdot\frac{L(s,\chi)L(s-1,\chi)^2 L(s-2,\chi)}{L(2s-2,\chi^2)}.
    \end{align*}
    Now for $\mathcal{A}_4(s,\chi)$, we have
    \begin{align*}
        \mathcal{A}_4(s,\chi)&=576\sum_{N\geq1}\frac{(-1)^N\sigma(N_2)^2\chi(N)}{N^s}=576\left(-\sigma(1)^2+\sum_{v\geq1}\sigma(1)^2 z_2^v\right)\prod_{p\text{~odd}}\sum_{v\geq0} \sigma(p^v)^2 z_p^v\\
        &=\frac{2z_2-1}{1-z_2}\left(\frac{1}{1-z_2}\right)^{-1} \mathcal{R}_4(s,\chi)=(\chi(2)2^{1-s}-1)\mathcal{R}_4(s,\chi),
    \end{align*}
    as desired.
\end{proof}

\begin{proof}[Proof of Corollary \ref{moment_estimates_corollary}]
    Let $p_1,\dots,p_r$ be the odd prime divisors of $k-2$. We first handle the case where $k$ is odd. By Theorem \ref{main_result} and \eqref{BC_k_odd},
    \begin{align*}
        \sum_{N\leq x}r_4^{\ksymbol{k}}(N)^2&=\sum_{N\leq x}\left(\frac{2}{3(k-2)^3}\left(\prod_{i=1}^{r}\frac{p_i^2}{p_i^2-1}\right)r_4^{\square}(8(k-2)N+4(k-4)^2)+O(N^{3/4+\varepsilon})\right)^2\\
        &=\frac{4}{9(k-2)^6}\left(\prod_{i=1}^{r}\frac{p_i^2}{p_i^2-1}\right)^2\sum_{N\leq x}r_4^{\square}(8(k-2)N+4(k-4)^2)^2+O(x^{11/4+\varepsilon}),
    \end{align*}
    where we have the error of $O(x^{11/4+\varepsilon})$ since $\sum_{N\leq x}r_4^{\square}(N)N^{3/4+\varepsilon}\ll x^{11/4+\varepsilon}$, which follows by summation by parts since $\sum_{N\leq x}r_4^{\square}(N)\ll x^2$. Since $(4(k-4)^2,8(k-2))=4$, we have the orthogonality relation
    \begin{align*}
        \mathbf{1}_{N\equiv4(k-4)^2\bmod{8(k-2)}}=\frac{\mathbf{1}_{4\mid N}}{\phi(2(k-2))}\sum_{\chi} \overline{\chi((k-4)^2)}\chi(N/4),
    \end{align*}
    where the sum is taken over the Dirichlet characters modulo $2(k-2)$. Letting $y=2(k-2)x+(k-4)^2$, we then have for some constant $B_k$ that $\sum_{N\leq x}r_4^{\square}(8(k-2)N+4(k-4)^2)^2+B_k$ equals
    \begin{align*}
        \sum_{N\leq 4y}r_4^{\square}(N)^2\mathbf{1}_{N\equiv4(k-4)^2\bmod{8(k-2)}}=\frac{1}{\phi(2(k-2))}\sum_{\chi} \overline{\chi((k-4)^2)}\sum_{N\leq y}r_4^{\square}(4N)^2\chi(N).
    \end{align*}
    We proceed by estimating $\sum_{N\leq y}r_4^{\square}(4N)^2\chi(N)$. By Lemma \ref{dirichlet_r_4_4N_chi_N},
    \begin{align*}
        \mathcal{R}_4(s,\chi)\defeq 576\frac{(1-\chi(2)2^{1-s})^2(1-\chi(2)2^{2-s})}{1-\chi(2)^2 2^{2-2s}}\cdot\frac{L(s,\chi)L(s-1,\chi)^2 L(s-2,\chi)}{L(2s-2,\chi^2)}
    \end{align*}
    is the meromorphic extension of $\sum_{N\geq1}{r_4^{\square}(4N)^2\chi(N)}/{N^s}$ to $\CC$. By Perron's formula (see \cite[Corollary 5.3]{Montgomery_Vaughan_2006}), we have
    \begin{align*}
        \sum_{N\leq y}r_4^{\square}(4N)^2\chi(N)=\frac{1}{2\pi i}\int_{\sigma_0-iT}^{\sigma_0+iT}\mathcal{R}_4(s,\chi)\frac{y^s}{s}ds+O(y^{3+\varepsilon}/T)
    \end{align*}
    for $\sigma_0>3$. Let $G(s)=\mathcal{R}_4(s,\chi){y^s}/{s}$ and let $\delta,\delta'>0$ be arbitrary. Taking $\sigma_0=2+\delta$, we write by the residue theorem the vertical integral $\int_{\sigma_0-iT}^{\sigma_0+iT}G(s)ds$ as
    \begin{align*}
        2\pi i\text{Res}(G(s),3)-\int_{(3+\delta)+iT}^{(2+\delta')+iT}G(s)ds-\int_{(2+\delta')+iT}^{(2+\delta')-iT}G(s)ds-\int_{(2+\delta')-iT}^{(3+\delta)-iT}G(s)ds.
    \end{align*}
    We proceed by bounding the three integrals above, which we henceforth denote as $E_1$, $E_2$, and $E_3$, respectively. We note that for $\Re(s)\geq 2+\delta'$, we have
    \begin{align*}
        \frac{(1-\chi(2)2^{1-s})^2(1-\chi(2)2^{2-s})}{1-\chi(2)^2 2^{2-2s}},L(s,\chi),L(s-1,\chi),L(2s-2,\chi)^{-1}\ll1.
    \end{align*}
    Thus, it suffices to estimate $L(s-2,\chi)$ along the contours of $E_1$, $E_2$, and $E_3$. For this, we use the standard convexity bound
    \begin{align}\label{convexity_bound}
        L(\sigma+it,\chi)\ll (1+|t|)^{\max(0,(1-\sigma)/2)+\varepsilon}
    \end{align}
    for $\sigma\geq0$. The estimation of the horizontal integrals $E_1$ and $E_3$ is identical, so we only consider $E_3$. By \eqref{convexity_bound}, we have
    \begin{align*}
        E_3&=\int_{2+\delta'}^{3+\delta}G(u-iT)du=\int_{2+\delta'}^{3}G(u-iT)du+\int_{3}^{3+\delta}G(u-iT)du\\
        &\ll\int_{2+\delta'}^{3}T^{(1-(u-2))/2+\varepsilon}\frac{y^u}{T}du+\int_{3}^{3+\delta} T^{\varepsilon}\frac{y^u}{T}du\ll y^{3+\delta}/T^{1-\varepsilon}.
    \end{align*} 
    Following in identical fashion, we have $E_1\ll y^{3+\delta}/T^{1-\varepsilon}$. Now for the vertical integral $E_2$, we have by \eqref{convexity_bound} that
    \begin{align*}
        E_2&=\int_{-T}^{T}G(2+\delta'-it)dt\ll\int_{-T}^{T}(1+|t|)^{(1-\delta')/2+\varepsilon}\frac{y^{2+\delta'}}{1+|t|}dt\\
        &=y^{2+\delta'}\int_{-T}^{T}(1+|t|)^{-(1+\delta')/2+\varepsilon}dt\ll y^{2+\delta'} T^{1/2+\varepsilon}.
    \end{align*}
    Since $\delta,\delta'>0$ were arbitrary, by taking $T=y^{2/3}$ we obtain
    \begin{align*}
        \sum_{N\leq y}r_4^{\square}(4N)^2\chi(N)=\text{Res}(\mathcal{R}_4(s,\chi)y^s/s,3)+O(y^{7/3+\varepsilon}).
    \end{align*}
    The error estimate of $O(y^{7/3+\varepsilon})$ will suffice for our purposes. Let $\chi_0$ be the principal character modulo $2(k-2)$ and
    \begin{align*}
        \Pi\defeq\prod_{i=1}^{r}\frac{(1-p_i^{-3})(1-p_i^{-2})^2(1-p_i^{-1})}{(1-p_i^{-4})}.
    \end{align*}
    Upon noting that $\chi_0(2)=0$, we have the evaluation
    \begin{align*}
        \text{Res}(\mathcal{R}_4(s,\chi_0)y^s/s,3)=576\frac{\zeta(3)\zeta(2)^2}{\zeta(4)}\prod_{p\mid 2(k-2)}\frac{(1-p^{-3})(1-p^{-2})^2(1-p^{-1})}{(1-p^{-4})}\frac{y^3}{3}
        =126\zeta(3)\Pi y^3.
    \end{align*}
    Therefore
    \begin{align*}
        \frac{1}{\phi(2(k-2))}\sum_{\chi} \overline{\chi((k-4)^2)}\sum_{N\leq y}r_4^{\square}(4N)^2\chi(N)= \frac{126\zeta(3)\Pi}{\phi(2(k-2))}y^3+O(y^{7/3+\varepsilon}).
    \end{align*}
    Thus,
    \begin{align*}
        \sum_{N\leq x} r_4^{\ksymbol{k}}(N)^2&=\frac{4}{9(k-2)^6}\left(\prod_{i=1}^{r}\frac{p_i^2}{p_i^2-1}\right)^2 \frac{126\zeta(3)\Pi}{\phi(2(k-2))}(2(k-2)x)^3+O(x^{11/4+\varepsilon})\\
        &=\frac{56\zeta(3)}{(k-2)^7}\prod_{i=1}^{r}\frac{1-p_i^{-3}}{1-p_i^{-4}}(2(k-2)x)^3+O(x^{11/4+\varepsilon})\\
        &=\frac{448\zeta(3)}{(k-2)^4}\prod_{i=1}^{r}\frac{1-p_i^{-3}}{1-p_i^{-4}} x^3+O(x^{11/4+\varepsilon}).
    \end{align*}

    We now handle the case where $k\equiv2\pmod{4}$, for which we may borrow much of the computation from the $k$ odd case. By Theorem \ref{main_result} and \eqref{BC_k_2_mod_4},
    \begin{align*}
        \sum_{N\leq x}r_4^{\ksymbol{k}}(N)^2&=\sum_{N\leq x}\left(\frac{8}{3(k-2)^3}\left(\prod_{i=1}^{r}\frac{p_i^2}{p_i^2-1}\right)r_4^{\square}(8(k-2)N+4(k-4)^2)+O(N^{3/4+\varepsilon})\right)^2\\
        &=\frac{64}{9(k-2)^6}\left(\prod_{i=1}^{r}\frac{p_i^2}{p_i^2-1}\right)^2\sum_{N\leq x}r_4^{\square}(8(k-2)N+4(k-4)^2)^2+O(x^{11/4+\varepsilon}).
    \end{align*}
    Since $(4(k-4)^2,8(k-2))=16$, we have the orthogonality relation
    \begin{align*}
        \mathbf{1}_{N\equiv4(k-4)^2\bmod{8(k-2)}}=\frac{\mathbf{1}_{16\mid N}}{\phi((k-2)/2)}\sum_{\chi} \overline{\chi((k-4)^2/4)}\chi(N/16),
    \end{align*}
    where the sum is taken over the Dirichlet characters modulo $(k-2)/2$. Letting $y=((k-2)/2)x+(k-4)^2/4$, we then have for some constant $B_k$ that $\sum_{N\leq x}r_4^{\square}(8(k-2)N+4(k-4)^2)^2+B_k$ equals
    \begin{align*}
        \sum_{N\leq 16y}r_4^{\square}(N)^2\mathbf{1}_{N\equiv4(k-4)^2\bmod{8(k-2)}}=\frac{1}{\phi((k-2)/2)}\sum_{\chi} \overline{\chi((k-4)^2/4)}\sum_{N\leq y}r_4^{\square}(16N)^2\chi(N).
    \end{align*}
    Since $r_4^{\square}(16N)=r_4^{\square}(4N)$, the estimation is identical to that in the $k$ odd case. Since ${2\mid(k-2)/2}$, we still have $\chi_0(2)=0$, where $\chi_0$ is the principal character modulo $(k-2)/2$. So
    \begin{align*}
        \sum_{N\leq y}r_4^{\square}(16N)^2\chi(N)= \mathbf{1}_{\chi=\chi_0}126\zeta(3)\Pi y^3+O(y^{7/3+\varepsilon})
    \end{align*} 
    and hence
    \begin{align*}
        \frac{1}{\phi((k-2)/2)}\sum_{\chi} \overline{\chi((k-4)^2/4)}\sum_{N\leq y}r_4^{\square}(16N)^2\chi(N)=\frac{126\zeta(3)\Pi}{\phi((k-2)/2)}y^3+O(y^{7/3+\varepsilon}).
    \end{align*}
    Thus,
    \begin{align*}
        \sum_{N\leq x} r_4^{\ksymbol{k}}(N)^2&=\frac{64}{9(k-2)^6}\left(\prod_{i=1}^{r}\frac{p_i^2}{p_i^2-1}\right)^2 \frac{126\zeta(3)\Pi}{\phi\mleft(\frac{k-2}{2}\mright)} \left(\frac{(k-2)x}{2}\right)^3+O(x^{11/4+\varepsilon})\\
        &=\frac{3584\zeta(3)}{(k-2)^7}\prod_{i=1}^{r}\frac{1-p_i^{-3}}{1-p_{i}^{-4}}\left(\frac{(k-2)x}{2}\right)^3+O(x^{11/4+\varepsilon})\\
        &=\frac{448\zeta(3)}{(k-2)^4}\prod_{i=1}^{r}\frac{1-p_i^{-3}}{1-p_{i}^{-4}} x^3+O(x^{11/4+\varepsilon}).
    \end{align*}
    
    We finally handle the case where $k\equiv0\pmod{4}$, which is more involved. By Theorem \ref{main_result}, \eqref{BC_k_4_mod_4_N_even}, and \eqref{BC_k_4_mod_4_N_odd},
    \begin{align*}
        \sum_{N\leq x}r_4^{\ksymbol{k}}(N)^2&=\sum_{N\leq x}\left(\frac{8(2+(-1)^N)}{3(k-2)^3}\left(\prod_{i=1}^{r}\frac{p_i^2}{p_i^2-1}\right)r_4^{\square}(8(k-2)N+4(k-4)^2)+O(N^{3/4+\varepsilon})\right)^2\\
        &=\sum_{N\leq x}\frac{320}{9(k-2)^6}\left(\prod_{i=1}^{r}\frac{p_i^2}{p_i^2-1}\right)^2r_4^{\square}(8(k-2)N+4(k-4)^2)^2\\
        &+\sum_{N\leq x}\frac{256}{9(k-2)^6}\left(\prod_{i=1}^{r}\frac{p_i^2}{p_i^2-1}\right)^2 (-1)^N r_4^{\square}(8(k-2)N+4(k-4)^2)^2+O(x^{11/4+\varepsilon}).
    \end{align*}
    We first handle $\sum_{N\leq x}r_4^{\square}(8(k-2)N+4(k-4)^2)^2$. Let $y=((k-2)/2)x+(k-4)^2/4$. Since $(4(k-4)^2,8(k-2))=16$, we have for some constant $B_k$ that $\sum_{N\leq x}r_4^{\square}(8(k-2)N+4(k-4)^2)^2+B_k$ equals
    \begin{align*}
        \sum_{N\leq 16y}r_4^{\square}(N)^2\mathbf{1}_{N\equiv4(k-4)^2\bmod{8(k-2)}}=\frac{1}{\phi((k-2)/2)}\sum_{\chi} \overline{\chi((k-4)^2/4)}\sum_{N\leq y}r_4^{\square}(16N)^2\chi(N),
    \end{align*}
    where the sum over $\chi$ is over the Dirichlet characters modulo $(k-2)/2$. Since $r_4^{\square}(16N)=r_4^{\square}(4N)$, the estimation is nearly identical to that in the $k$ odd case. However, since $(k-2)/2$ is odd, we now have $\chi_0(2)=1$, where $\chi_0$ is the principal character modulo $(k-2)/2$. So
    \begin{align*}
        \sum_{N\leq y}r_4^{\square}(16N)^2\chi(N)&=
        \text{Res}\mleft(\mathcal{R}_4(s,\chi_0)\frac{y^s}{s},3\mright)+O(y^{7/3+\varepsilon})\\
        &=\mathbf{1}_{\chi=\chi_0}576\frac{(1-2^{-2})^2(1-2^{-1})}{1-2^{-4}}\frac{\zeta(3)\zeta(2)^2}{\zeta(4)}\Pi\frac{y^3}{3}+O(y^{7/3+\varepsilon})\\
        &=\mathbf{1}_{\chi=\chi_0}144\zeta(3)\Pi y^3+O(y^{7/3+\varepsilon}).
    \end{align*}
    Hence
    \begin{align*}
        \frac{1}{\phi(\frac{k-2}{2})}\sum_{\chi} \overline{\chi\mleft(\frac{(k-4)^2}{4}\mright)}\sum_{N\leq y}r_4^{\square}(16N)^2\chi(N)=\frac{144\zeta(3)\Pi}{\phi(\frac{k-2}{2})}y^3+O(y^{7/3+\varepsilon}).
    \end{align*}
    We now handle $\sum_{N\leq x}(-1)^N r_4^{\square}(8(k-2)N+4(k-4)^2)^2$. For some constant $B_k'$, we have
    \begin{align*}
        \sum_{N\leq x}(-1)^N r_4^{\square}(8(k-2)N+4(k-4)^2)^2+B_k'=\sum_{N\leq 16y}(-1)^{\frac{N-4(k-4)^2}{8(k-2)}}r_4^{\square}(N)^2\mathbf{1}_{N\equiv4(k-4)^2\bmod{8(k-2)}}.
    \end{align*}
    Now for $N\equiv 4(k-4)^2\pmod{8(k-2)}$,
    \begin{align*}
        \frac{N-4(k-4)^2}{8(k-2)}=\frac{(N/16)-((k-4)^2/4)}{(k-2)/2}\equiv(N/16)-((k-4)^2/4)\equiv N/16\pmod{2}.
    \end{align*}
    So $\sum_{N\leq x}(-1)^N r_4^{\square}(8(k-2)N+4(k-4)^2)^2+B_k'$ equals
    \begin{align*}
        \frac{1}{\phi((k-2)/2)}\sum_{\chi} \overline{\chi((k-4)^2/4)}\sum_{N\leq y}(-1)^N r_4^{\square}(16N)^2\chi(N).
    \end{align*}
    By Lemma \ref{dirichlet_r_4_4N_chi_N}, we have that $\mathcal{A}_4(s,\chi)\defeq(\chi(2)2^{1-s}-1)\mathcal{R}_4(s,\chi)$ is the meromorphic extension of $\sum_{N\geq1}(-1)^N r_4^{\square}(4N)\chi(N)/N^s=\sum_{N\geq1}(-1)^N r_4^{\square}(16N)\chi(N)/N^s$ to $\CC$. Since $\chi(2)2^{1-s}-1$ is uniformly bounded on $\Re(s)\geq 2+\delta'$ for any $\delta'>0$, the same Perron's formula argument from earlier yields
    \begin{align*}
        \sum_{N\leq y}(-1)^N r_4^{\square}(16N)^2\chi(N)&=\text{Res}(\mathcal{A}_4(s,\chi)y^s/s,3)+O(y^{7/3+\varepsilon})\\
        &=\mathbf{1}_{\chi=\chi_0}(-108\zeta(3)\Pi) y^3+O(y^{7/3+\varepsilon}).
    \end{align*}
    Hence,
    \begin{align*}
        \frac{1}{\phi(\frac{k-2}{2})}\sum_{\chi} \overline{\chi\mleft(\frac{(k-4)^2}{4}\mright)}\sum_{N\leq y}(-1)^N r_4^{\square}(16N)^2\chi(N)= -\frac{108\zeta(3)\Pi}{\phi(\frac{k-2}{2})}y^3+O(y^{7/3+\varepsilon}).
    \end{align*}
    Thus,
    \begin{align*}
        \sum_{N\leq x} r_4^{\ksymbol{k}}(N)^2&= \frac{320}{9(k-2)^6}\left(\prod_{i=1}^{r}\frac{p_i^2}{p_i^2-1}\right)^2 \frac{144\zeta(3)\Pi}{\phi(\frac{k-2}{2})} \left(\frac{(k-2)x}{2}\right)^3\\
        &-\frac{256}{9(k-2)^6}\left(\prod_{i=1}^{r}\frac{p_i^2}{p_i^2-1}\right)^2 \frac{108\zeta(3)\Pi}{\phi(\frac{k-2}{2})}\left(\frac{(k-2)x}{2}\right)^3+O(x^{11/4+\varepsilon})\\
        &=\left(320\cdot 288-256\cdot 216\right)\frac{\zeta(3)}{9(k-2)^7}\prod_{i=1}^{r}\frac{1-p_i^{-3}}{1-p_i^{-4}}\left(\frac{(k-2)x}{2}\right)^3+O(x^{11/4+\varepsilon})\\
        &=\frac{512\zeta(3)}{(k-2)^4}\prod_{i=1}^{r}\frac{1-p_i^{-3}}{1-p_i^{-4}} x^3+O(x^{11/4+\varepsilon}).
    \end{align*}
\end{proof}

Finally, we turn to Corollary \ref{conv_2_adically_corollary}. Before giving its proof, we first note a simple lemma, which follows from Jacobi's four-square theorem \cite{jacobi_exact_formulas}.

\begin{lemma}\label{conv_2_adically_lemma_squares}
    If a strictly increasing infinite subsequence $a_N$ of positive integers does not converge $2$-adically to $0$, then $r_4^{\square}(a_N)\geq \varepsilon\cdot N$ infinitely often for some $\varepsilon>0$.
\end{lemma}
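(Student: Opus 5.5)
Not converging $2$-adically to $0$ means $v_2(a_N)$ stays bounded along a subsequence. We then combine this with Jacobi's formula $r_4^{\square}(m)=8\sum_{d\mid m,\,4\nmid d}d$ and the fact that a strictly increasing sequence of positive integers satisfies $a_N\ge N$.

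First we unpack the hypothesis. If $a_N\not\to0$ in $\ZZ_2$, then there is some $K\ge0$ with $v_2(a_N)\le K$ for infinitely many $N$: otherwise $v_2(a_N)\to\infty$, which is exactly $2$-adic convergence to $0$. Fix such a $K$ and let $\mathcal{I}$ denote the infinite set of indices $N$ with $v_2(a_N)\le K$.

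Next we bound $r_4^{\square}$ from below by keeping only the divisor $d=m_2$, the odd part of $m$. Since $m_2$ is odd, $4\nmid m_2$, so
\begin{align*}
r_4^{\square}(m)\ge 8\,m_2=8\cdot 2^{-v_2(m)}m .
\end{align*}
For $N\in\mathcal{I}$ this gives $r_4^{\square}(a_N)\ge 8\cdot 2^{-K}a_N$.

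Finally, because $a_N$ is a strictly increasing sequence of positive integers, $a_N\ge N$. Hence $r_4^{\square}(a_N)\ge 2^{3-K}N$ for every $N\in\mathcal{I}$, i.e.\ infinitely often, and we may take $\varepsilon=2^{3-K}$. Indexing is the only subtle point: we read the conclusion along the original index $N$ of the sequence, not along a re-indexed subsequence, and the bound $a_N\ge N$ holds for the original indexing.

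There is no serious obstacle. The only care needed is the correct reading of ``does not converge $2$-adically to $0$'' as bounded valuation along infinitely many terms, and citing Jacobi's four-square formula in the form restricted to divisors $d$ with $4\nmid d$.
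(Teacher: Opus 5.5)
Your proof is correct and follows essentially the same route as the paper: you use bounded $2$-adic valuation along infinitely many indices, then Jacobi's formula bounded below by the contribution of the odd part, then $a_N\ge N$. The only cosmetic difference is that you keep the single divisor $d=m_2$, which gives the explicit $\varepsilon=2^{3-K}$, whereas the paper bounds the sum below by $8\sigma$ of the odd part and then by the odd part itself.
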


\begin{proof}
    Suppose that $a_N$ is a strictly increasing infinite subsequence that does not converge $2$-adically to $0$. Then there exists an $\varepsilon>0$ such that $2^{-v_2(a_{N})}\geq\varepsilon$ for infinitely many $N$. So
    \begin{align*}
        r_4^{\square}(a_{N})=8\sum_{4\nmid d\mid a_{N}}d\geq 8\sigma(2^{-v_2(a_{N})}a_{N}) > 2^{-v_2(a_{N})}a_{N}\geq \varepsilon\cdot a_{N}\geq \varepsilon\cdot N
    \end{align*}
    infinitely often.
\end{proof}

We now prove Corollary \ref{conv_2_adically_corollary} by using Corollary \ref{bringmann_corollary} and Lemma \ref{conv_2_adically_lemma_squares}. 

\begin{proof}[Proof of Corollary \ref{conv_2_adically_corollary}]
    Suppose that $k\equiv0\pmod{4}$. Suppose that $a_N$ is a strictly increasing infinite subsequence of positive integers that does not converge $2$-adically to $(k-4)^2/(4-2k)$. We note that $(k-4)^2/(4-2k)$ is indeed a $2$-adic integer, which is easy to see by writing $(k-4)^2/(4-2k)=4(r-1)^2/(1-2r)$, where $r=k/4$. Since $a_N$ does not converge $2$-adically to $(k-4)^2/(4-2k)$, we have that $8(k-2)a_N+4(k-4)^2$ does not converge $2$-adically to $0$. Thus, by Lemma \ref{conv_2_adically_lemma_squares}, $r_4^{\square}(8(k-2)a_N+4(k-4)^2)\geq \varepsilon\cdot N$ infinitely often for some $\varepsilon>0$. So by Corollary \ref{bringmann_corollary}, $r_{4,+}^{\ksymbol{k}}(a_N)$ is unbounded.

    Suppose that $k\not\equiv0\pmod{4}$. Let $b_N$ be an arbitrary strictly increasing infinite subsequence of positive integers. If $k$ is odd, then $v_2(8(k-2)b_N+4(k-4)^2)=2$, and if $k\equiv2\pmod{4}$, then $v_2(8(k-2)b_N+4(k-4)^2)=4$. In either case, $8(k-2)b_N+4(k-4)^2$ does not converge $2$-adically to $0$. So by Lemma \ref{conv_2_adically_lemma_squares} and Corollary \ref{bringmann_corollary}, $r_{4,+}^{\ksymbol{k}}(b_N)$ is unbounded.
\end{proof}

\section{Directions for Future Work}\label{directions_for_future_work}

A natural continuation of our work is to attempt to improve the $O(N^{(n-1)/4+\varepsilon})$ error in Theorem \ref{main_result}. Since we are restricting to the class of quadratics $p_k$, it is reasonable for one to hope for improvements to the error, at least in some cases. Curiously, we have verified this to be the case in the extreme when $n=4$ and $k=6$, in that the error is in fact zero. Indeed, as stated earlier, we have the exact identity
\begin{align}\label{exact_identity_r_4_6}
    r_4^{\ksymbol{6}}(N)=\frac{1}{24}r_4^{\square}(32N+16).
\end{align}
This is not too difficult to prove elementarily. We show that the solutions to $\sum_{i=1}^{4}p_6(x_i)=N$ over $\ZZ^4$ are equinumerous to the solutions to $\sum_{i=1}^{4}p_3(m_i)=N$ over $\NN_0^4$. Completing the square to $\sum_{i=1}^{4}p_6(x_i)=N$ gives the equation $\sum_{i=1}^{4}x_i^2=32N+16$ with the congruence restriction $x_i\equiv6\pmod{8}$. Since $x_i\equiv6\pmod{8}$ is equivalent to $x_i/2=2y_i+1$ for some odd integer $y_i$, the equation becomes $\sum_{i=1}^{4}(2y_i+1)^2=8N+4$, where each $y_i$ is odd. Since $(2y_i+1)^2=8p_3(y_i)+1$, we obtain the equation $\sum_{i=1}^{4}p_3(y_i)=N$ where each $y_i$ is odd. Since $p_3(y_i)=p_3(-y_i-1)$, we then obtain the equation $\sum_{i=1}^{4}p_3(m_i)=N$, where each $m_i\in\NN_0$. Finally, we obtain \eqref{exact_identity_r_4_6} by noting that $r_{4,+}^{\ksymbol{3}}(N)=\sigma(2N+1)=(24)^{-1}r_4^{\square}(32N+16)$, where the first equality is a classical result of Legendre \cite{triangular_legendre} and the second equality follows from the identity $r_4^{\square}(2N)=24\sigma(N_2)$, where $N_2=2^{-v_2(N)}N$ is the odd part of $N$.

The case where $n=4$ and $k=6$ is obviously an extreme example in error improvement, but it does prompt the following question: for which values of $n$ and $k$ do we have that $r_n^{\ksymbol{k}}(N)$ is exactly equal to the main term in Theorem \ref{main_result}?

A more straightforward yet still interesting direction for future work is to estimate more moments of $r_n^{\ksymbol{k}}$ using Theorem \ref{main_result}. In addition to the proof of Corollary \ref{moment_estimates_corollary}, we refer the reader interested in doing so to \cite{dirichlet_sums_of_squares}. We also suggest trying to improve the error of \eqref{second_moment_of_r_4_k}, though this would likely have to be done with other methods that circumvent the use of Theorem \ref{main_result}.

For what we anticipate to be a more substantial undertaking, we also suggest generalizing Corollary \ref{bringmann_corollary} to all $n\geq4$. This generalization of Corollary \ref{bringmann_corollary} is essentially equivalent to proving the heuristic that $r_{n,+}^{\ksymbol{k}}(N)=2^{-n} r_{n}^{\ksymbol{k}}(N)+o(N^{n/2-1})$. Hence, we expect this direction to be quite challenging.

Lastly, we suggest furthering the $k\equiv0\pmod{4}$ case of Corollary \ref{conv_2_adically_corollary} in the sense of determining what other conditions the subsequences on which $r_{4,+}^{\ksymbol{k}}$ is bounded must satisfy. Determining sufficient conditions for boundedness would also be very interesting, though these may be more elusive.

\section*{Acknowledgments}
    This project was conducted as the author’s undergraduate thesis. We thank Krishnaswami Alladi for advising this thesis work and his vital mentorship. We also recognize Roger Heath-Brown and Trevor Wooley for their valuable advice, suggestions, and referrals to relevant literature.

\bibliography{main}{}
\bibliographystyle{amsplain}

\end{document}